\renewcommand{\leq}{\leqslant}
\renewcommand{\geq}{\geqslant}
\DeclareMathOperator{\id}{id}
\DeclareMathOperator{\Soc}{Soc}
\DeclareMathOperator{\Sym}{Sym}
\DeclareMathOperator{\Sq}{Sq}
\DeclareMathOperator{\ret}{ret}
\DeclareMathOperator{\pr}{pr}
\DeclareMathOperator{\mpl}{mpl}
\DeclareMathOperator{\sca}{sc}
\newcommand{\Z}{\mathbb{Z}}
\newcommand{\F}{\mathbb{F}}
\newcommand{\C}{\mathbb{C}}
\newcommand{\Aut}{\operatorname{Aut}}
\newcommand{\Hol}{\operatorname{Hol}}
\newcommand{\End}{\mathrm{End}}
\newcommand{\Ind}{\mathrm{Ind}}
\newcommand{\Orb}{\mathcal{O}}
\newcommand{\G}{\mathcal{G}}
\newcommand{\genrel}[2]{\left\langle \ #1 \ \vline \ #2 \ \right\rangle}
\numberwithin{equation}{section}
\numberwithin{figure}{section}
\numberwithin{table}{section}
\newtheorem{thm}{Theorem}[section]
\newtheorem*{thm*}{Theorem}
\newtheorem{lem}[thm]{Lemma}
\newtheorem{cor}[thm]{Corollary}
\newtheorem{pro}[thm]{Proposition}
\theoremstyle{definition}
\newtheorem{defn}[thm]{Definition}
\newtheorem*{defn*}{Definition}
\newtheorem*{convention*}{Convention}
\newtheorem{rem}[thm]{Remark}
\title{Coprime extensions of indecomposable solutions to the Yang--Baxter equation}
\author{Carsten Dietzel}
\date{\today}
\address[Carsten Dietzel]{Laboratoire de Mathématiques Nicolas Oresme, Université de Caen Normandie, Campus 2, 14000 Caen, France}
\email{carsten.dietzel@unicaen.fr}
\begin{document}

\begin{abstract}
    In this article, we introduce a method to extend involutive nondegenerate set-theoretic solutions to the Yang--Baxter equation by means of equivariant mappings to graded modules, thus leading to the notion of a \emph{twisted extension}. Furthermore, we define coprime extensions of solutions and prove that each coprime extension of indecomposable solutions can be obtained as a suitable twisted extension. We then apply our results to obtain a full description of indecomposable solutions of size $pqr$, where $p,q,r$ are different primes, from a structure theorem of Cedó and Okniński. We close with some remarks on a cohomology theory for solutions developed by Lebed and Vendramin.

    We express our results in the language of cycle sets.
\end{abstract}

\maketitle

\section*{Introduction}

Let $V$ be a finite-dimensional vector space over $\C$ resp. $\C(q)$, the field of rational functions over $\C$. Given a linear automorphism $R: V^{\otimes 2} \to V^{\otimes 2}$, we say that $R$ satisfies the (\emph{linear}) \emph{Yang--Baxter equation}, if the following identity is satisfied in $\End(V^{\otimes 3})$:
\begin{equation} \label{eq:LYBE}
    R_{12}R_{23}R_{12} = R_{23}R_{12}R_{23} \tag{YBE1}
\end{equation}
where $R_{12}(u \otimes v \otimes w) = R(u \otimes v) \otimes w$ and $R_{23}(u \otimes v \otimes w) = u \otimes R(v \otimes w)$.

\cref{eq:LYBE} has first been studied by the physicists Baxter \cite{Baxter_YB} and Yang \cite{Yang_YB} in the context of statistical physics and integrable systems. Later, Turaev discovered that solutions to the (linear) Yang--Baxter equation give rise to link invariants \cite{Turaev_links}, such as the Jones polynomial, therefore providing a framework for both the unification of known link invariants and the construction of novel ones. Therefore, an understanding of the Yang--Baxter equation and its solutions sheds a new light on the theory of knots.

However, solutions to \eqref{eq:LYBE} are, in general, hard to obtain as their construction requires sophisticated tools, such as the representation theory of quantum groups. Furthermore, Drinfeld \cite{Drinfeld_Problems} noted that most (classical) linear solutions are essentially deformations of the \emph{flip} solution $R(v \otimes w) = w \otimes v$. This motivated him to ask for a systematic study of \emph{set-theoretic solutions} to the Yang--Baxter equation which correspond to linear solutions that permute a basis of $V^{\otimes 2}$, with the goal of obtaining novel families of solutions by deformation.

Let $X$ be a finite set and $r: X \times X \to X \times X$ a bijection. The pair $(X,r)$ is then said to be a \emph{set-theoretic solution to the Yang--Baxter equation} if the following functional equation for maps $X^3 \to X^3$:
\begin{equation} \label{eq:YBE}
    r_{12}r_{23}r_{12} = r_{23}r_{12}r_{23} \tag{YBE2}
\end{equation}
where $r_{ij}: X^3 \to X^3$ is the mapping that acts as $r$ on the coordinates $i,j$ and as identity on the others.

It has quickly been discovered by Etingof, Schedler and Soloviev \cite{ESS_YangBaxter} and Gateva-Ivanova and van den Bergh \cite{GIVdB_IType} that set-theoretic solutions can be extraordinarily well be analyzed with group-theoretic tools. This observation established the research on set-theoretic solutions as a field on its own that is, as of today, vivid and flourishing. Quickly, new tools, such as cycle sets \cite{Rump_Decomposition} and braces \cite{Rump_braces} have been introduced into this field and connections have been drawn to other fields of algebra, such as Garside theory \cite{Chouraqui_Garside}.

The algebraic approach to set-theoretic solutions is most efficient after imposing some restrictions on $r$. Writing $r(x,y) = (\lambda_x(y),\rho_y(x))$, we call a set-theoretic solution to the Yang--Baxter equation \emph{nondegenerate} if the maps $\lambda_x,\rho_x$ are bijective for all $x \in X$. In this work, solutions will always be non-degenerate, and we will furthermore assume that $X$ is \emph{involutive}, meaning that $r^2 = \id_{X \times X}$. These assumptions are necessary for an approach by means of nondegenerate cycle sets and left braces (of abelian type). However, it is notable that an algebraic theory can also be set up for solutions where $r$ is not necessarily involutive \cite{LYZ_YangBaxter,Guarnieri_Vendramin} and for solutions where $r$ is not necessarily bijective \cite{left_non_degenerate}.

From now on, we will always assume that $X$ is involutive and nondegenerate.

By a result of Rump \cite{Rump_Decomposition}, set-theoretic solutions to the Yang--Baxter equation are in bijective correspondence with simple algebraic structures, the \emph{nondegenerate cycle sets} (or simply: \emph{cycle sets}):

\begin{defn*}
    A \emph{cycle set} is a set $X$ with a binary operation $X \times X \to X$; $(x,y) \mapsto x \ast y$ such that the following three axioms are satisfied
    \begin{align}
        (x \ast y) \ast (x \ast z) & = (y \ast x) \ast (y \ast z) \quad (\forall x,y,z \in X), \tag{C1} \label{eq:cycloid_equation} \\
        \sigma_x: X \to X ; & \quad  y \mapsto x \ast y \quad \textnormal{ is bijective for all } x \in X \tag{C2}
    \end{align}
    If furthermore,
    \begin{align}
        \textnormal{the \emph{square map}} \quad \Sq: X \to X ; & \quad x \mapsto x \ast x \quad \textnormal{ is bijective,} \tag{C3}
    \end{align}
    a cycle set $X$ is \emph{nondegenerate}.
\end{defn*}

\begin{convention*}
    Throughout this article, we will implicitly assume that a cycle set is always nondegenerate and finite, unless we explicitly say otherwise. As usual, we will most of the time refer to a cycle set $(X,\ast)$ by its underlying set $X$.
\end{convention*}

With each set-theoretic solution $r(x,y) = (\lambda_x(y),\rho_y(x))$, one can associate a cycle set structure on $X$ by putting $x \ast y = \lambda_x^{-1}(y)$. On the other hand, for each cycle set $X$, there is a unique set-theoretic solution on $X$ that satisfies $r(x \ast y, x) = (y \ast x,y)$. By these assignments, cycle sets and solutions are in bijective correspondence.

A central problem in the theory of set-theoretic solutions is the classification of solutions. One way to approach this problem is over the classification of cycle sets by size. However, a full classification by size has only been obtained for cycle sets of size $\leq 10$ by Akgün, Mereb and Vendramin \cite{AMV_Cyclesets}, by means of a novel computational approach. The results of their computations suggest that the number of cycle sets of size $n$ grows so fast in $n$ that a full classification of cycle sets of slightly larger sizes is likely out of reach. Therefore, it is reasonable to restrict the issue of classification to a smaller class of cycle sets, the \emph{indecomposable} cycle sets, that is, cycle sets $X$ with \emph{no} nontrivial partition $X = X_1 \sqcup X_2$ such that $X_i \ast X_i = X_i$ ($i=1,2$). Translated to set-theoretic solutions, this means that $r(X_i \times X_j) = X_j \times X_i$ ($i,j \in \{1,2 \}$) for a partition $X = X_1 \sqcup X_2$ implies $X_1 = \emptyset$ or $X_2 = \emptyset$.

The restriction to indecomposable cycle sets is reasonable insofar as they can be considered as building blocks for general general cycle sets, in that they can be recursively assembled by means of a matching process to finally obtain all cycle sets. In fact, in the indecomposable case, there are several classification results by size:

\begin{enumerate}
    \item Etingof, Schedler and Soloviev classified indecomposable cycle sets of size $p$, where $p$ is a prime \cite{ESS_YangBaxter}.
    \item The results of Akgün, Mereb and Vendramin encompass the classification of indecomposable cycle sets of size $\leq 10$ \cite{AMV_Cyclesets}.
    \item Using a theorem of Cedó and Okniński on cycle sets of squarefree size \cite{CO_SquarefreeIndecomposable}, the classification of indecomposable cycle sets of size $pq$, where $p,q$ are different primes, follows from a result of Jedli\v{c}ka and Pilitowska \cite{jedlicka_pilitowska}.
    \item In collaboration with Properzi and Trappeniers, the author of this article has recently obtained a classification of indecomposable cycle sets of size $p^2$, where $p$ is a prime \cite{drz_psquare}.
\end{enumerate}

The results of Cedó and Okniński are the motivation for this article. Defining the \emph{retraction} of a cycle set as the quotient $X_{\ret} = X/\sim$ where $\sim$ is the equivalence relation given by $x \sim y \Leftrightarrow \sigma_x = \sigma_y$, one can define a cycle set structure on $X_{\ret}$ by putting $[x] \ast [y] = [x \ast y]$. Under this structure, the canonical quotient map $\ret: X \to X_{\ret}$ becomes a homomorphism of cycle sets. In \cite{CO_SquarefreeIndecomposable}, Cedó and Okniński prove that if $X$ is a cycle set of squarefree size $> 1$, then $|X_{\ret}| < |X|$ and $X_{\ret}$ is again a cycle set of squarefree size. Therefore, $X$ is an \emph{extension} of $X_{\ret}$ which opens up the possibility to parametrize $X$ in terms of dynamical cocycles over its retraction $X_{\ret}$, as has been discovered by Vendramin \cite{Vendramin_extension}. Lebed and Vendramin \cite{Lebed_Vendramin_Homology} refined this approach and showed that certain extensions can be parametrized by $2$-cocycles with values in an abelian group.

The Cedó-Okniński results imply that indecomposable cycle sets of squarefree size can be obtained by a recursive extension process from smaller ones of their kind. However, direct computation of dynamical cocycles in order to extend solutions is difficult and messy. In order to circumvent this problem, we develop in this article a method to parametrize a certain class of extensions that we call \emph{coprime}. It will be shown that all coprime extensions of an indecomposable cycle set can be obtained as \emph{twisted extensions} by maps into graded modules.
This method enables us to recover an indecomposable cycle set $X$ from $X_{\ret}$, whenever the size of the \emph{permutation group} $\G(X_{\ret})$ is coprime to the size of the fibers of $X \twoheadrightarrow X_{\ret}$ (\cref{sec:coprime_extensions}). We then develop a method to check if a twisted extension is indecomposable and describe the permutation group whenever the resulting extension is coprime (\cref{sec:permutation_groups}). We then apply this method to give a full description of indecomposable cycle sets of size $pqr$ in terms of multiplicative characters whenever $p,q,r$ are different primes (\cref{sec:size_pqr}). We close with a brief discussion of Lebed-Vendramin cohomology.

Due to the highly nontrivial descriptions obtained for the cycle sets of size $pqr$ and multipermutation level $3$ (see \cref{thm:pqr_with_uniconnected_retraction} and \cref{thm:pqr_with_non_uniconnected_retraction}), we refrain from an explicit description of the corresponding set-theoretic solutions.

\section{Preliminaries}

In this section, we collect basic results on cycle sets and braces.

Having already defined cycle sets in the introduction, we recall the exact definition of indecomposability.

\begin{defn}[{\cite[Definition 2.5.]{ESS_YangBaxter}, \cite{Rump_Decomposition}}]
    A cycle set $X$ is called \emph{indecomposable} if for each partition $X = X_1 \sqcup X_2$ into sub-cycle sets, we have either $X_1 = \emptyset$ or $X_2 = \emptyset$. 
\end{defn}

We will rely a lot on the following fundamental fact:

\begin{pro} \label{pro:size_of_image_divides_size_of_domain}
    Let $f: Y \twoheadrightarrow X$ be a surjective homomorphism of finite, indecomposable cycle sets. Then all fibers $f^{-1}(x)$ ($x \in X$) have the same size. In particular, $|X|$ divides $|Y|$.
\end{pro}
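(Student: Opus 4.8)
The plan is to exploit the permutation maps of $Y$ to compare the fibers of $f$, and then to convert indecomposability of the base into transitivity of the permutation group $\G(X) = \langle \sigma_x : x \in X \rangle$. First I would record the compatibility coming from $f$ being a homomorphism: the identity $f(a \ast b) = f(a) \ast f(b)$ reads $f \circ \sigma_a = \sigma_{f(a)} \circ f$, so for each $a \in Y$ the bijection $\sigma_a \colon Y \to Y$ carries the fiber $f^{-1}(x)$ into the fiber $f^{-1}(\sigma_{f(a)}(x))$. A counting argument then upgrades this containment to a bijection: the images $\sigma_a(f^{-1}(x))$, $x \in X$, partition $Y$ (as $\sigma_a$ is bijective) and sit inside the fibers $f^{-1}(\sigma_{f(a)}(x))$, which also partition $Y$ since $\sigma_{f(a)}$ permutes $X$; comparing total cardinalities forces each containment to be an equality. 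Hence $\sigma_a$ restricts to a bijection $f^{-1}(x) \to f^{-1}(\sigma_{f(a)}(x))$, and in particular $|f^{-1}(x)| = |f^{-1}(\sigma_{f(a)}(x))|$ for all $x \in X$ and all $a \in Y$.

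Next I would package this as the invariance of the fiber-size function $F \colon X \to \N$, $F(x) = |f^{-1}(x)|$. Since $f$ is surjective, $f(a)$ runs over all of $X$ as $a$ runs over $Y$, so $F$ is invariant under every generator $\sigma_c$ ($c \in X$) of $\G(X)$, hence under $\G(X)$ itself; that is, $F$ is constant on each $\G(X)$-orbit. To conclude I would use that the orbits of $\G(X)$ are sub-cycle sets of $X$: each orbit $O$ satisfies $O \ast O = \bigcup_{a \in O} \sigma_a(O) = O$, because every element of $\G(X)$ preserves its orbits. Consequently a nontrivial orbit partition would decompose $X$, so indecomposability forces $\G(X)$ to act transitively, $F$ to be constant, say $F \equiv m$, whence $|Y| = \sum_{x \in X} F(x) = m\,|X|$ and $|X| \mid |Y|$.

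I expect the one genuinely structural step — as opposed to finite bookkeeping — to be the passage from indecomposability to transitivity of $\G(X)$, which rests on the observation that $\G(X)$-orbits are closed under $\ast$ and hence form sub-cycle sets, so that a proper orbit would contradict indecomposability. I also note that only indecomposability of the base $X$ is actually needed; indecomposability of $Y$ would suffice as well, since any decomposition of $X$ pulls back along $f$ to a decomposition of $Y$.
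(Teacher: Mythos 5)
Your proof is correct. The paper does not give its own argument here but simply cites Ced\'o--Okni\'nski \cite[Lemma 3.3]{CO_SquarefreeIndecomposable}, and your reasoning is essentially the standard one behind that lemma: the intertwining relation $f\circ\sigma_a=\sigma_{f(a)}\circ f$ shows each $\sigma_a$ carries fibers bijectively onto fibers (your counting step closing the containment is sound), indecomposability of $X$ gives transitivity of $\G(X)$ via the orbit-decomposition observation, and hence the fiber-size function is constant. Your closing remark that indecomposability of either $X$ or $Y$ alone suffices is also correct, since a nontrivial decomposition of $X$ pulls back along $f$ to one of $Y$.
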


\begin{proof}
    See \cite[Lemma 3.3]{CO_SquarefreeIndecomposable}.
\end{proof}

A remarkable property of finite cycle sets is that a significant proportion of them has repeated permutations. This gives rise to the notion of the \emph{retraction} of cycle sets:

\begin{defn}[{\cite[§3.2.]{ESS_YangBaxter}, \cite{Rump_Decomposition}}]
    Let $X$ be a cycle set. Then the \emph{retraction} of $X$ is the quotient $X_{\ret} = X/\sim$, where $\sim$ is the equivalence relation given by $x \sim y \Leftrightarrow \sigma_x = \sigma_y$, together with the cycle set operation $[x] \cdot [y] = [x \cdot y]$.
\end{defn}
That $X_{\ret}$ indeed is a cycle set under the given operation is proven in \cite[Lemma 2]{Rump_Decomposition}.

A cycle set can often be retracted more than once which motivates the notion of \emph{multipermutation level} \cite[Definition 3.1.]{ESS_YangBaxter}: define recursively $X^{(0)} = X$, and for $n \geq 0$, put $X^{(n+1)} = X^{(n)}_{\ret}$. Then the \emph{multipermutation level} of $X$ is defined as
\[
\mpl(X) = \begin{cases}
    \min \{n \geq 0 : |X^{(n)}| = 1 \} & \exists n \geq 0: |X^{(n)}| = 1, \\
    \infty & \textnormal{else}.
\end{cases}
\]
In this article, we will most of the time denote $X_{\ret}$ as $X^{(1)}$.

An important invariant of a cycle set is its \emph{permutation group} which is the subgroup
\[
\G(X) = \genrel{\sigma_x}{x \in X} \leq \Sym_X
\]
where, as in the axioms, $\sigma_x(y) = x \ast y$. Note that $X$ is indecomposable if and only if $\G(X)$ acts transitively on $X$.

It turns out that $\G(X)$ can be enriched by another group structure that turns $\G(X)$ into what is called a left brace of abelian type. As we will nowhere delve into the very rich theory of skew braces, we restrict the definition of a \emph{brace} to this special case. In this article, we use the definition of Cedó, Jespers and Okniński but remark that braces of abelian type go back to the work of Rump \cite{Rump_braces}. In the following, we collect some basic results on braces, most of which can be found in \cite{CJO_Braces}. If not, we explicitly state the reference.

\begin{defn}
    A \emph{brace} is a triple $B = (B,+,\circ)$ such that $B^+ = (B,+)$ is an abelian group, the \emph{additive group} of $B$, $B^{\circ} = (B,\circ)$ is a group - the \emph{multiplicative group} of $B$ - and the identity
    \[
    a \circ (b + c) = a \circ b - a + a \circ c
    \]
    is satisfied for all $a,b,c \in B$.
\end{defn}

Homomorphisms of braces are defined as homomorphisms of both the additive and the multiplicative group structures. Everything that comes with the concept of homomorphism (isomorphism,\ldots) is defined in the obvious way.

In general, we will suppress the $\circ$ and abbreviate $a \circ b = ab$. Note that it can be shown that $B^+$ and $B^{\circ}$ share the same identity which we denote as $0$.

Let us now discuss the algebraic structure of braces needed for this work! First of all, define the \emph{$\lambda$-action} of a brace by $\lambda_a(b) = ab - a$ ($a,b \in B$). As it satisfies the equations
\[
\lambda_a(b+c) = \lambda_a(b) + \lambda_a(c); \quad \lambda_{ab} = \lambda_a \circ \lambda_b,
\]
the map
\begin{align*}
    \lambda: (B, \circ) & \to \Aut(B^+) \\
    a & \mapsto \lambda_a
\end{align*}
is a well-defined homomorphism of groups. In particular, $B^+$ becomes a $B^{\circ}$-module.

A brace can be seen as a \emph{ring-like} structure, accordingly there is a notion of (left) ideal:

\begin{defn}
    Let $B$ be a brace. A \emph{left ideal} of $B$ is a subset $I \subseteq B$ such that $I$ is a subgroup of $B^+$ that is invariant under the $\lambda$-action, in that $\lambda_a(I) = I$ for all $a \in I$.

    If furthermore, $I$ is a normal subgroup of $B^{\circ}$, then $I$ is called an \emph{ideal} of $B$.
\end{defn}
Note that each left ideal $I \subseteq B$ is also a multiplicative subgroup of $B$. As in ring theory, the quotient
\[
B/I = \{ a + I : a \in B  \} =  \{ a \circ I : a \in B  \}
\]
can be formed and if $I$ is an ideal, $B/I$ is again a brace under the operations inherited by $B$.

A distinguished ideal of a brace $B$, that will play an important role throughout this work is the \emph{socle}
\[
\Soc(B) = \{ a \in B: \forall b \in B: ab = a+b \} = \ker(\lambda).
\]

We quickly reprove a well-known result about the socle that will be useful later:

\begin{pro} \label{pro:lambda_on_socle_is_conjugation}
    Let $B$ be a a brace. Then for $a \in B$, and $b \in \Soc(B)$, we have
    \[
    \lambda_a(b) = {}^ab.
    \]
\end{pro}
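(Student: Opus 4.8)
The plan is to rewrite the conjugate ${}^a b = a \circ b \circ a^{-1}$ using the socle hypothesis and the defining distributive law of the brace. First I would record the immediate consequence of the definition of the $\lambda$-action, namely that $x \circ y = x + \lambda_x(y)$ for all $x,y \in B$ (this is just the formula $\lambda_x(y) = xy - x$ rearranged). Since $b \in \Soc(B) = \ker(\lambda)$, we have $\lambda_b = \id$, and hence $b \circ c = b + c$ for every $c \in B$; in particular $b \circ a^{-1} = b + a^{-1}$.

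Next I would substitute this into the conjugate. Writing ${}^a b = a \circ (b \circ a^{-1}) = a \circ (b + a^{-1})$ and applying the brace identity $a \circ (u + v) = a \circ u - a + a \circ v$ with $u = b$ and $v = a^{-1}$, I obtain
\[
{}^a b = a \circ b - a + a \circ a^{-1}.
\]
Since $B^+$ and $B^\circ$ share the identity $0$, we have $a \circ a^{-1} = 0$, so the right-hand side collapses to $a \circ b - a = ab - a = \lambda_a(b)$, which is exactly the assertion.

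There is essentially no obstacle here; the only point requiring care is the order in which the product is resolved. The socle hypothesis $\lambda_b = \id$ only simplifies products in which the socle element stands on the \emph{left}, so it is important to first treat the inner product $b \circ a^{-1}$ (converting it into the sum $b + a^{-1}$) and only afterwards expand $a \circ (b + a^{-1})$ with the distributive law, rather than trying to simplify $a \circ b$ directly. Once this is observed, the result follows from a two-line application of the brace axioms together with $a \circ a^{-1} = 0$ and the commutativity of $+$.
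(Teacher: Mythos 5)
Your proof is correct, but it takes a genuinely different route from the paper's. The paper starts from the identity $a\circ b = ({}^ab)\circ a$ and expands both sides via $x\circ y = x+\lambda_x(y)$; to make this work it must first invoke the normality of $\Soc(B)$ in $B^{\circ}$ (i.e.\ that the socle is an ideal) in order to know that ${}^ab\in\Soc(B)$ and hence that $\lambda_{{}^ab}(a)=a$. You instead expand ${}^ab = a\circ(b\circ a^{-1})$, use the socle hypothesis only on $b$ itself to get $b\circ a^{-1}=b+a^{-1}$, and then apply the defining distributive law of a brace together with $a\circ a^{-1}=0$. The payoff of your version is that it is more self-contained: it needs nothing beyond $\Soc(B)=\ker(\lambda)$ and the brace axiom, and in fact, combined with $\lambda_{ab}=\lambda_a\lambda_b$, it yields the conjugation-invariance of the socle as a consequence rather than a prerequisite. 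The paper's version is equally short once one grants that $\Soc(B)$ is an ideal, which is standard background it cites elsewhere. Your cautionary remark about resolving the inner product $b\circ a^{-1}$ first is exactly the right point of care.
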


\begin{proof}
    Let $a, b$ be as in the statement. Then normality of $\Soc(B)$ implies that ${}^ab \in \Soc(B)$ either. Therefore,
    \[
    ab = {}^aba \Rightarrow a + \lambda_a(b) = {}^ab + \underbrace{\lambda_{{}^ab}(a)}_{=a} \Rightarrow a + \lambda_a(b) = {}^ab + a \Rightarrow \lambda_a(b) = {}^ab.
    \]
\end{proof}

\begin{convention*}
    For an integer $n$, we denote by $\pi(n)$ the set of primes dividing $n$.
\end{convention*}

An important kind of left ideal that we will consider a lot in this article is the following:

\begin{defn}
    Let $B$ be a finite brace and $\pi$ a set of primes. Then the \emph{$\pi$-primary component} of $B$ is defined as
    \[
    B_{\pi} = \{ a \in B: \pi(o_+(a)) \subseteq \pi \},
    \]
    the (additive) Hall-$\pi$-subgroup of $B_{\pi}$, where $o_+(a)$ denotes the order of $a$ in $B^+$.
\end{defn}
Note that $B_{\pi}$ is a characteristic subgroup in $B^+$ and thus, a left ideal in $B$.

We will furthermore rely on the notion of a \emph{semidirect product} of braces. As it is sufficient for our needs, we will restrict to \emph{external} semidirect products.

\begin{defn}
Let $B$ be a brace and let $B_1,B_2 \leq B$ be left ideals where $B_1$ is an ideal. If the additive group of $B^+$ decomposes as a direct sum $B^+ = B_1 \oplus B_2$, we call $B$ a (n \emph{external}) \emph{semidirect product} of $B_1,B_2$. In such a case, we write $B = B_1 \rtimes B_2$ (or $B_2 \ltimes B_1$, depending on the factor that we want to put the emphasis on.
\end{defn}

The relevance of braces for the theory of cycle sets lies in the fact that the permutation group $\G(X)$ of a cycle set $X$ carries a unique additive group structure $\G(X)$ such that $(\G(X),+,\circ)$ is a brace: the multiplication $\circ$ is the usual composition of permutations and, defining the element $\lambda_x = \sigma_x^{-1} \in \G(X)$ for $x \in X$, there is a unique brace structure on $\G(X)$ with
\[
\lambda_x + \lambda_y = \lambda_x \circ \lambda_{\lambda_x^{-1}(y)}
\]
for all $x,y \in X$ or, equivalently, $\sigma_x^{-1}(y) = \lambda_x(y)$, the $\lambda$-operation of the brace.

Note that, under this brace structure, $\lambda_{x \ast y} = \lambda_{\lambda_x}^{-1}(\lambda_y)$ is satisfied for $x,y \in X$.

\begin{convention*}
    $\Sym_X$ and all elements therein act from the left. For an element $g \in \G(X) \leq \Sym_X$, the notation $\lambda_g$ has a threefold meaning: it refers to the action of $g$ on $X$, to the $\lambda$-action of $g$ on $\G(X)$ and, as $\G(X)$ is a permutation group, $\lambda_g$ ultimately refers to the element $g$ itself.
    There is no danger of confusion, though, as the $\lambda$-action on $\G(X)$ is compatible with the $\lambda$-action on $X$ and it will always be clear which set is acted upon by $\G(X)$. However, we will avoid to equate $g = \lambda_g$ when no actions on other elements are involved.

    Furthermore, for an element $g \in \G(X)$, we will write $\sigma_g = \lambda_g^{-1}$ whenever it is more convenient to do so.
\end{convention*}

It is important to realize that the construction of the permutation group of a cycle set is \emph{not} functorial without restriction. However, it becomes so after restricting to surjective homomorphisms:

\begin{thm}
    Let $\varphi: Y \twoheadrightarrow X$ be a surjective homomorphism of cycle sets. Then there is a unique brace homorphism $\G(\varphi): \G(Y) \to \G(X)$ such that $\G(\varphi)(\lambda_y) = \lambda_{\varphi(y)}$ for all $y \in \G(Y)$.
\end{thm}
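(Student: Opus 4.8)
The plan is to construct the map $\G(\varphi)$ explicitly on generators and verify it is a well-defined brace homomorphism. Recall that $\G(Y)$ is generated multiplicatively by the elements $\lambda_y$ ($y \in Y$), so uniqueness is immediate: any brace homomorphism sending $\lambda_y \mapsto \lambda_{\varphi(y)}$ is determined on a multiplicative generating set, hence everywhere. The substance of the statement is therefore \emph{existence}. First I would observe that since $\varphi$ is a surjective cycle set homomorphism, we have $\varphi(x \ast y) = \varphi(x) \ast \varphi(y)$, and consequently $\sigma_{\varphi(y)} \circ \varphi = \varphi \circ \sigma_y$ as maps $Y \to X$, i.e.\ $\varphi$ intertwines the actions of $\sigma_y$ on $Y$ and $\sigma_{\varphi(y)}$ on $X$. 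Passing to inverses, the same intertwining holds for $\lambda_y = \sigma_y^{-1}$ and $\lambda_{\varphi(y)}$.

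The key step is to promote this pointwise intertwining to a group homomorphism $\G(Y)^{\circ} \to \G(X)^{\circ}$. I would define, for a word $g = \lambda_{y_1}^{\pm 1} \circ \cdots \circ \lambda_{y_k}^{\pm 1} \in \G(Y)$, the candidate value $\G(\varphi)(g) = \lambda_{\varphi(y_1)}^{\pm 1} \circ \cdots \circ \lambda_{\varphi(y_k)}^{\pm 1}$. The main obstacle is \emph{well-definedness}: one must check that if such a word represents the identity in $\G(Y)$, then the corresponding word in the $\lambda_{\varphi(y_i)}$ represents the identity in $\G(X)$. I would dispatch this via the intertwining relation: using surjectivity of $\varphi$, the element $g \in \G(Y) \leq \Sym_Y$ and its candidate image $\bar g \in \G(X) \leq \Sym_X$ satisfy $\bar g \circ \varphi = \varphi \circ g$ on the nose, for every word representative, because this identity is preserved under composition and inversion once it holds on generators. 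Hence if $g = \id_Y$, then $\bar g \circ \varphi = \varphi = \id_X \circ \varphi$, and surjectivity of $\varphi$ forces $\bar g = \id_X$. This shows the assignment descends to a well-defined map $\G(\varphi): \G(Y) \to \G(X)$, which is manifestly a homomorphism of the multiplicative groups and satisfies $\G(\varphi)(\lambda_y) = \lambda_{\varphi(y)}$.

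It remains to verify that $\G(\varphi)$ respects the \emph{additive} structure. Here I would use the defining relation of the brace addition recorded in the excerpt, namely $\lambda_x + \lambda_y = \lambda_x \circ \lambda_{\lambda_x^{-1}(y)}$ for $x,y \in Y$. Applying $\G(\varphi)$ and using that it is already a multiplicative homomorphism together with the intertwining $\varphi(\lambda_x^{-1}(y)) = \lambda_{\varphi(x)}^{-1}(\varphi(y))$, one computes
\[
\G(\varphi)(\lambda_x + \lambda_y) = \lambda_{\varphi(x)} \circ \lambda_{\lambda_{\varphi(x)}^{-1}(\varphi(y))} = \lambda_{\varphi(x)} + \lambda_{\varphi(y)} = \G(\varphi)(\lambda_x) + \G(\varphi)(\lambda_y).
\]
Since the $\lambda_y$ additively generate $\G(Y)^+$ (every element of the additive group is a sum of such generators, as the $\lambda_y$ multiplicatively generate and the two group structures share $0$), additivity on generators extends to all of $\G(Y)^+$ by the brace distributivity axiom. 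This completes the verification that $\G(\varphi)$ is a brace homomorphism. I expect the well-definedness step to be the genuine technical heart, with everything else following formally from the generation properties and the two structural identities quoted above.
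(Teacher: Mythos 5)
Your uniqueness argument and the construction of $\G(\varphi)$ as a homomorphism of the multiplicative groups are correct: the intertwining relation $\bar g \circ \varphi = \varphi \circ g$ propagates from generators to arbitrary words under composition and inversion, and surjectivity of $\varphi$ then disposes of well-definedness. (For the record, the paper gives no proof of this statement --- it cites Rump--Kanrar, where the map is obtained by descending the canonical brace morphism between structure groups, whose additive groups are free abelian on $Y$ resp.\ $X$, so additivity is built in from the start; your route through the permutation groups is different but legitimate in principle.)

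The genuine gap is the last step. Knowing $\G(\varphi)(\lambda_x+\lambda_y)=\G(\varphi)(\lambda_x)+\G(\varphi)(\lambda_y)$ for all pairs of \emph{generators} does not formally yield additivity, even granted that the $\lambda_y$ additively generate $\G(Y)^+$: pairwise additivity of a set map on a generating set says nothing about, say, $f(\lambda_x+(\lambda_y+\lambda_z))$, and ``the brace distributivity axiom'' cannot bridge this (it concerns $\circ$ distributing over $+$, not the behaviour of $\G(\varphi)$). Your parenthetical reason why the $\lambda_y$ additively generate is also not right --- sharing the identity is irrelevant; the correct reason is that the additive subgroup they generate is $\lambda$-invariant, hence a left ideal, hence a multiplicative subgroup containing the multiplicative generators. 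The finish that actually works with your own ingredients is: (i) upgrade $\lambda_{\lambda_x}^{-1}(\lambda_y)=\lambda_{x\ast y}$ to $\lambda_a(\lambda_y)=\lambda_{\lambda_a(y)}$ for \emph{all} $a\in\G(Y)$ (two homomorphisms $\G(Y)^{\circ}\to\Sym_{\{\lambda_y:\,y\in Y\}}$ that agree on generators agree everywhere), and likewise in $\G(X)$; (ii) for arbitrary $a\in\G(Y)$ and $y\in Y$ compute, exactly as you did for generators but now invoking the full intertwining $\varphi\circ\lambda_a^{-1}=\lambda_{\G(\varphi)(a)}^{-1}\circ\varphi$ that you already established,
\[
\G(\varphi)(a+\lambda_y)=\G(\varphi)\bigl(a\circ\lambda_{\lambda_a^{-1}(y)}\bigr)=\G(\varphi)(a)\circ\lambda_{\lambda_{\G(\varphi)(a)}^{-1}(\varphi(y))}=\G(\varphi)(a)+\lambda_{\varphi(y)};
\]
(iii) observe that $\{b\in\G(Y)\,:\,\G(\varphi)(a+b)=\G(\varphi)(a)+\G(\varphi)(b)\text{ for all }a\}$ is an additive subgroup, which by (ii) contains every $\lambda_y$ and hence is all of $\G(Y)$. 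The point is that the generation argument closes only when one of the two summands is allowed to range over all of $\G(Y)$; with both restricted to generators it does not.
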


\begin{proof}
    See \cite[p.5]{rump_kanrar}.
\end{proof}

The following result will be fundamental for our investigations, and due to its ubiquity, we will not refer to it all the time:

\begin{thm} \label{thm:soc_is_ker_g_ret}
    Let $\ret: X \twoheadrightarrow X^{(1)}$ be the retraction morphism, then $\ker(\G(\ret)) = \Soc(\G(X))$. In particular, if $f: Y \twoheadrightarrow X$ is an epimorphism such that $\ret: Y \twoheadrightarrow Y_{\ret}$ factors through $f$, then $\ker(\G(f)) \subseteq \Soc(\G(Y))$.
\end{thm}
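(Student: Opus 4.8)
The plan is to prove the equality $\ker(\G(\ret)) = \Soc(\G(X))$ directly, and then obtain the ``in particular'' clause as a formal consequence. The computational backbone is the compatibility identity
\[
\lambda_g(\lambda_y) = \lambda_{g(y)} \qquad (g \in \G(X),\ y \in X),
\]
between the brace $\lambda$-action on $\G(X)^+$ (left) and the permutation action of $g$ on $X$ (so $g(y)$ is the image of $y$ under the permutation $g$). For a single generator this is exactly the recorded identity $\lambda_{x\ast y} = \lambda_{\lambda_x}^{-1}(\lambda_y)$, which reads $\lambda_{\sigma_x(y)} = \lambda_{\sigma_x}(\lambda_y)$ once one uses $\lambda_{\lambda_x}^{-1} = \lambda_{\sigma_x}$ and $x\ast y = \sigma_x(y)$. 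Since $\lambda\colon \G(X)^{\circ} \to \Aut(\G(X)^+)$ is a group homomorphism and the $\sigma_x$ generate $\G(X)$, I would extend the identity to all $g$ by a routine induction on word length in the $\sigma_x^{\pm 1}$.

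Next I would describe both sides explicitly. Writing $\Phi$ for $\G(\ret)$ regarded inside $\Sym_{X^{(1)}}$, the defining property $\G(\ret)(\lambda_x) = \lambda_{[x]}$ and the computation $\lambda_{[x]}([y]) = [\lambda_x(y)]$ give $\Phi(g)([y]) = [g(y)]$ for generators, and hence for all $g$ by the same induction. Therefore $g \in \ker(\G(\ret))$ if and only if $[g(y)] = [y]$ for every $y$, i.e.\ $\sigma_{g(y)} = \sigma_y$ for all $y \in X$. On the other hand $g \in \Soc(\G(X)) = \ker(\lambda)$ means $\lambda_g = \id$, and by the compatibility identity $\lambda_g(\lambda_y) = \lambda_y$ is equivalent to $\sigma_{g(y)} = \sigma_y$. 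This immediately yields $\Soc(\G(X)) \subseteq \ker(\G(\ret))$. The reverse inclusion requires knowing that an additive endomorphism fixing every $\lambda_y$ is the identity, i.e.\ that $\{\lambda_y : y \in X\}$ generates the abelian group $\G(X)^+$; this is the only substantial point and I expect it to be the main obstacle. I would settle it by proving that the additive subgroup $A = \langle \lambda_y : y \in X\rangle_+$ is closed under $\circ$ and under $\circ$-inverses: from $a\circ b = a + \lambda_a(b)$ and the fact (from the compatibility identity) that every $\lambda_a$ with $a \in \G(X)$ sends generators to generators and hence preserves $A$, one gets $a\circ b \in A$ and $a^{-1} = -\lambda_{a^{-1}}(a) \in A$ for $a,b \in A$. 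Thus $A$ contains the multiplicative group generated by the $\lambda_y$, which is all of $\G(X)$, forcing $A = \G(X)^+$. The two inclusions give $\ker(\G(\ret)) = \Soc(\G(X))$.

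Finally, for the ``in particular'' clause, suppose the retraction $\ret\colon Y \twoheadrightarrow Y_{\ret}$ factors as $\ret = \psi \circ f$ through the epimorphism $f\colon Y \twoheadrightarrow X$; then $\psi$ is surjective too, so $\G(\psi)$ is defined. By the uniqueness in the functoriality theorem, $\G(\psi)\circ\G(f)$ and $\G(\ret)$ agree on every $\lambda_y$ and hence coincide. Consequently $\ker(\G(f)) \subseteq \ker(\G(\psi)\circ \G(f)) = \ker(\G(\ret)) = \Soc(\G(Y))$, where the last equality is the case of the theorem already proved, applied to $Y$.
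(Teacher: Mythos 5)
Your argument is correct. Note, however, that the paper does not prove this statement at all: it simply cites \cite[Lemma 6.1]{CJO_Braces}, so there is no in-paper proof to compare against. Your proposal supplies a genuine self-contained argument, and it correctly isolates the one substantive point, namely that an additive automorphism of $\G(X)^+$ fixing every $\lambda_y$ must be the identity, i.e.\ that the $\lambda_y$ generate $\G(X)^+$ additively. Your closure argument for this (the additive subgroup $A=\langle\lambda_y\rangle_+$ is stable under every $\lambda_h$, hence closed under $\circ$ via $a\circ b=a+\lambda_a(b)$ and under inversion via $a^{-1}=-\lambda_{a^{-1}}(a)$, so it contains the multiplicatively generated group $\G(X)$) is sound and is the standard proof of this well-known fact about permutation braces; the remaining steps (the compatibility $\lambda_g(\lambda_y)=\lambda_{\lambda_g(y)}$ extended from generators by the homomorphism property, the identification of $\ker(\G(\ret))$ with $\{g:\sigma_{\lambda_g(y)}=\sigma_y\ \forall y\}$, and the deduction of the ``in particular'' clause from the uniqueness in the functoriality theorem) are all routine and correctly executed. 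The only caveat is bookkeeping: under the paper's conventions the element of $\G(X)$ attached to $x$ is $\lambda_x=\sigma_x^{-1}$, so one must be consistent about whether ``$g(y)$'' means the permutation $g$ applied to $y$ or the $\lambda$-action $\lambda_g(y)$; you flag this and use it consistently, so no harm is done.
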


\begin{proof}
    \cite[Lemma 6.1.]{CJO_Braces}
\end{proof}

\section{Construction of coprime extensions of cycle sets} \label{sec:coprime_extensions}

\begin{convention*}
    Given a group $G$, acting from the left on a set $X$ via the operation $(g,x) \mapsto g \cdot x$, we denote the orbit of an element $x \in X$ under the action of $G$ by
    \[
    \mathcal{O}_G(x) = \{ g \cdot x : g \in G \}.
    \]
\end{convention*}

\begin{defn}
    Let $X,Y$ be cycle sets where $X$ is indecomposable. We call a surjective homomorphism $f: Y \twoheadrightarrow X$ a \emph{constant extension} (\emph{extension}, for short) if for all $x,y \in Y$, we have the implication $f(x) = f(y) \Rightarrow \sigma_x = \sigma_y$. If $Y$ is also indecomposable, we call $f$ an \emph{extension of indecomposable cycle sets}.

    An extension of indecomposable cycle sets $f: Y \twoheadrightarrow X$ is called \emph{coprime} if $\gcd\left(|\G(X)|, \frac{|Y|}{|X|}\right) = 1$.
\end{defn}

Note that by \cref{pro:size_of_image_divides_size_of_domain}, $\frac{|Y|}{|X|} = |f^{-1}(x)|$ for all $x \in X$.

We will also need to talk about \emph{equivalence} of extensions:

\begin{defn}
    Let $f_i: Y_i \twoheadrightarrow X$ ($i =1,2$) be two extensions of a cycle set $X$. We call $f_1,f_2$ \emph{equivalent} if there is an isomorphism $\iota: Y_1 \to Y_2$ such that $f_1 = f_2 \circ \iota$.
\end{defn}

The following proposition follows directly from \cref{thm:soc_is_ker_g_ret}:

\begin{pro} \label{pro:kernel_of_extensions_is_in_socle}
    Let $f: Y \twoheadrightarrow X$ be an extension of cycle sets, then $\ker(\G(f)) \subseteq \Soc(\G(Y))$.
\end{pro}

Before giving the construction of general coprime extensions of indecomposable cycle sets, we need to introduce and recall some concepts:

Let $X$ be a set and $A$ an abelian group. It is easy to see that $A^X$, the group of tuples $a = (a_x)_{x \in X}$ under coordinatewise addition, acts on $X \times A$ by
\[
    a + (x,b) = (x,b + a_x).
\]
We want to give a description of this action that is equivariant with respect to an action of $G$ on $X$.

To this end, let $X$ be a set that is acted upon by a group $G$ by means of an operation denoted by $(g,a) \mapsto g \cdot x$ ($g \in G, x \in X$). Define an \emph{$X$-graded $G$-module} as an abelian group $A$ with a decomposition into components
\[
A = \bigoplus_{x \in X} A_x,
\]
and an action by automorphisms $\rho: G \to \Aut(A)$, also denoted by $g \cdot a = (\rho(g))(a)$, that respects the grading in the sense that
\[
g \cdot A_x \subseteq A_{g \cdot x} \quad (g \in G, x \in X).
\]
We write the elements of $A$ as tuples $(a_x)_{x \in X}$ with $a_x \in A_x$. Furthermore, given two $X$-graded $G$-modules $A,B$, we call an additive homomorphism $f: A \to B$ a \emph{homomorphism of $X$-graded $G$-modules} if $f$ is $G$-equivariant, i.e. $f(g \cdot a) = g \cdot f(a)$ for all $g \in G$, $a \in A$, and if $f$ respects the grading by $X$, meaning that $f(A_x) \subseteq B_x$ for all $x \in X$. All notions that come along with homomorphisms (isomorphism, \ldots) are defined in the obvious way.

Given an $X$-graded $G$-module $A$, we define the \emph{scattering} of $A$ as the set
\[
A^{\sca} = \bigsqcup_{x \in X} \{ x \} \times A_x
\]
which has a canonical $G$-action given by
\[
g \cdot (x,a) = (g \cdot x, g \cdot a).
\]

It can easily be shown that $A$ acts on $A^{\sca}$ in a $G$-equivariant way:

\begin{pro}
    Let $A$ be an $X$-graded $G$-module, then the binary operation
    \begin{align*}
        A \times A^{\sca} & \to A^{\sca} \\
        (a, (x,b)) & \mapsto a + (x,b) = (x,b+a_x),
    \end{align*}
    is an action of $A$ on $A^{\sca}$ that is $G$-equivariant in the sense that
    \begin{equation} \label{eq:action_on_a_sc}
    g \cdot (a + (x,b)) = g \cdot a + g \cdot (x,b)
    \end{equation}
    for $g \in A$, $a \in A$, $(x,b) \in A^{\sca}$.
\end{pro}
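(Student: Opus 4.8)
The plan is to verify the two assertions separately: first that the displayed formula defines an action of the abelian group $A$ on the set $A^{\sca}$, and then that this action is $G$-equivariant in the stated sense. Both reduce to bookkeeping, provided one keeps careful track of how the grading interacts with the two group actions; the only genuinely non-formal ingredient is a single identity relating $\rho(g)$ to the homogeneous components, which I isolate below.

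For the action axioms I would begin by checking well-definedness: if $(x,b) \in A^{\sca}$ then $b \in A_x$, and since the $x$-component $a_x$ also lies in $A_x$, the sum $b + a_x$ lies in $A_x$, so indeed $(x, b+a_x) \in A^{\sca}$. The neutral element $0 \in A$ has $x$-component $0 \in A_x$, whence $0 + (x,b) = (x,b)$. Compatibility with the group law is then a direct substitution, using commutativity of $A$ and the additivity of the component projection, $(a+a')_x = a_x + a'_x$:
\[
a + \bigl(a' + (x,b)\bigr) = a + (x,\, b+a'_x) = (x,\, b + a'_x + a_x) = (x,\, b + (a+a')_x) = (a+a') + (x,b).
\]
This establishes that $A$ acts on $A^{\sca}$.

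The equivariance is where the grading genuinely enters, and the crux is a component identity. Writing $a = \sum_{y \in X} a_y$ with $a_y \in A_y$, the grading-compatibility $\rho(g)(A_y) \subseteq A_{g \cdot y}$ gives $g \cdot a = \sum_{y} g \cdot a_y$ with $g \cdot a_y \in A_{g \cdot y}$; since $g$ acts as a bijection on $X$, this is exactly the homogeneous decomposition of $g \cdot a$, so reading off the component in $A_{g \cdot x}$ yields $(g \cdot a)_{g \cdot x} = g \cdot a_x$. Granting this, both sides of the target identity unwind to the same element of $A^{\sca}$. The left-hand side gives
\[
g \cdot \bigl(a + (x,b)\bigr) = g \cdot (x,\, b + a_x) = \bigl(g \cdot x,\ g \cdot b + g \cdot a_x\bigr),
\]
while the right-hand side gives
\[
g \cdot a + g \cdot (x,b) = g \cdot a + (g \cdot x,\, g \cdot b) = \bigl(g \cdot x,\ g \cdot b + (g \cdot a)_{g \cdot x}\bigr) = \bigl(g \cdot x,\ g \cdot b + g \cdot a_x\bigr),
\]
and these agree.

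The main (modest) obstacle is precisely the component identity $(g \cdot a)_{g \cdot x} = g \cdot a_x$; everything else is either a definitional check or a substitution. I would emphasize in the write-up that this identity rests on two hypotheses working together, namely that $\rho(g)$ respects the grading and that $G$ acts by bijections on $X$, the latter guaranteeing that the images $g \cdot a_y$ sit in pairwise distinct homogeneous pieces so that components can be read off unambiguously. (I also note in passing that the quantifier in the statement should read $g \in G$ rather than $g \in A$.)
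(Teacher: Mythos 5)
Your proof is correct and follows essentially the same route as the paper: the paper likewise dismisses the action axioms as an easy componentwise check and proves equivariance by the identical one-line computation $g\cdot(x,b+a_x) = (g\cdot x,\ g\cdot b + g\cdot a_x) = g\cdot a + g\cdot(x,b)$. The only difference is that you make explicit the component identity $(g\cdot a)_{g\cdot x} = g\cdot a_x$ (which the paper uses silently), and you are right that the statement's ``$g \in A$'' is a typo for ``$g \in G$''.
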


\begin{proof}
    As $A$ acts componentwisely on $A^{\sca}$ by regular actions of the respective factors $A_x$, it is easy to check this is indeed a group action. For \cref{eq:action_on_a_sc}, we calculate
    \[
    g \cdot (a + (x,b)) = g \cdot (x, b + a_x) = (g \cdot x, g \cdot b + g \cdot a_x) = g \cdot a + g \cdot (x, b).
    \]
\end{proof}

\begin{defn}
    Let $X$ be a cycle set and let $A$ be an $X$-graded group with respect to the action of $\G(X)$ on $X$. Let $\Phi: X \times X \to A$ be a map such that
    \begin{align}
        \Phi(x,y) & \in A_y \label{eq:phixy_in_ay} \\
        \Phi(\lambda_g(x), \lambda_g(y)) & = g \cdot \Phi(x,y). \label{eq:phi_equivariance}
    \end{align}
    for $x,y \in X$, $g \in \G(X)$. Then we define the \emph{twisted extension} $X \otimes_{\Phi}A$ as the set $A^{\sca}$ with the binary operation
    \begin{equation} \label{eq:cycle_operation_by_phi}
    (x,a) \ast (y,b) = \lambda_x^{-1} \cdot(y, b + \Phi(x,y)) = (x \ast y, \lambda_x^{-1} \cdot(b + \Phi(x,y))),
    \end{equation}
    together with the projection $\pr: X \otimes_{\Phi}A \twoheadrightarrow X$; $(x,a) \mapsto x$.
\end{defn}

\begin{pro}
    $\pr: X \otimes_{\Phi} A \twoheadrightarrow X$ defines an extension of cycle sets.
\end{pro}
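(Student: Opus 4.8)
The plan is to verify directly that the operation \eqref{eq:cycle_operation_by_phi} makes $A^{\sca}$ a nondegenerate cycle set and that $\pr$ is then a surjective homomorphism satisfying the constant-extension condition. The first thing to check is that the operation is well defined, i.e. that $(x,a)\ast(y,b)$ again lies in $A^{\sca}$: since $\Phi(x,y)\in A_y$ by \eqref{eq:phixy_in_ay} and $b\in A_y$, we have $b+\Phi(x,y)\in A_y$, and grading-compatibility of the $\G(X)$-action gives $\lambda_x^{-1}\cdot(b+\Phi(x,y))\in A_{\lambda_x^{-1}(y)}=A_{x\ast y}$; hence the second coordinate lies in the fibre over the first coordinate $x\ast y$, as required.

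I would dispatch the two ``easy'' axioms first. For (C2), note that $\sigma_{(x,a)}$ sends $(y,b)$ to $(x\ast y,\lambda_x^{-1}\cdot(b+\Phi(x,y)))$; on first coordinates this is the bijection $\sigma_x$ of $X$, while on the fibre over a fixed target it factors as the translation $b\mapsto b+\Phi(x,y)$ of $A_y$ followed by the bijection $\rho(\lambda_x^{-1})\colon A_y\to A_{x\ast y}$ (whose inverse is $\rho(\lambda_x)$), so $\sigma_{(x,a)}$ is a bijection of $A^{\sca}$. The same reasoning applied to the square map $(x,a)\mapsto(x\ast x,\lambda_x^{-1}\cdot(a+\Phi(x,x)))$, using that $x\mapsto x\ast x$ is bijective by (C3) for $X$, gives nondegeneracy. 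Moreover $\pr$ is visibly a surjective homomorphism, since the first coordinate of $(x,a)\ast(y,b)$ is $x\ast y=\pr(x,a)\ast\pr(y,b)$; and crucially the formula for $\sigma_{(x,a)}$ does not involve $a$, so any two elements in the same fibre of $\pr$ have equal $\sigma$-maps, which is exactly the constant-extension condition.

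The real content is the cycloid equation (C1). Expanding both sides of
\[
((x,a)\ast(y,b))\ast((x,a)\ast(z,c)) = ((y,b)\ast(x,a))\ast((y,b)\ast(z,c)),
\]
I first observe that the two first coordinates are $(x\ast y)\ast(x\ast z)$ and $(y\ast x)\ast(y\ast z)$, which agree by (C1) for $X$. For the second coordinates I would invoke the equivariance \eqref{eq:phi_equivariance} in the form $\Phi(x\ast y,x\ast z)=\lambda_x^{-1}\cdot\Phi(y,z)$ and $\Phi(y\ast x,y\ast z)=\lambda_y^{-1}\cdot\Phi(x,z)$ (taking $g=\lambda_x^{-1}$, resp.\ $g=\lambda_y^{-1}$, and recalling $\lambda_g(y)=g\cdot y$). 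Using additivity of the $\G(X)$-action and commutativity of $A$, the second coordinate of the left-hand side collapses to
\[
\lambda_{x\ast y}^{-1}\lambda_x^{-1}\cdot\big(c+\Phi(x,z)+\Phi(y,z)\big)
\]
and that of the right-hand side to
\[
\lambda_{y\ast x}^{-1}\lambda_y^{-1}\cdot\big(c+\Phi(y,z)+\Phi(x,z)\big).
\]
The arguments of the two actions coincide, so it remains to check that $\lambda_{x\ast y}^{-1}\lambda_x^{-1}=\lambda_{y\ast x}^{-1}\lambda_y^{-1}$ in $\G(X)$; writing $\lambda_x^{-1}=\sigma_x$, this is the equality of maps $\sigma_{\sigma_x(y)}\sigma_x=\sigma_{\sigma_y(x)}\sigma_y$, which is the cycloid equation for $X$ read off at each $z$.

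I expect the only delicate point to be the bookkeeping in (C1): one has to keep precise track of which graded component $A_x$ each element inhabits and to apply equivariance with exactly the right group element so that the two $\Phi$-twists are re-expressed through the \emph{same} action that already governs the base. The conceptual upshot is that condition \eqref{eq:phi_equivariance} is precisely what forces the twist terms to transform like the base, so that verifying (C1) for $X\otimes_{\Phi}A$ reduces cleanly to (C1) for $X$ together with the abelian-group axioms of $A$; no genuinely new identity is needed.
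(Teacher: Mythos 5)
Your proof is correct and follows essentially the same route as the paper's: the cycloid equation is verified by applying equivariance with $g=\lambda_x^{-1}$ to rewrite $\Phi(x\ast y,x\ast z)=\lambda_x^{-1}\cdot\Phi(y,z)$, collapsing both sides to the same argument acted on by $\lambda_{x\ast y}^{-1}\lambda_x^{-1}=\lambda_{y\ast x}^{-1}\lambda_y^{-1}$, which is (C1) for $X$ in terms of the $\lambda$-action. Your additional checks of well-definedness, (C2) and the constant-extension condition are correct and only make explicit what the paper leaves to the reader.
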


\begin{proof}
    First note that $\lambda_x \lambda_{x \ast y} = \lambda_y \lambda_{y \ast x}$, which is a mere reformulation of \cref{eq:cycloid_equation} in terms of the $\lambda$-action. Using that, we calculate
    \begin{align*}
        ((x,a) \ast (y,b)) \ast ((x,a) \ast (z,c)) & =  (x \ast y, \lambda_x^{-1} \cdot (b + \Phi(x,y))) \ast (x \ast z, \lambda_x^{-1} \cdot (c + \Phi(x,z))) \\
        & = ((x \ast y) \ast (x \ast z), \lambda_{x \ast y}^{-1} \cdot ( \lambda_x^{-1} \cdot (c + \Phi(x,z)) + \underbrace{\Phi(x \ast y, x \ast z)}_{= \lambda_x^{-1} \cdot \Phi(y,z)}) ) \\
        & = ((x \ast y) \ast (x \ast z), (\lambda_x \lambda_{x \ast y})^{-1} \cdot (c + \Phi(x,z) + \Phi(y,z))) \\
        & = ((y \ast x) \ast (y \ast z), (\lambda_y \lambda_{y \ast x})^{-1} \cdot (c + \Phi(x,z) + \Phi(y,z)) ) \\
        & = ((y,b) \ast (a,x)) \ast ((y,b) \ast (z,c)).
    \end{align*}
    Furthermore, it is quickly seen from the non-degeneracy of $X$ that the square map $\Sq(x,a) = (\Sq(x), \lambda_x^{-1} \cdot (a + \Phi(x,x)))$ is bijective as well, so $X \otimes_{\Phi}A$ is indeed a non-degenerate cycle set. Furthermore, it is clear that $\pr$ is a homomorphism of cycle sets. As moreover, $(x,a) \ast (y,b) = \lambda_x^{-1} \cdot(y, b + \Phi(x,y))$ is independent of $a$, the map $\pr$ is indeed an extension of cycle sets.
\end{proof}

Note that different maps $\Phi$ can define isomorphic twisted extensions. This will be needed later.

\begin{pro} \label{pro:isomorphisms_of_A_give_isomorphic_extensions}
    Let $\pr: X \otimes_{\Phi} A \twoheadrightarrow X$ be a twisted extension and let $f: A \to A$ be an isomorphism of $X$-graded $G$-modules. Then, with $\Phi' = f \circ \Phi$, the map $\pr': X \otimes_{\Phi'}A \twoheadrightarrow X$ is also a twisted extension, and the map
    \[
    g: X \otimes_{\Phi}A \to X \otimes_{\Phi'}A ; \quad (x,a) \mapsto (x,f(a))
    \]
    is an equivalence between $\pr$ and $\pr'$.
\end{pro}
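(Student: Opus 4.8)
The plan is to verify three things in turn: that $\Phi' = f \circ \Phi$ again satisfies the two defining conditions of a twisting map, that $g$ is an isomorphism of cycle sets, and that $g$ is compatible with the two projections onto $X$.

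First I would check that $\Phi'$ defines a twisted extension. Since $f$ respects the grading, $\Phi(x,y) \in A_y$ forces $\Phi'(x,y) = f(\Phi(x,y)) \in A_y$, so \cref{eq:phixy_in_ay} holds. For the equivariance condition \cref{eq:phi_equivariance}, I would combine the equivariance of $\Phi$ with that of $f$:
\[
\Phi'(\lambda_g(x),\lambda_g(y)) = f(g \cdot \Phi(x,y)) = g \cdot f(\Phi(x,y)) = g \cdot \Phi'(x,y).
\]
Hence $X \otimes_{\Phi'}A$ is a legitimate twisted extension and $\pr'$ makes sense.

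Next, because $f$ preserves the grading, $a \in A_x$ implies $f(a) \in A_x$, so $g$ is a well-defined self-map of the underlying set $A^{\sca}$; as $f$ is bijective, $g$ admits the inverse $(x,a) \mapsto (x, f^{-1}(a))$ and is itself a bijection. The crux is to verify that $g$ intertwines the two cycle set operations. Expanding $g((x,a) \ast (y,b))$ via \cref{eq:cycle_operation_by_phi} and then pushing $f$ through the $\lambda_x^{-1}$-action is exactly the step where the $\G(X)$-equivariance (and additivity) of $f$ is needed:
\[
g((x,a) \ast (y,b)) = (x \ast y,\, f(\lambda_x^{-1} \cdot (b + \Phi(x,y)))) = (x \ast y,\, \lambda_x^{-1} \cdot (f(b) + \Phi'(x,y))),
\]
and the right-hand side is precisely $g(x,a) \ast g(y,b)$ computed inside $X \otimes_{\Phi'}A$. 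A bijective homomorphism of cycle sets is an isomorphism, so $g$ is an isomorphism.

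Finally, the triangle identity $\pr = \pr' \circ g$ is immediate, since both $\pr$ and $\pr' \circ g$ return the first coordinate $x$ of $(x,a)$ and $g$ preserves first coordinates. Thus $g$ is the desired equivalence of extensions. I do not anticipate a genuine obstacle: the whole argument is a direct unwinding of the definitions, and the only point requiring attention is the single equivariance step commuting $f$ past $\lambda_x^{-1}$, which is guaranteed by the hypothesis that $f$ is a morphism of $X$-graded $\G(X)$-modules.
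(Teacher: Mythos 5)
Your proposal is correct and follows exactly the route the paper intends: the paper's own proof merely asserts that the two conditions \cref{eq:phixy_in_ay} and \cref{eq:phi_equivariance} are ``quickly checked'' and that the equivalence ``follows from a short calculation,'' and your write-up supplies precisely those calculations, with the key step (pushing $f$ past the $\lambda_x^{-1}$-action using $G$-equivariance and additivity) correctly identified. Nothing further is needed.
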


\begin{proof}
    It is quickly checked that $\Phi'$ still satisfies \cref{eq:phixy_in_ay} and \cref{eq:phixy_in_ay}, so $\Phi'$ again defines a twisted extension. The equivalence property follows from a short calculation.
\end{proof}

\begin{cor}\label{pro:multiples_of_phi_give_isomorphic_extensions}
    Let $\pr: X \otimes_{\Phi}A \twoheadrightarrow X$ be a twisted extension and let $k$ be an integer such that $\gcd(k,|A|) = 1$, then the extension $\pr': X \otimes_{k \cdot \Phi} A \twoheadrightarrow X$ is equivalent to $\pr$.
\end{cor}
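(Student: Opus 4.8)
The plan is to exhibit $k\cdot\Phi$ as $f\circ\Phi$ for a single automorphism $f$ of the $X$-graded $\G(X)$-module $A$ and then invoke \cref{pro:isomorphisms_of_A_give_isomorphic_extensions} without any further work. The obvious candidate is the multiplication-by-$k$ map $f\colon A\to A$, $a\mapsto k\cdot a$, since then $(f\circ\Phi)(x,y)=k\cdot\Phi(x,y)$, so that $\Phi'=f\circ\Phi$ is precisely $k\cdot\Phi$. The whole statement will follow immediately once $f$ is shown to be an isomorphism of $X$-graded $G$-modules.

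First I would verify that $f$ is an automorphism of the abelian group $A$. Because $A$ is finite and $\gcd(k,|A|)=1$, the integer $k$ is invertible modulo the exponent of $A$; choosing $\ell$ with $k\ell\equiv 1$ modulo $\exp(A)$ yields a two-sided inverse $a\mapsto\ell\cdot a$, so $f$ is bijective, and additivity is clear. Next, $G$-equivariance is automatic: since $\G(X)$ acts on $A$ by additive automorphisms, one has $f(g\cdot a)=k\cdot(g\cdot a)=g\cdot(k\cdot a)=g\cdot f(a)$. Finally, $f$ respects the grading, because each homogeneous component $A_x$ is a subgroup and hence closed under multiplication by the integer $k$, giving $f(A_x)\subseteq A_x$. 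Thus $f$ is an isomorphism of $X$-graded $G$-modules.

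With this in hand, the corollary is a direct application of \cref{pro:isomorphisms_of_A_give_isomorphic_extensions} to $\Phi'=f\circ\Phi=k\cdot\Phi$. There is no genuine obstacle here: the only point that actually uses the hypothesis is the invertibility of $f$, which is exactly where the coprimality condition $\gcd(k,|A|)=1$ enters. Everything else — additivity, equivariance, and compatibility with the grading — holds for multiplication by an arbitrary integer, so no separate argument is needed.
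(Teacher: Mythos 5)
Your proposal is correct and follows exactly the paper's own argument: the paper's proof consists of the single observation that $a \mapsto k \cdot a$ is an isomorphism of $X$-graded $G$-modules under the coprimality hypothesis, then applies \cref{pro:isomorphisms_of_A_give_isomorphic_extensions}. You have simply spelled out the routine verifications (bijectivity via invertibility of $k$ modulo the exponent, equivariance, and preservation of the grading) that the paper leaves implicit.
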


\begin{proof}
    Under the given conditions, $A \to A$; $a \mapsto k \cdot a$ is an isomorphism of $X$-graded $G$-modules.
\end{proof}

In the special case when then $X$-graded $\G(X)$-module $A$ is a permutation module, i.e. there is an isomorphism $A \cong B^X$ for some abelian group $B$, where $(g \cdot f)(x) = f(\lambda_g^{-1}(x))$, the action on $A^{\sca} = X \times B$ simplifies to $g \cdot (x,a) = (\lambda_g(x),a)$, so we can obtain twisted extensions in a simpler way: in this case, they can be described by a map $\Gamma: X \times X \to B$ that satisfies the invariance property
\begin{equation} \label{eq:equivariance_for_parallel_extensions}
    \Gamma(\lambda_g(x),\lambda_g(y)) = \Gamma(x,y) \ \textnormal{for } x,y \in X,\ g \in \G(X).
\end{equation}

In this special case, \cref{eq:cycle_operation_by_phi} turns into the following operation on $X \times B$:
\begin{equation} \label{eq:cycle_operation_parallel_extension}
(x,a) \ast (y,b) = (x \ast y, b + \Gamma(x,y)).    
\end{equation}
We denote the resulting cycle set by $X \times_{\Gamma} B$ and call it a \emph{parallel extension} of $X$ by $B$ (via $\Gamma$).

Despite we try to make our approach as independent of a base point as possible, it will occasionally be useful to make use of the \emph{fundamental group} of a cycle set \cite{rump_uniconnected}:

\begin{defn}
    Let $X$ be an indecomposable cycle set and $x_0 \in X$. Then the \emph{fundamental group} of $X$ with \emph{base point} $x_0$ is the subgroup
    \[
    \Pi_1(X,x_0) = \{ g \in \G(X) : \lambda_g(x_0) = x_0 \} \leq \G(X).
    \]
\end{defn}

\begin{rem}
    The fundamental group $\Pi_1(X,x_0)$ also plays a role in Rump's covering theory \cite{Rump_coverings}, as its subgroups are in correspondence with \emph{strong coverings} i.e. surjective homomorphisms $f: (Y,y_0) \twoheadrightarrow (X,x_0)$  where $Y$ is an indecomposable cycle set with base point $y_0$ and $\G(f): \G(Y) \to \G(X)$ is an isomorphism. It can be shown that $\frac{|Y|}{|X|}$ divides $|\G(X)|$ in these cases, so no non-trivial strong covering is a coprime extension. Surprisingly, the fundamental group $\Pi_1(X,x_0)$ will nevertheless play a significant role in the description of coprime extensions.
\end{rem}

As we are mostly concerned with indecomposable cycle sets, it will sometimes be favorable to work with the following version of $\G(X)$-equivariance:

Let $X$ be indecomposable. Choosing a point $x_0 \in X$, we consider the map $\Phi^0(x) = \Phi(x_0,x)$. Then \cref{eq:phixy_in_ay} tells us that 
\begin{equation} \label{eq:phi0y_in_ay}
    \Phi^0(y) \in A_y \quad \textnormal{for all } y \in X.
\end{equation}

Putting a $g \in \Pi = \Pi_1(X,x_0)$ in \cref{eq:phi_equivariance}, equivariance furthermore specializes to
\begin{equation} \label{eq:equivariance_for_pi1}
    \Phi^0(\lambda_g(y)) = g \cdot \Phi^0(y).
\end{equation}

On the other hand, given a mapping $\Phi^0: X \to A$ such that \cref{eq:phi0y_in_ay} is satisfied and \cref{eq:equivariance_for_pi1} holds for all $g \in \Pi$, a mapping $\Phi: X \times X \to A$ satisfying \cref{eq:phixy_in_ay} and \cref{eq:phi_equivariance} is easily constructed by
\begin{equation}
    \Phi(\lambda_g(x_0),y) = g \cdot \Phi^0(\lambda_g^{-1}(y)).
\end{equation}

Therefore, we can write the operation on $X \otimes_{\Phi} A$ as
\begin{equation} \label{eq:twisted_extensions_expressed_by_phi0}
    (x,a) \ast (y,b) = \left(x \ast y, (\lambda_x^{-1}g) \cdot (b + \Phi^0(\lambda_g^{-1}(y))) \right)
\end{equation}
where $g \in \G(X)$ satisfies $\lambda_g(x_0)=x$.

We see from the above discussion that, given an $X$-graded $\G(X)$-module $A$, any map $\Phi^0: X \to A$ satisfying \cref{eq:phi0y_in_ay} and \cref{eq:equivariance_for_pi1} defines an equivariant map $\Phi: X \times X \to A$ and therefore can be used to construct the cycle set $X \otimes_{\Phi} A$ by means of \cref{eq:twisted_extensions_expressed_by_phi0}.

In the special case of a parallel extension $X \times_{\Gamma}B$, we similarly define $\Gamma^0(x) = \Gamma(x_0,x)$ which satisfies $\Gamma^0(\lambda_g(x)) = \Gamma^0(x)$ whenever $g \in \Pi_1(X,x_0)$ and $\Gamma$ can be recovered from such a map by $\Gamma(x,y) = \Gamma^0(\lambda_g^{-1}(y))$ for $g \in \G(X)$ with $\lambda_g(x_0)=y$.

The expression in \cref{eq:twisted_extensions_expressed_by_phi0} now becomes
\begin{equation} \label{eq:parallel_extensions_expressed_by_gamma0}
    (x,a) \ast (y,b) = \left(x \ast y, b + \Gamma^0(\lambda_g^{-1}(y))) \right)
\end{equation}
where $g \in \G(X)$ satisfies $\lambda_g(x_0)=x$.

We will later see that an understanding of the orbits of $\Pi_1(X,x_0)$ on $X$ will be of high relevance for our classification.

We now state one main theorem of this section:

\begin{thm} \label{thm:coprime_extensions_are_twisted_extensions}
    Let $f: Y \twoheadrightarrow X$ be a coprime extension of indecomposable cycle sets. Then $f$ is equivalent to a twisted extension $\pr: X \otimes_{\Phi} A \to X$.
\end{thm}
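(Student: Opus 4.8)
The plan is to reconstruct $Y$ from the kernel $K=\ker(\G(f))$, which by \cref{pro:kernel_of_extensions_is_in_socle} lies in $\Soc(\G(Y))$ and is therefore an abelian ideal of $\G(Y)$, normal in $\G(Y)^{\circ}$ and carrying the $\lambda$-action of $\G(Y)$ as conjugation (\cref{pro:lambda_on_socle_is_conjugation}). The first step is to show that $K$ acts transitively on each fiber of $f$. Since $f$ is an equivariant surjection, the fibers form a block system for the transitive action of $\G(Y)$ on $Y$, so the setwise stabilizer $S$ of a fiber $F=f^{-1}(x)$ acts transitively on $F$; moreover $S=\G(f)^{-1}(\Pi_1(X,x))$, so $K\trianglelefteq S$ with $S/K\cong \Pi_1(X,x)\leq\G(X)$. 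As $K$ is normal in $S$, its orbits on $F$ all have a common size, and their number $m$ divides both $|F|=|Y|/|X|$ and $|S/K|$, hence divides $\gcd(|Y|/|X|,|\G(X)|)=1$. Thus $m=1$ and $K$ is transitive on $F$, and by transitivity of $\G(Y)$ on the fibers, on every fiber.

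Writing $K_x$ for the pointwise stabilizer of $f^{-1}(x)$ in $K$, the quotient $A_x:=K/K_x$ then acts regularly on the fiber, so $|A_x|=|Y|/|X|$ and $\bigcap_{x}K_x=\id$, whence $\pi(|K|)\subseteq\pi(|Y|/|X|)$ and in particular $\gcd(|K|,|\G(X)|)=1$. I set $A=\bigoplus_{x\in X}A_x$ and make it an $X$-graded $\G(X)$-module as follows: conjugation in $\G(Y)$ permutes the subgroups $K_x$ via $g K_x g^{-1}=K_{\lambda_g(x)}$, hence induces isomorphisms $A_x\to A_{\lambda_g(x)}$; since $K$ is abelian this action factors through $\G(X)=\G(Y)/K$ and respects the grading in the sense that $g\cdot A_x\subseteq A_{\lambda_g(x)}$, and by \cref{pro:lambda_on_socle_is_conjugation} it is precisely the $\lambda$-action restricted to $K$.

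Fixing a set-theoretic section $s\colon X\to Y$ of $f$ identifies $Y$ with $A^{\sca}$ via $y\mapsto(f(y),\alpha)$, where $\alpha\in A_{f(y)}$ is the unique class with $\alpha\cdot s(f(y))=y$. Because $f$ is a constant extension, $\sigma_y$ depends only on $x=f(y)$ and maps $f^{-1}(w)$ into $f^{-1}(x\ast w)$; for $k\in K$ the identity $\sigma_y\circ k=({}^{\sigma_y}k)\circ\sigma_y$ together with the previous paragraph shows that, in the coordinates above, $\sigma_y$ is an affine map whose linear part is the module action of $\lambda_x^{-1}$. Hence the transported operation has the shape $(x,\alpha)\ast(w,\gamma)=(x\ast w,\lambda_x^{-1}\cdot\gamma+\psi(x,w))$ with $\psi(x,w)\in A_{x\ast w}$, and putting $\Phi(x,w)=\lambda_x\cdot\psi(x,w)$ (so that $\Phi(x,w)\in A_{\lambda_x(x\ast w)}=A_w$, giving \cref{eq:phixy_in_ay}) rewrites it as the twisted operation \cref{eq:cycle_operation_by_phi}; the independence of the first fiber coordinate reflects exactly the constant-extension condition.

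The main obstacle is the equivariance \cref{eq:phi_equivariance}: for an arbitrary section $s$ the resulting $\Phi$ need not be $\G(X)$-equivariant, and no equivariant section need exist (a group of order coprime to the fiber size may act on a fiber without a global fixed point). This is precisely where coprimality is decisive. Using the reduction to $\Phi^0(w)=\Phi(x_0,w)$ described before the statement, it suffices to arrange \cref{eq:phi0y_in_ay} and \cref{eq:equivariance_for_pi1}; changing the section within each fiber alters $\Phi^0$ by an explicit coboundary, so the failure of $\Pi_1(X,x_0)$-equivariance is a $1$-cocycle of the finite group $\Pi_1(X,x_0)$ with values in $A$. Since $|\Pi_1(X,x_0)|$ divides $|\G(X)|$, which is coprime to $|A|$ (as $\pi(|A|)=\pi(|Y|/|X|)$), the order of the group is invertible on $A$ and $H^1(\Pi_1(X,x_0),A)=0$; equivalently, Schur--Zassenhaus splits $1\to K\to\G(Y)\to\G(X)\to1$ with all complements conjugate. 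Averaging the defect over $\Pi_1(X,x_0)$ then produces a section-equivalent $\Phi^0$ satisfying \cref{eq:equivariance_for_pi1}, hence an equivariant $\Phi$. Every such change of section is realized by an isomorphism of $X$-graded modules in the spirit of \cref{pro:multiples_of_phi_give_isomorphic_extensions}, so the resulting twisted extension $X\otimes_{\Phi}A$ is equivalent to $f$ over $X$, which is the claim. I expect this cohomological trivialization of the equivariance defect to be the crux of the argument, the earlier steps being forced.
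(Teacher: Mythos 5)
Your architecture coincides with the paper's: transitivity of $K=\ker\G(f)$ on the fibers via a coprimality count, the grading of $A=\bigoplus_{x}K/K_x$ by conjugation, coordinatization of $Y$ by a set-theoretic section, and a coprime fixed-point/Schur--Zassenhaus argument to repair equivariance. Your per-fiber counting through the block stabilizer $S=\G(f)^{-1}(\Pi_1(X,x))$ is a correct (and slightly slicker) variant of the paper's global orbit count, and your identification of \cref{lem:coprime_affine_actions_have_fixed_points} with the vanishing of first cohomology for coprime orders is exactly the mechanism the paper uses.

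The gap is in the step you yourself call the crux. Making $\Phi^0=\Phi(x_0,-)$ equivariant under $\Pi_1(X,x_0)$ by adjusting the section \emph{inside the fiber over $x_0$} is not enough: the reconstruction $\Phi(\lambda_g(x_0),y)=g\cdot\Phi^0(\lambda_g^{-1}(y))$ produces \emph{some} equivariant $\Phi$, but for $X\otimes_{\Phi}A$ to be the given $Y$ you must know that the permutations $\sigma_{(x,a)}$ with $x\neq x_0$ are, in your coordinates, the $s(g)$-translates of those over $x_0$; this requires a single transversal invariant under the whole complement $C\cong\G(X)$, not merely a $\Pi_1(X,x_0)$-good choice in one fiber. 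The paper obtains this by taking the $\G(Y)_{\pi}$-orbit of the fixed point $y_0$ and verifying --- using the fixed-point property a second time --- that this orbit meets every fiber exactly once; your sketch omits this verification. Moreover, your closing claim that the change of section is realized by ``an isomorphism of $X$-graded modules in the spirit of \cref{pro:multiples_of_phi_give_isomorphic_extensions}'' is not accurate: a change of section is a translation $(x,a)\mapsto(x,a+c_x)$, which alters $\Phi$ by the coboundary $c_y-\lambda_x\cdot c_{x\ast y}$ rather than by postcomposition with a module automorphism. Both repairs are routine given what you already have, but as written the passage from local $\Pi_1$-equivariance of $\Phi^0$ to a globally equivariant $\Phi$ that actually describes $Y$ is missing.
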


The following lemma is a special case of a more general, unpublished, observation that was made in former collaboration with Silvia Properzi and Senne Trappeniers. In this special case, we give a simplified proof that makes use of a fixed point argument similar to the one discovered by Trappeniers.

\begin{lem} \label{lem:primes_in_an_extension}
    Let $f: Y \twoheadrightarrow X$ be an extension of indecomposable cycle sets and $K = \ker \G(f)$. Then $\pi(|K|) \subseteq \pi \left( \frac{|Y|}{|X|} \right)$.
\end{lem}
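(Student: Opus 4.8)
The plan is to prove the statement in contrapositive form and fiberwise: writing $m = |Y|/|X|$ for the common fiber size (which exists by \cref{pro:size_of_image_divides_size_of_domain}), I will show that any prime $p$ with $p \nmid m$ satisfies $p \nmid |K|$. The starting point is \cref{pro:kernel_of_extensions_is_in_socle}, which gives $K \subseteq \Soc(\G(Y))$. On the socle the two operations coincide, since $a \circ b = a + \lambda_a(b) = a+b$ whenever $\lambda_a = \id$; hence $K$ is an abelian group, and being the kernel of $\G(f)$ it is normal in $\G(Y)$. The crucial geometric input is that $K$ preserves the fibers of $f$: as $\G(f)$ is a brace homomorphism with $\G(f)(\lambda_y) = \lambda_{f(y)}$, it intertwines the actions on $Y$ and $X$, so for $k \in K$ one has $f(\lambda_k(y)) = \lambda_{\G(f)(k)}(f(y)) = f(y)$, i.e. $\lambda_k$ stabilizes each fiber $f^{-1}(x)$.

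Now fix a prime $p \nmid m$ and let $P$ be a Sylow $p$-subgroup of $K$. Since $K$ is abelian, $P$ is its unique Sylow $p$-subgroup, hence characteristic in $K$; as $K \trianglelefteq \G(Y)$ this yields $P \trianglelefteq \G(Y)$. By the previous paragraph $P$ acts on each fiber $F = f^{-1}(x)$, and since $|F| = m$ is coprime to $p$, the standard congruence $|\Fix_F(P)| \equiv |F| \pmod p$ for a $p$-group action forces $\Fix_F(P) \neq \emptyset$. Consequently the global fixed-point set $\Fix_Y(P) = \{ y \in Y : \lambda_\pi(y) = y \text{ for all } \pi \in P \}$ meets every fiber and is in particular nonempty.

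To conclude, I would exploit normality together with transitivity. Because $P \trianglelefteq \G(Y)$, the set $\Fix_Y(P)$ is $\G(Y)$-invariant: if $\lambda_\pi(y) = y$ for all $\pi \in P$ and $g \in \G(Y)$, then $\lambda_\pi(\lambda_g(y)) = \lambda_g(\lambda_{g^{-1}\pi g}(y)) = \lambda_g(y)$, using $g^{-1}\pi g \in P$. Since $Y$ is indecomposable, $\G(Y)$ acts transitively, so the nonempty invariant set $\Fix_Y(P)$ must be all of $Y$; thus $P$ fixes every point of $Y$. As $\G(Y) \leq \Sym_Y$ acts faithfully, this gives $P = 1$ and therefore $p \nmid |K|$, proving $\pi(|K|) \subseteq \pi(m) = \pi\!\left(\frac{|Y|}{|X|}\right)$.

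The main obstacle, and the place where the hypotheses are genuinely used, is the interplay in the last paragraph: one needs $P$ to be normal in $\G(Y)$ in order to propagate a single fiber-fixed point across all of $Y$ via transitivity. This normality rests on $K$ lying in the socle (which supplies both abelianness and the characteristic Sylow subgroup), and the fiber-preservation property—itself a consequence of $f$ being an \emph{extension}, forcing $K = \ker\G(f)$ into the socle—is precisely what makes the local fixed-point count available. I expect the only routine verifications to be the action-intertwining identity $f \circ \lambda_g = \lambda_{\G(f)(g)} \circ f$ (which reduces to the generators $\lambda_y$ and the homomorphism property of $f$) and the elementary $p$-group fixed-point congruence.
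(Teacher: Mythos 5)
Your proof is correct and follows essentially the same route as the paper's: both pass to the Sylow $p$-subgroup of $K$ for a prime $p$ not dividing the fiber size, use that $K \subseteq \Soc(\G(Y))$ to make this subgroup characteristic in $K$ and hence normal in $\G(Y)$, obtain a fixed point from the coprime action on a fiber, and propagate it to all of $Y$ by normality and transitivity. The only difference is that you spell out the intertwining identity and the $p$-group fixed-point congruence, which the paper leaves implicit.
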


\begin{proof}
    Take any prime $p \not\in \pi \left(\frac{|Y|}{|X|}\right)$ and let $K_p$ the unique additive $p$-Sylow subgroup of $K$ which is also the unique multiplicative $p$-Sylow subgroup of $K$ as $K$ is a trivial ideal. Therefore, $K_p$ is characteristic in $K^{\circ}$. $K$ leaves the fibers $f^{-1}(x)$ ($x \in X$) setwise fixed. As $p \nmid |f^{-1}(x)| = \frac{|Y|}{|X|}$, the action of $K_p$ has a fixed point on $Y$, say $y$. Therefore $K_p \leq \G(Y)_y$, the stabilizer of $y$. But $K_p$ is characteristic in $K^{\circ}$ which is normal in $\G(Y)^{\circ}$. Therefore, $K_p$ is normal in $\G(Y)^{\circ}$ and therefore, acts trivially on $Y$. This implies that $K_p = 1$.
\end{proof}

Recall furthermore that the \emph{holomorph} of a group $G$ is defined as the semidirect product (of groups)
\[
\Hol(G) = G \rtimes \Aut(G)
\]
Where $\Aut(G)$ acts by the respective automorphisms on the first factor. There is a canonical action of $\Hol(G)$ by affine maps on $G$ via $(g,\alpha) \cdot h = g \alpha(h)$. We can now state the following lemma:

\begin{lem} \label{lem:coprime_affine_actions_have_fixed_points}
    Let $A$ be a finite group and let $G \leq \Hol(A)$ be a subgroup with $\gcd(|G|,|A|) = 1$. If $A$ is solvable or $G$ is solvable, $G$ has a fixed point on $A$.
\end{lem}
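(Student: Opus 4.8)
The plan is to reinterpret the existence of a fixed point as conjugacy of complements and then invoke the Schur--Zassenhaus theorem. Write $\Hol(A) = A \rtimes \Aut(A)$ and let $T = A \times \{\id\}$ be the normal subgroup of translations, so that the affine action reads $(a,\alpha)\cdot h = a\alpha(h)$. Since $(a,\alpha)\cdot 1 = a$, the stabilizer of the identity $1 \in A$ is exactly $\{1\}\times\Aut(A)$, and a translation $t = (a,\id)$ sends $1$ to $a$. Hence for any $g \in G$ one has $g\cdot(t\cdot 1) = t\cdot 1$ if and only if $(t^{-1}gt)\cdot 1 = 1$, so $a = t\cdot 1$ is a fixed point of $G$ precisely when $t^{-1}Gt \subseteq \{1\}\times\Aut(A)$. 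Thus it suffices to conjugate $G$ \emph{by a translation} into the point stabilizer $\{1\}\times\Aut(A)$.

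First I would extract the consequences of coprimality. Let $\pr\colon \Hol(A)\to\Aut(A)$ be the projection and set $\bar G = \pr(G)$. As $\ker\pr = T \cong A$, the subgroup $G\cap T$ is common to $G$ and $A$, so coprimality forces $G\cap T = 1$; thus $\pr|_G$ is injective, $\bar G \cong G$ has order coprime to $|A|$, and $\bar G$ is solvable whenever $G$ is. Now set $\bar H = T \rtimes (\{1\}\times\bar G) = A \rtimes \bar G \leq \Hol(A)$. Factoring each $g = (a,\alpha)\in G$ (with $\alpha = \pr(g)\in\bar G$) as $g = (a,\id)(1,\alpha)$ shows $G \subseteq \bar H$, and both $G$ and $D := \{1\}\times\bar G$ are complements to $T$ in $\bar H$, since each meets $T$ trivially and has order $|\bar G| = |\bar H|/|T|$. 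Because $\gcd(|T|,|\bar H/T|) = \gcd(|A|,|\bar G|) = 1$ and one of $T \cong A$, $\bar H/T \cong \bar G$ is solvable by hypothesis, the conjugacy part of Schur--Zassenhaus yields an element $x\in\bar H$ with $x^{-1}Gx = D$.

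Finally I would upgrade this to conjugation by a translation. Factor the conjugator as $x = ts$ with $t\in T$ and $s\in D$; then $s^{-1}(t^{-1}Gt)s = D$, and since $s$ lies in the subgroup $D$ this gives $t^{-1}Gt = sDs^{-1} = D \subseteq \{1\}\times\Aut(A)$. By the reformulation of the first paragraph, $t\cdot 1 \in A$ is then a fixed point of $G$, which completes the proof. The only substantive ingredient is the conjugacy statement of Schur--Zassenhaus, which is precisely where the hypothesis that $A$ or $G$ is solvable enters; everything else is bookkeeping inside $\Hol(A)$, the one step demanding care being the reduction of the abstract conjugator $x$ to a pure translation.
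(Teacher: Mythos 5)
Your proof is correct and follows essentially the same route as the paper: project $G$ isomorphically onto $\Aut(A)$ using coprimality, observe that $G$ and the stabilizer copy $\{1\}\times\bar G$ are both complements to the translation subgroup in $A\rtimes\bar G$, and apply the conjugacy part of Schur--Zassenhaus. Your final step of refining the conjugator $x$ to a pure translation is harmless but not needed --- once $x^{-1}Gx = D$ with $D$ fixing the identity, $G$ already fixes $x\cdot 1$ for the arbitrary conjugator $x$, which is exactly how the paper concludes.
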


\begin{proof}
    Consider the group $H = AG \leq \Hol(A)$ and the projection $\pi: \Hol(A) \twoheadrightarrow \Aut(A)$; $(a,\alpha) \mapsto \alpha$. As $|G|$ is coprime to $|A|$, the map $\pi$ maps $G$ isomorphically onto its image $\pi(G)$. The section $s: \Aut(A) \to \Hol(A)$; $\alpha \mapsto (0,\alpha)$ now induces the section $G^{\prime} =(s\pi)(G)$ of $\pi(G)$ that obviously fixes the point $0 \in A$. By the Schur-Zassenhaus theorem \cite[§18.2]{huppert1983endliche}, $G = {}^gG^{\prime}$ for some $g \in \Hol(A)$, so $G$ fixes the point $g \cdot 0$.
\end{proof}

We can finally prove the first main result of this section:

\begin{proof}[Proof of \cref{thm:coprime_extensions_are_twisted_extensions}]
    Let $\pi = \pi(|\G(X)|)$, and consider the subgroups $\G(Y)_{\pi} \leq \G(Y)$ and $K = \ker(f) \trianglelefteq \G(Y)$. By \cref{lem:primes_in_an_extension} and the assumption of coprimality, $\gcd(|K|,|\G(Y)_{\pi}|)= 1$, therefore $\G(f)$ maps $\G(Y)_{\pi}$ isomorphically onto $\G(X)$ and furthermore, we have a semidirect product of braces $\G(Y) = K \rtimes \G(Y)_{\pi}$.

    By definition, $K$ fixes the fibers $\mathcal{F}_x = f^{-1}(x)$ ($x \in X$) setwise. Consider the orbit set $K \setminus Y$. As $K$ is normal in $\G(Y)$, the orbits $\Orb_K(y)$ ($y \in Y$) form blocks for the action of $\G(Y)$ on $Y$. As they have the same size and refine the partition of $Y$ by the family $(\mathcal{F}_x)_{x \in X}$, we see that $|K \setminus Y| = |X| \cdot m$ for some $m \mid |\mathcal{F}_x|$.
    
    On the other hand, the decomposition $\G(Y) = K \rtimes \G(Y)_{\pi}$ shows that $\G(Y)_{\pi}$ acts transitively on $|K \setminus Y|$, so $\pi(|K \setminus Y|) \subseteq \pi$. In particular, $\gcd(|K \setminus Y|, |\mathcal{F}_x|) = 1$, so $|K \setminus Y| = |X|$.
    
    This shows that $K$ acts transitively on each fiber. As $K \leq \Soc(K)$ by \cref{pro:kernel_of_extensions_is_in_socle}, we see that $K^{\circ}$ is abelian, so $K$ induces regular transitive actions of abelian groups on the fiber $\mathcal{F}_x$ that are pairwise conjugate. Denoting the factors $A_x = \mathrm{im}(K \to \Sym_{\mathcal{F}_x})$, we can consider $K$ as a subgroup of $A = \prod_{x \in X}A_x$ in a way that $K$ is a subdirect product of the $(A_x)_{x \in X}$.

    Denote by $s: \G(X) \hookrightarrow \G(Y)$ the canonical section that identifies $\G(X)$ with $\G(Y)_{\pi}$. Now pick a point $x_0 \in X$ and consider the fundamental group $\Pi_{x_0} = \Pi_1(X,x_0)$ resp. the subgroup $\Pi_{x_0}^{\prime} = s(\Pi_{x_0})$. Then $\Pi_{x_0}^{\prime}$ fixes the fiber $\mathcal{F}_{x_0}$ as a set and, as $K$ is normalized by $\Pi_{x_0}^{\prime}$, we deduce that the image of $\Pi_{x_0}^{\prime}$ in $\Sym(\mathcal{F}_{x_0})$ normalizes the regular transitive action of $A_{x_0}$ on $\mathcal{F}_{x_0}$. As $\gcd(|\Pi_{x_0}^{\prime}|,|A_{x_0}|) = 1$, \cref{lem:coprime_affine_actions_have_fixed_points} implies that $\Pi_{x_0}^{\prime}$ has a fixed point $y_0 \in \mathcal{F}_{x_0}$.

    By a standard arguments concerning group actions, we see that $\Pi_{\lambda_g(x_0)} = {}^g \Pi_{x_0}$. We furthermore see that $\lambda_{s(g)}(y_0)$ is a fixed point of $s(\Pi_{\lambda_g(x_0)})$ acting on the fiber $\mathcal{F}_{\lambda_g(x_0)}$.
    
    Now consider the orbit $X^{\prime} = \Orb_{\G(Y)_{\pi}}(y_0)$. As $\G(X)$ acts transitively on $X$, we have $X^{\prime} \cap \mathcal{F}_x \neq \emptyset$ for all $x \in X$. Furthermore, if $\{y_1,y_2 \} \in \mathcal{F}_x \cap X^{\prime}$ for some $x \in X$, then $y_2 = \lambda_{s(g)}(y_1)$ for some $g \in \G(X)$. As $\lambda_{s(g)}$ then fixes the fiber $\mathcal{F}_x$ setwise, we conclude that $g \in \Pi_x$. But we have seen that the elements in $X^{\prime} \cap \mathcal{F}_x$ are all fixed by $s(\Pi_x)$, so $y_1 = y_2$. Therefore, $|\mathcal{F}_x \cap X^{\prime}| =1 $ for all $x \in X$. For $x \in X$, define the point $y_x$ by $\{ y_x\} = X^{\prime} \cap \mathcal{F}_x$.

    Recalling that $A_x = \mathrm{im}(K \to \Sym_{\mathcal{F}_x})$ and that $K$ is normalized by $\G(Y)$, we get an action of $\G(X)$ by automorphisms of $A = \bigoplus_{x \in X} A_x$ by
    \begin{align*}
        \G(X) \times A_x & \to A_{\lambda_g(x)} \\
        (g,\lambda_k|_{\mathcal{F}_x}) & \mapsto g \cdot (\lambda_k|_{\mathcal{F}_x}) =(\lambda_{{}^{s(g)}k})|_{\mathcal{F}_{\lambda_g(x)}}.
    \end{align*}
    With this rule $A$ is an $X$-graded $G$-module.

    We can now use the fixed points $y_x$ ($x \in X$) to coordinatize $Y$ by means of the bijection
    \[
        \gamma: A^{\sca}  \to Y ; \quad (x,a)  \mapsto a + y_x.
    \]
    Here we suggestively use additive notation for the action of $A_x$ on $\mathcal{F}_x$. More precisely, for $a = \lambda_k|_{\mathcal{F}_x}$, we mean $a + y_x = \lambda_k(y_x)$. Note that $\gamma(x,0) = y_x$.

    Now $\gamma$ is, in fact, an isomorphism of $\G(X)$-actions: writing $a = \lambda_k|_{\mathcal{F}_x}$, we calculate for $g \in \G(X)$:
    \begin{align*}
        \lambda_{s(g)}(\gamma(x,a)) & = \lambda_{s(g)}(\lambda_k(y_x)) \\
        & = \lambda_{{}^{s(g)}k} ( \lambda_{s(g)}(y_x)) \\
        & = (g \cdot (\lambda_k|_{\mathcal{F}_x}))(y_{\lambda_g(x)}) \\
        & = \gamma(\lambda_g(x), g \cdot a) = \gamma(g \cdot (x,a)).
    \end{align*}

    Under the coordinatization provided by $\gamma$, we can therefore identify $Y = A^{\mathrm{sc}}$ for some $X$-graded $\G(X)$-module $A$.

    Due to the fact that $f: Y \to X$ is an extension, we know that $\lambda_{(x,a)}$ is only dependent on $x$. Moreover, the decomposition $\G(Y) = K \rtimes \G(X)_{\pi}$ tells us that for each $(x,a) \in Y$, there is a unique decomposition
    \[
    \lambda_{(x,a)} = k_x + s(\lambda_x)
    \]
    where $k_x \in K$. As $K \leq A$, there is a map $\Phi: X \times X \to A$ with $\Phi(x,y) \in A_y$ such that for all $x,y \in X$, we have 
    \[
    \lambda_{k_x}(y,a) = (y,a -\Phi(x,y)).
    \]
    As $K \subseteq \Soc(\G(Y))$, we can rewrite
    \[
    \lambda_{(x,a)} = k_x + s(\lambda_x) = k_x s(\lambda_x),
    \]
    therefore
    \begin{align*}
    (x,a) \ast (y,b) & = \lambda_{(x,a)}^{-1}(y,b) \\
    & = \lambda_{s(\lambda_x)}^{-1} \lambda_{k_x}^{-1}(y,b) \\
    & = \lambda_{s(\lambda_x)}^{-1} (y, b + \Phi(x,y)) \\
    & = \lambda_x^{-1} \cdot (y, b + \Phi(x,y)).
    \end{align*}

    We are left with proving equivariance: pick any $g \in \G(X)$ and make use of \cref{pro:lambda_on_socle_is_conjugation} to obtain that
    \begin{align*}
    \lambda_{(\lambda_g(x),a)} & = \lambda_{s(g)}(k_x + s(\lambda_x))  \\
    & = \lambda_{s(g)} (k_x) + \lambda_{s(g)}(s(\lambda_x)) \\
    & = {}^{s(g)}k_x + s({\lambda_{\lambda_g(x)}}).
    \end{align*}
    which implies $k_{\lambda_g(x)} = {}^{s(g)}k_x$ resp. $s(g)k_x = k_{\lambda_g(x)}s(g)$. Therefore, for $(y,b) \in Y$,
    \begin{align*}
        \lambda_{s(g)k_x}(y,b) & = \lambda_{k_{\lambda_g(x)}s(g)}(y,b) \\
        g \cdot (y, b + \Phi(x,y)) & =  \Phi(\lambda_g(x),\lambda_g(y)) + g \cdot (y,b) \quad (\textnormal{because } g \cdot (y,b) \in \lambda_g(y)) \\
        g \cdot (y,b) + g \cdot \Phi(x,y) & = g \cdot (y,b) + \Phi(\lambda_g(x),\lambda_g(y)).
    \end{align*}
    Cancelling $g \cdot (y,b)$ we have proven that $g \cdot \Phi(x,y) = \Phi(\lambda_g(x),\lambda_g(y))$.
\end{proof}

Given a cycle set of the form $Z = X \otimes_{\Phi}A$, we write $Z_0 = \{ (x,0): x \in X \} \subseteq Z$. A careful investigation of the construction shows that we can choose any $\G(Y)_{\pi}$-invariant section of $f: Y \twoheadrightarrow X$ to be the set $Z_0$ under our coordinatization procedure:

\begin{cor} \label{cor:sections_can_be_chosen_as_zero_sections}
    Let $f: Y \twoheadrightarrow X$ be a coprime extension of indecomposable cycle sets. Let $\pi = \pi(|\G(X)|)$ and let $\mathfrak{O} \subseteq Y$ be a $\G(Y)_{\pi}$-orbit such that the restriction $f|_{\mathfrak{O}}: \mathfrak{O} \to X$ is bijective. Then there is a twisted extension $\pr: Y' = X \otimes_{\Phi} A \twoheadrightarrow X$, together with an isomorphism $g: Y' \overset{\sim}{\to} Y$ such that $f \circ g = \pr$ and $g (Y^{\prime}_0) = \mathfrak{O}$.
\end{cor}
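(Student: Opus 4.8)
The plan is to re-run the construction carried out in the proof of \cref{thm:coprime_extensions_are_twisted_extensions}, but to \emph{start} it from a base fixed point dictated by the prescribed orbit $\mathfrak{O}$, rather than from one produced by \cref{lem:coprime_affine_actions_have_fixed_points}. The key observation is that the only property of the base point $y_0$ used in that proof was that $y_0 \in \mathcal{F}_{x_0} = f^{-1}(x_0)$ is fixed by $\Pi_{x_0}^{\prime} = s(\Pi_1(X,x_0))$; from such a $y_0$ the proof takes the section to be $X^{\prime} = \Orb_{\G(Y)_{\pi}}(y_0)$, defines $y_x$ by $\{y_x\} = X^{\prime} \cap \mathcal{F}_x$, and builds the coordinatizing isomorphism $\gamma \colon A^{\sca} \to Y$, $(x,a) \mapsto a + y_x$, with $\gamma(x,0) = y_x$. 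So it suffices to exhibit, inside $\mathfrak{O}$, such a base fixed point whose $\G(Y)_{\pi}$-orbit is precisely $\mathfrak{O}$.

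First I would fix a base point $x_0 \in X$ and let $y_0$ be the unique point of $\mathfrak{O} \cap \mathcal{F}_{x_0}$, which exists and is unique since $f|_{\mathfrak{O}}$ is a bijection onto $X$. I then claim $y_0$ is automatically fixed by $\Pi_{x_0}^{\prime} = s(\Pi_1(X,x_0))$. Indeed, $\Pi_{x_0}^{\prime} \leq \G(Y)_{\pi}$ because $s$ identifies $\G(X)$ with $\G(Y)_{\pi}$; hence the single $\G(Y)_{\pi}$-orbit $\mathfrak{O}$ is $\Pi_{x_0}^{\prime}$-invariant. As already noted in the proof of \cref{thm:coprime_extensions_are_twisted_extensions}, $\Pi_{x_0}^{\prime}$ also stabilizes the fiber $\mathcal{F}_{x_0}$ setwise. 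Therefore $\Pi_{x_0}^{\prime}$ preserves the singleton $\mathfrak{O} \cap \mathcal{F}_{x_0} = \{y_0\}$ and thus fixes $y_0$.

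Having produced an admissible base point $y_0$ entirely out of $\mathfrak{O}$, I would feed it into the construction of \cref{thm:coprime_extensions_are_twisted_extensions} verbatim. This yields an $X$-graded $\G(X)$-module $A$, a map $\Phi$ satisfying \cref{eq:phixy_in_ay} and \cref{eq:phi_equivariance}, a twisted extension $\pr \colon Y^{\prime} = X \otimes_{\Phi} A \twoheadrightarrow X$, and the coordinatizing isomorphism $g = \gamma \colon Y^{\prime} \to Y$ with $f \circ g = \pr$. Since the section used there is $X^{\prime} = \Orb_{\G(Y)_{\pi}}(y_0)$ and $y_0$ lies in the single $\G(Y)_{\pi}$-orbit $\mathfrak{O}$, we get $X^{\prime} = \mathfrak{O}$; consequently $y_x$, the unique point of $X^{\prime} \cap \mathcal{F}_x = \mathfrak{O} \cap \mathcal{F}_x$, is exactly the point of $\mathfrak{O}$ lying over $x$. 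Hence $g(Y^{\prime}_0) = \gamma(\{(x,0) : x \in X\}) = \{y_x : x \in X\} = \mathfrak{O}$, as required.

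The content here is deliberately light: there is no new analytic or combinatorial difficulty, and \cref{lem:coprime_affine_actions_have_fixed_points} is not even invoked, its role (producing \emph{some} $\Pi_{x_0}^{\prime}$-fixed point) being replaced by the elementary invariance argument above. The one point that warrants care, which I regard as the main thing to verify, is that the proof of \cref{thm:coprime_extensions_are_twisted_extensions} depends on $y_0$ only through the property that $y_0 \in \mathcal{F}_{x_0}$ is $\Pi_{x_0}^{\prime}$-fixed, so that substituting our $\mathfrak{O}$-derived $y_0$ leaves every later step intact. Inspecting that proof confirms this, since all subsequent choices, namely $X^{\prime}$, the points $y_x$, the isomorphism $\gamma$, and the cocycle $\Phi$, are determined by $y_0$.
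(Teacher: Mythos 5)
Your proposal is correct and is essentially the argument the paper intends: the paper gives no written proof of \cref{cor:sections_can_be_chosen_as_zero_sections} beyond the remark that a careful investigation of the construction in \cref{thm:coprime_extensions_are_twisted_extensions} yields the claim, and your re-run of that construction from the base point $y_0 \in \mathfrak{O} \cap \mathcal{F}_{x_0}$ is precisely that investigation. The one detail you supply that the paper leaves implicit --- that this $y_0$ is automatically fixed by $\Pi_{x_0}^{\prime}$ because $\Pi_{x_0}^{\prime} \leq \G(Y)_{\pi}$ preserves both the orbit $\mathfrak{O}$ and the fiber $\mathcal{F}_{x_0}$, whose intersection is a singleton by the bijectivity hypothesis --- is exactly the right replacement for the appeal to \cref{lem:coprime_affine_actions_have_fixed_points}, and your check that the remainder of the construction depends on $y_0$ only through this fixed-point property is accurate.
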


Recall that a cycle set $X$ is called \emph{uniconnected} if $X$ is indecomposable and $\Pi_1(X,x_0) = 1$ which is the same as saying that $\G(X)$ acts regularly on $X$. In this case, each $X$-graded $\G(X)$-module is isomorphic to a permutation module, and \cref{thm:coprime_extensions_are_twisted_extensions} specializes to:

\begin{cor} \label{cor:coprime_extensions_of_a_uniconnected_cycle_set}
    Let $f: Y \to X$ be a coprime extension of indecomposable cycle sets where $X$ is uniconnected. Then $f$ is equivalent to a parallel extension $\pr: X \times_{\Gamma} A \twoheadrightarrow X$.
\end{cor}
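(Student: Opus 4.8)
The plan is to reduce immediately to \cref{thm:coprime_extensions_are_twisted_extensions} and then exploit uniconnectedness to force the relevant graded module to be a permutation module, at which point the twisted extension is a parallel extension essentially by definition. First I would apply \cref{thm:coprime_extensions_are_twisted_extensions} to obtain an $X$-graded $\G(X)$-module $A = \bigoplus_{x \in X} A_x$ together with an equivariant map $\Phi$ such that $f$ is equivalent to $\pr: X \otimes_{\Phi} A \twoheadrightarrow X$. Everything then hinges on the structure of $A$.

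The main step is to show that, when $X$ is uniconnected, every $X$-graded $\G(X)$-module is isomorphic to a permutation module. Fix a base point $x_0 \in X$ and set $B = A_{x_0}$. Since $X$ is uniconnected, $\G(X)$ acts regularly on $X$, so for each $x \in X$ there is a \emph{unique} $g_x \in \G(X)$ with $\lambda_{g_x}(x_0) = x$. Applying the grading condition $g \cdot A_x \subseteq A_{\lambda_g(x)}$ to both $g_x$ and $g_x^{-1}$ shows that $g_x$ restricts to an isomorphism $B = A_{x_0} \overset{\sim}{\to} A_x$ of abelian groups. I would then define $\psi: B^X \to A$ on the $x$-component by $b \mapsto g_x \cdot b$ and verify that it is an isomorphism of $X$-graded $\G(X)$-modules: it respects the grading by construction and is bijective componentwise, while equivariance reduces to the identity $g_{\lambda_g(x)} = g\, g_x$, which holds by regularity since both elements carry $x_0$ to $\lambda_g(x)$.

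Having established $A \cong B^X$ via $\psi$, I would transport $\Phi$ to $\Phi' = \psi^{-1} \circ \Phi: X \times X \to B^X$ and invoke \cref{pro:isomorphisms_of_A_give_isomorphic_extensions} (whose proof applies verbatim to an isomorphism between two distinct $X$-graded modules) to conclude that $\pr$ is equivalent to $X \otimes_{\Phi'} B^X$. Finally, the discussion preceding the corollary identifies a twisted extension by a permutation module with a parallel extension: on $(B^X)^{\sca} = X \times B$ the action is $g \cdot (x,a) = (\lambda_g(x),a)$, so $\G(X)$ acts trivially on the $B$-components; hence \eqref{eq:phi_equivariance} specializes to \eqref{eq:equivariance_for_parallel_extensions} and the operation \eqref{eq:cycle_operation_by_phi} collapses to \eqref{eq:cycle_operation_parallel_extension} with $\Gamma = \Phi'$. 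This exhibits $\pr$ as the parallel extension $X \times_{\Gamma} B$, proving the claim.

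The hard part will be the second paragraph, namely checking that $\psi$ is a genuine isomorphism of $X$-graded $\G(X)$-modules rather than merely a componentwise isomorphism of abelian groups: the chosen representatives $g_x$ must assemble into a globally equivariant map. Regularity of the action is exactly what eliminates the cocycle-type ambiguity in this choice, so once the identity $g_{\lambda_g(x)} = g\, g_x$ is in hand the remaining verification is routine bookkeeping.
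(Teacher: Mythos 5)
Your proof is correct and follows the same route as the paper, which derives the corollary directly from \cref{thm:coprime_extensions_are_twisted_extensions} together with the observation (stated without proof just before the corollary) that regularity of the $\G(X)$-action forces every $X$-graded $\G(X)$-module to be a permutation module. Your second paragraph simply supplies the details of that observation --- the identity $g_{\lambda_g(x)} = g\,g_x$ from regularity is indeed exactly what makes the componentwise isomorphisms assemble equivariantly --- so there is nothing to object to.
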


\section{Permutation groups of coprime extensions of cycle sets} \label{sec:permutation_groups}

In this section, we will give a simple description of $\G(Y)$ for $Y = X \otimes_{\Phi} A$ if $X$ and $A$ are chosen in a suitable way.

Given a map $\Phi: X \times X \to A$ satisfying \cref{eq:phixy_in_ay,eq:phi_equivariance}, we define the elements $\Phi_x = \sum_{y \in X} \Phi(x,y) \in A$ ($x \in A$). Using these elements, we can also express \cref{eq:phi_equivariance} as
\begin{equation}
    g \cdot \Phi_x = \Phi_{\lambda_g(x)}.
\end{equation}

The following result describes the permutation brace of a twisted extension that results in a coprime extension of indecomposable cycle sets. Note that for an $X$-graded $G$-module $A$, we identify $A$ with the subgroup of $\Sym_{A^{\sca}}$ that acts by $f \cdot (x,b) = (x,b+f_x)$ for $f \in A$, $(x,a) \in A^{\sca}$.

\begin{thm} \label{pro:generating_kernel_of_Gpr}
    Let $X$ be indecomposable, and let $\pi = \pi(|\G(X)|)$. Consider a twisted extension $\pr: Y = X \otimes_{\Phi} A \twoheadrightarrow X$ where $\gcd(|\G(X)|,|A_x|) = 1$ for $x \in X$, where $Y$ is indecomposable, then there is a semidirect product decomposition
    \[
    \G(Y) = \G(Y)_{\pi} \ltimes K
    \]
    where $\G(Y)_{\pi} = \{ (x,a) \mapsto (\lambda_g(x), g \cdot a): g \in \G(X)\} \cong \G(X)$ and $K = \langle \Phi_x : x \in X \rangle \leq A$ is a trivial brace such that the $\lambda$-action of $\G(Y)_{\pi}$ on $K$ coincides with $\G(X)$-module structure on $K$ in the following sense: for $f \in K$ acting by $\lambda_f(x,a) = (x,a+f_x)$ and $g \in \G(X)$, we have $\lambda_{s(g)}(f) = g \cdot f$, where $s: \G(X) \to \G(Y)$ denotes the unique section of $\G(\pr):\G(Y) \twoheadrightarrow \G(X)$ with $s(\G(X)) = \G(Y)_{\pi}$.
\end{thm}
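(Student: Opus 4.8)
The plan is to exploit the fact that the hypotheses here are precisely the output conditions of \cref{thm:coprime_extensions_are_twisted_extensions}, so that the semidirect-product decomposition is essentially already available, and the real work is to pin down $K$ explicitly as $\langle \Phi_x : x \in X\rangle$ and to verify the compatibility of the two actions. First I would observe that since $Y = X \otimes_\Phi A$ is indecomposable and $\gcd(|\G(X)|, |A_x|) = 1$ for all $x$, the projection $\pr: Y \twoheadrightarrow X$ is a coprime extension of indecomposable cycle sets: indeed $|A| = \prod_x |A_x|$ has $\pi(|A|) \cap \pi = \emptyset$ by coprimality, and $\frac{|Y|}{|X|} = |A_x|$ divides $|A|$. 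Hence by the proof of \cref{thm:coprime_extensions_are_twisted_extensions} (via \cref{lem:primes_in_an_extension} applied to $K := \ker \G(\pr)$ and the coprimality hypothesis) we get $\gcd(|K|, |\G(Y)_\pi|) = 1$, that $\G(\pr)$ maps $\G(Y)_\pi$ isomorphically onto $\G(X)$, and the brace semidirect product $\G(Y) = K \rtimes \G(Y)_\pi$. This immediately gives the decomposition $\G(Y) = \G(Y)_\pi \ltimes K$ together with the unique section $s$.

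Next I would identify the two factors concretely. For $\G(Y)_\pi$: a direct computation from the operation \eqref{eq:cycle_operation_by_phi} shows that the map $\tau_g: (x,a) \mapsto (\lambda_g(x), g \cdot a)$ for $g \in \G(X)$ is an automorphism of the cycle set $Y$ fixing $\pr$, and these assemble into a copy of $\G(X)$ inside $\G(Y)$ mapping isomorphically under $\G(\pr)$; since $\G(Y)_\pi$ is the unique such $\pi$-Hall complement, it must coincide with $\{\tau_g : g \in \G(X)\}$, giving the stated description of $\G(Y)_\pi$ and of $s$. For $K$: I would compute the generators $\lambda_{(x,a)}$ of $\G(Y)$. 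From \eqref{eq:cycle_operation_by_phi} one reads off that $\lambda_{(x,a)} = s(\lambda_x)\, \cdot\, t_x$ where $t_x \in A$ acts by translation; tracking the $\Phi$-terms shows the $A$-component is translation by $\Phi(x,\,\cdot\,)$, and summing over the fiber — equivalently, projecting $\lambda_{(x,a)}$ onto the normal factor $K$ via the retraction $\G(Y) \twoheadrightarrow K$ — yields exactly $\Phi_x = \sum_{y \in X}\Phi(x,y)$. Thus $K = \langle \Phi_x : x \in X\rangle$, and $K \leq A$ is a trivial brace because $K \subseteq \Soc(\G(Y))$ by \cref{pro:kernel_of_extensions_is_in_socle}, forcing $K^\circ$ abelian.

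Finally I would verify the compatibility of the $\lambda$-action with the module structure. Using \cref{pro:lambda_on_socle_is_conjugation}, for $f \in K \subseteq \Soc(\G(Y))$ we have $\lambda_{s(g)}(f) = {}^{s(g)}f$, so the claim reduces to checking that conjugation of the translation-by-$f$ element by $\tau_g = s(g)$ produces translation by $g \cdot f$. This is the computation $\tau_g \circ \lambda_f \circ \tau_g^{-1}(x,a) = \tau_g\bigl(\lambda_g^{-1}(x), g^{-1}\cdot a + f_{\lambda_g^{-1}(x)}\bigr) = (x, a + g \cdot f_{\lambda_g^{-1}(x)})$, and the grading condition $g \cdot A_{\lambda_g^{-1}(x)} \subseteq A_x$ together with the definition of the $\G(X)$-module structure identifies $g \cdot f_{\lambda_g^{-1}(x)}$ as the $x$-component of $g \cdot f$, exactly as required. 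The main obstacle is not conceptual but bookkeeping: one must take genuine care that the grading index shifts in $\lambda_g^{-1}(x)$ match up correctly under conjugation, and that the identification of the normal factor with $\langle \Phi_x\rangle$ rather than with all of $A$ is justified — the latter follows because $K$ acts transitively on each fiber (as established inside the proof of \cref{thm:coprime_extensions_are_twisted_extensions}), so $K$ is precisely the subgroup of $A$ generated by the translations appearing in the $\lambda_{(x,a)}$, namely the $\Phi_x$.
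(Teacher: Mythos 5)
Your opening paragraph is fine: coprimality of $\pr$ and the brace decomposition $\G(Y)=K\rtimes\G(Y)_{\pi}$ with its unique section $s$ do follow as you say. The genuine gap is in your second paragraph, at the step identifying $\G(Y)_{\pi}$ with $\{\tau_g:(x,a)\mapsto(\lambda_g(x),g\cdot a)\}$ — and this is precisely the point the paper flags as nontrivial in the remark preceding the theorem. First, $\tau_g$ is in general \emph{not} an automorphism of the cycle set $Y$ (already $\lambda_g(x\ast y)\neq\lambda_g(x)\ast\lambda_g(y)$ for general $g$), and even if it were, $\Aut(Y)$ is not contained in $\G(Y)$, so that route says nothing about membership in $\G(Y)$. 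What \cref{eq:cycle_operation_by_phi} actually gives is the factorization $\sigma_{(x,a)}=\tau_{\lambda_x}^{-1}\circ\lambda_{\Phi_x}$ inside $\Sym_{A^{\sca}}$; but the group generated by such products need not contain the individual factors. Generically one can only conclude that $\G(Y)_{\pi}$ is a conjugate ${}^{n}\{\tau_g\}$ for some translation $n\in A$ (all complements of $K$ are $A$-conjugate by Schur--Zassenhaus, and ``uniqueness of the Hall $\pi$-part'' refers to the additive characteristic subgroup, which you cannot match with $\{\tau_g\}$ before knowing $\{\tau_g\}\subseteq\G(Y)$). If $n\neq 0$, the projection of $\lambda_{(x,a)}$ onto $K$ along $\G(Y)_{\pi}$ is translation by a \emph{cohomologous} parameter $\Phi'_x$, not by $\Phi_x$, and your identification of $K$ collapses.

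Closing this gap is the actual content of the paper's proof: \cref{lem:GY_pi_fixes_a_section} produces a $\G(Y)_{\pi}$-invariant section $\{(x,a_x)\}$, \cref{cor:sections_can_be_chosen_as_zero_sections} recoordinatizes it to the zero section with a possibly different parameter $\Gamma$, and then \cref{lem:embeddings_maps_Z0_to_orbits} — via the telescoping argument over the exponent $d$ of $\G(X)^+$, which uses $\gcd(d,|A|)=1$ — forces the shift $(\delta_x)$ to be $\G(X)$-equivariant, hence $\Gamma=\Phi$ and $Y_0$ itself is a $\G(Y)_{\pi}$-orbit; only then does the module structure pin $\G(Y)_{\pi}$ down as $\{\tau_g\}$ and $K$ as $\langle\Phi_x:x\in X\rangle$. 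Your final paragraph (the conjugation computation via \cref{pro:lambda_on_socle_is_conjugation} and the triviality of $K$ from $K\leq\Soc(\G(Y))$) is correct and matches the paper, but it only applies after the identification has been established.
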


Before proving the result, note that this statement is not completely trivial! It is tempting to reverse the construction from the proof of \cref{thm:coprime_extensions_are_twisted_extensions} and decompose the cycle set permutation $(y,b) \mapsto \lambda_x^{-1} \cdot (y, b + \Phi(x,y))$ into components $(y,b) \mapsto (y,b + \Phi(x,y))$ and $(y,b) \mapsto \lambda_x^{-1} \cdot (y,b)$, the latter coming from the action of the $\pi$-subgroup in the proof. However, it is not clear at all that for an arbitrary extension $\pr: X \otimes_{\Phi} A \twoheadrightarrow X$, the construction in the proof will reproduce the parameter $\Phi$ as there might be many parameters giving the same solution. We need some lemmata before proving the result.

\begin{lem} \label{lem:GY_pi_fixes_a_section}
    Let $X$ be an indecomposable cycle set and let $A$ be an $X$-graded $\G(X)$-module with $\gcd(|\G(X)|,|A_x|)=1$ for all $x$ and suppose that $\pr: Y = X \otimes_{\Phi} A \twoheadrightarrow X$ a twisted extension that is not necessarily indecomposable. Let $\pi = \pi(|\G(X)|)$. Then there is a (set-theoretic) section $s: X \hookrightarrow Y$ with $\pr \circ s = \id_X$ such that $\G(Y)_{\pi}$ fixes $s(X)$.
\end{lem}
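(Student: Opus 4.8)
The plan is to realise $\G(Y)$ as a subgroup of a semidirect product and then solve the invariance condition by a fixed-point argument. First I would observe that the cycle set permutation $\sigma_{(x,a)}\colon (y,b)\mapsto \lambda_x^{-1}\cdot(y,b+\Phi(x,y))$ depends only on $x$ and factors as the action of the element $\phi_x=(\Phi(x,y))_{y\in X}\in A$ followed by the action of $\lambda_x^{-1}\in\G(X)$ on $A^{\sca}$. Since the $\G(X)$-action on $A^{\sca}$ normalises the $A$-action (this is exactly the $G$-equivariance in \cref{eq:action_on_a_sc}) and the two actions are faithful with trivial intersection, the subgroup of $\Sym_{A^{\sca}}$ they generate is an internal semidirect product $A\rtimes\G(X)$, and $\G(Y)=\langle \sigma_{(x,a)}\rangle$ sits inside it. Under the projection $p\colon A\rtimes\G(X)\twoheadrightarrow\G(X)$ one checks that $p$ agrees with $\G(\pr)$ on the generators $\lambda_{(x,a)}$, so $K=\ker\G(\pr)=\G(Y)\cap A$. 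Because $A=\bigoplus_{x}A_x$ and each $|A_x|$ is coprime to $|\G(X)|$, the whole group $A$ has order coprime to $|\G(X)|$; consequently $|A|$ is a $\pi'$-number and $\G(Y)_\pi$, being a $\pi$-group, has order coprime to $|A|$.

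Next I would reformulate the conclusion. A set-theoretic section $s$ with $\pr\circ s=\id_X$ is the same datum as a transversal $T=s(X)\subseteq Y$ meeting each fibre $\mathcal{F}_x=\pr^{-1}(x)$ in exactly one point, and $\G(Y)_\pi$ fixing $s(X)$ means $T$ is invariant under the action of $\G(Y)_\pi$ on $Y$; note that each $h\in\G(Y)_\pi\le A\rtimes\G(X)$ permutes the fibres via $p(h)\in\G(X)$, so it does send transversals to transversals. Now the set $\mathcal{T}$ of all transversals is a torsor under $A$: the subgroup $A$ preserves every fibre and, since $A_x$ acts regularly on $\mathcal{F}_x$, the group $A$ acts simply transitively on $\mathcal{T}$. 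As $\G(Y)_\pi$ normalises $A$ inside $A\rtimes\G(X)$, its action on $\mathcal{T}$ is affine with respect to this torsor structure: fixing a base transversal $T_0$ to identify $\mathcal{T}\cong A$, each $h\in\G(Y)_\pi$ acts by $a\mapsto {}^h a + b_h$ with $b_h\in A$, which gives a homomorphism $\G(Y)_\pi\to\Hol(A)$.

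Finally, the image $G'\le\Hol(A)$ of this homomorphism has order dividing $|\G(Y)_\pi|$, which is coprime to $|A|$, and $A$ is abelian, hence solvable. Therefore \cref{lem:coprime_affine_actions_have_fixed_points} yields a fixed point $a_*\in A$ of $G'$; translating back, the transversal $T_*=a_*\cdot T_0$ is fixed by all of $\G(Y)_\pi$, and the section $s$ with $s(X)=T_*$ is the one we want. I expect the only genuine bookkeeping to be the identification $\G(Y)\le A\rtimes\G(X)$ together with the check $p=\G(\pr)$, and the verification that the $\G(Y)_\pi$-action on $\mathcal{T}$ is genuinely affine; once these are in place the coprime fixed-point lemma does the work. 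Equivalently, writing $\G(Y)_\pi$ as the graph of a $1$-cocycle $\psi\colon\G(X)\to A$, the invariant section exists precisely when $\psi$ is a coboundary, which holds because $\gcd(|\G(X)|,|A|)=1$ forces $H^1(\G(X),A)=0$; this is the cohomological shadow of the same fixed-point argument.
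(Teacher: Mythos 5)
Your proof is correct and takes essentially the same route as the paper: the paper's explicitly defined affine action of $\G(Y)$ on $A$ is exactly your action of $\G(Y)_{\pi}$ on the $A$-torsor of transversals, and both arguments conclude by feeding the resulting subgroup of $\Hol(A)$ into \cref{lem:coprime_affine_actions_have_fixed_points}. The semidirect-product embedding, torsor language, and closing $H^1$ remark are presentational refinements rather than a different argument.
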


\begin{proof}
    We construct an affine action of $\G(Y)$ on $A$ as follows: for $g \in \G(Y)$, write $\Bar{g} = \G(\pr)(g)$. It is easy to check that $g \cdot (a_x)_{x \in X} = \lambda_g(a_{\Bar{g}^{-1}(x)})_{x \in X}$ defines an action of $\G(Y)$ on $A$. Writing the action of a $g \in \G(Y)$ on $Y$ as $\lambda_g(x,a_x) = (\lambda_{\Bar{g}}(x), \Bar{g} \cdot (a_x + b_{g,x}))$, this action can be described by the formula:
    \[
    g \cdot (a_x)_{x \in X} =  (\Bar{g} \cdot (a_{\lambda_{\Bar{g}}^{-1}(x)} + b_{g,\lambda_{\Bar{g}}^{-1}(x)}))_{x \in X}.
    \]
    It is now immediate that the action represents $\G(Y)$ as a subgroup of $\Hol(A)$. As $\gcd(|\G(Y)_{\pi}|,|A|) = 1$, we can again use \cref{lem:coprime_affine_actions_have_fixed_points} to conclude that $\G(Y)_{\pi}$ has a fixed point on $A$. Let $(a_x)_{x \in X}$ be such a fixed point, then $\lambda_g(a_x) = a_{\lambda_{\Bar{g}}(x)}$ for all $x \in X$, $g \in \G(Y)_{\pi}$, so $\G(Y)_{\pi}$ fixes the set $\mathcal{O} = \{(x,a_x): x \in X \}$, which is the image of the set-theoretic section $s: X \hookrightarrow Y$; $x \mapsto (x,a_x)$.
\end{proof}

Recall that for a twisted extension $Y = X \otimes_{\Phi}A$, we define $Y_0 = \{ (x,0): x \in X \} \subseteq Y$.

\begin{lem} \label{lem:embeddings_maps_Z0_to_orbits}
    Let $X$ be an indecomposable cycle set, take a point $x_0 \in X$, and let $A$ be an $X$-graded $\G(X)$-module with $\gcd(|\G(X)|,|A_x|) = 1$. Let $Y = X \otimes_{\Phi}A$ be a twisted extension where $Y$ is not necessarily indecomposable. Furthermore, let $B \leq A$ be an $X$-graded submodule of $A$ and let $Z = X \otimes_{\Gamma}B$ be another extension where $Z$ is not necessarily indecomposable. Let $f: Z \hookrightarrow Y$ be an embedding of cycle sets such that $f(x,a) = (x,a+\delta_x)$ for a fixed family of $\delta_x \in A_x$ ($x \in X$) and suppose that $\mathcal{O}_{\G(Y)_{\pi}}(x_0,0) = Y_0$. Then $f(Z_0)$ is a $\G(Y)_{\pi}$-orbit in $Y$ and $\Gamma = \Phi$.
\end{lem}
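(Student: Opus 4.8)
The plan is to reduce \emph{both} assertions to the single statement that the family $\delta=(\delta_x)_{x\in X}$ is $\G(X)$-equivariant, i.e.\ $\delta_{\lambda_g(x)}=g\cdot\delta_x$ for all $g\in\G(X)$, $x\in X$; everything else is bookkeeping. First I would unwind the hypothesis to pin down the action of $\G(Y)_\pi$. Because each generator $\sigma_{(x,a)}$ of $\G(Y)$ is independent of $a$, \cref{eq:cycle_operation_by_phi} shows that every element of $\G(Y)$ acts on the fibres of $\pr$ by an affine map whose linear part is the module action of the corresponding element of $\G(X)$; thus $g\cdot(x,b)=(\lambda_g(x),\,g\cdot b+\beta_{g,x})$ for suitable $\beta_{g,x}\in A_{\lambda_g(x)}$. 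The assumption $\Orb_{\G(Y)_\pi}(x_0,0)=Y_0$ makes $Y_0$ invariant under $\G(Y)_\pi$, which forces $\beta_{g,x}=0$ there, so $\G(Y)_\pi$ acts by the standard rule $g\cdot(x,b)=(\lambda_g(x),g\cdot b)$. Granting equivariance of $\delta$, the set $f(Z_0)=\{(x,\delta_x)\}$ is then $\G(Y)_\pi$-invariant, and since $\G(Y)_\pi$ covers the transitive action of $\G(X)$ on $X$ it is a single orbit; that is the first assertion.

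For the second assertion I would write out that $f$ is a homomorphism: expanding $f((x,a)\ast(y,b))=f(x,a)\ast f(y,b)$ via \cref{eq:cycle_operation_by_phi}, cancelling the common summand and applying $\lambda_x$ gives the cocycle identity
\[
\Phi(x,y)-\Gamma(x,y)=\lambda_x\cdot\delta_{x\ast y}-\delta_y\qquad(x,y\in X),
\]
whose right-hand side vanishes as soon as $\delta$ is equivariant; this yields $\Gamma=\Phi$. Thus it remains only to prove equivariance of $\delta$, equivalently that $D:=\Phi-\Gamma$ is identically $0$. Since $\Phi$ and $\Gamma$ both satisfy \cref{eq:phi_equivariance}, so does $D$, and the identity above exhibits $D$ as the equivariant ``coboundary'' $(x,y)\mapsto\lambda_x\cdot\delta_{x\ast y}-\delta_y$.

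To get started on this I would record that $f(Z)$ is $\G(Y)$-invariant: its defining generators $\sigma_{(x,0)}$ are exactly the generators of $\G(Y)$, and $f$ being a homomorphism keeps $f(Z)$ stable under each of them, so $\G(Y)_\pi$ acts on $f(Z)$ by the standard rule. Applying \cref{lem:GY_pi_fixes_a_section} to $Z$ (legitimate since $\gcd(|\G(X)|,|A_x|)=1$) and transporting along the isomorphism $f(Z)\cong Z$ produces a $\G(Y)_\pi$-invariant section of $f(Z)$, i.e.\ a family $a=(a_x)$ with $a_x\in B_x$ such that $x\mapsto\delta_x-a_x$ is equivariant. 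Consequently $D$ is the coboundary of the cochain $a$ taking values in the \emph{submodule} $B$, where again $\gcd(|\G(X)|,|B|)=1$.

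The hard part is the final step: an equivariant coboundary valued in $B$ must vanish, and this is exactly where coprimality is indispensable. The naive Maschke argument—average $a$ over $\G(X)$ (which makes sense, $|\G(X)|$ being invertible modulo $|B|$) to get an equivariant representative and conclude $D=\partial a=\partial a^{\ast}=0$—does not apply verbatim, because $\G(X)$ does \emph{not} act on $X$ by cycle-set automorphisms, so $\partial$ fails to be $\G(X)$-equivariant. I expect this to be the main obstacle. To circumvent it I would pass to the base point, where by \cref{eq:equivariance_for_pi1} equivariance of $D$ collapses to $\Pi_1(X,x_0)$-equivariance of the slice $D^0=D(x_0,-)$, and then combine the norm element of a cyclic subgroup $\langle\lambda_{x_0}\rangle$ acting on this slice with coprimality: the resulting obstruction is simultaneously $|B|$-torsion and annihilated by a multiple of $|\G(X)|$, hence zero. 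The fixed-point input (\cref{lem:coprime_affine_actions_have_fixed_points} through \cref{lem:GY_pi_fixes_a_section}) together with \cref{pro:lambda_on_socle_is_conjugation}, which identifies the $\lambda$-action with conjugation on the relevant socle elements, are the tools I would use to make this norm computation go through and force $D=0$, whence $\delta$ is equivariant and both conclusions follow.
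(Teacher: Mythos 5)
Your reduction is the same as the paper's: you derive the identity $\Phi(x,y)-\Gamma(x,y)=\lambda_x\cdot\delta_{x\ast y}-\delta_y$ from the homomorphism property, observe that both sides are $\G(X)$-equivariant, and correctly see that everything comes down to showing this common value is zero, i.e.\ that $\delta$ is equivariant; the first assertion and $\Gamma=\Phi$ then follow as you say. The detour through \cref{lem:GY_pi_fixes_a_section} applied to $Z$ is superfluous, though harmless: the expression $\lambda_x\cdot\delta_{x\ast y}-\delta_y$ already takes values in $A$ with $\gcd(|\G(X)|,|A|)=1$, so nothing is gained by rewriting it as a coboundary valued in the submodule $B$.

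The genuine gap is the step you yourself flag as ``the main obstacle'': proving that the equivariant expression $D(x,y)=\lambda_x\cdot\delta_{x\ast y}-\delta_y$ vanishes. You only sketch a strategy (a ``norm element of a cyclic subgroup $\langle\lambda_{x_0}\rangle$'', an obstruction killed by both $|B|$-torsion and a multiple of $|\G(X)|$), and the tools you name for it, \cref{pro:lambda_on_socle_is_conjugation} and \cref{lem:coprime_affine_actions_have_fixed_points}, are not the ones that make it work. The paper's argument here is a concrete telescoping computation: substituting $g=\lambda_x^{-1}$ into the equivariance of $D$ yields $\lambda_x\cdot\delta_{x\ast y}-\delta_y=\lambda_{2x}\cdot\delta_{2x\ast y}-\lambda_x\cdot\delta_{x\ast y}$, where $\lambda_{kx}=k\cdot\lambda_x$ is the \emph{additive} multiple in the brace $\G(X)$ and satisfies $\lambda_{kx}=\lambda_x\lambda_{\Sq(x)}\cdots\lambda_{\Sq^{k-1}(x)}$; iterating and summing over $k=0,\dots,d-1$, with $d$ the exponent of $\G(X)^+$, telescopes to $d\cdot(\lambda_x\cdot\delta_{x\ast y}-\delta_y)=0$, and coprimality of $d$ with $|A|$ finishes. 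Note that the relevant cyclic structure is the additive one in the brace, not the multiplicative subgroup $\langle\lambda_{x_0}\rangle$: the powers $\lambda_x^k$ do not produce a telescoping sum, so the ``norm'' you propose would have to be taken over additive multiples, which is exactly the computation you have not carried out. As written, the proposal does not establish the lemma.
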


\begin{proof}
    The homomorphism property reads
    \begin{align*}
        f((x,a) \ast (y,b)) & = f(x,a) \ast f(y,b) \\
        (x \ast y,\lambda_x^{-1} \cdot (b + \Gamma(x,y)) + \delta_{x \ast y}) & = (x \ast y, \lambda_x^{-1} \cdot (b + \delta_y + \Phi(x,y))) \\
        \Rightarrow \lambda_x^{-1} \cdot (\Gamma(x,y) + \delta_{x \ast y}) & = \lambda_x^{-1} \cdot (\delta_y + \Phi(x,y)) \\
        \lambda_x \cdot \delta_{x \ast y} - \delta_y & = \Phi(x,y) - \Gamma(x,y). \tag{$\dagger$}
    \end{align*}
    As the right side is $\G(X)$-equivariant, the left side is, either, meaning that
    \begin{equation}\label{eq:equivariance_of_delta_term}
        \lambda_x \cdot \delta_{x \ast y} - \delta_y = \lambda_g^{-1} \cdot (\delta_{\lambda_g(x) \ast \lambda_g(y)} - \delta_{\lambda_g(y)}).
    \end{equation}
    Taking $\lambda_g = \lambda_x^{-1}$ in \cref{eq:equivariance_of_delta_term}, we obtain:
    \begin{align*}
    \lambda_x \cdot \delta_{x \ast y} - \delta_y & = \lambda_x \cdot (\lambda_{\lambda_x^{-1}(x)} \cdot \delta_{\lambda_x^{-1}(x) \ast \lambda_x^{-1}(y)}  - \delta_{\lambda_x^{-1}(y)}) \\
    & = \lambda_x \cdot (\lambda_{\Sq(x)} \cdot \delta_{(x \ast x) \ast (x \ast y)} - \delta_{x \ast y}) \\
    & = \lambda_x \lambda_{\Sq(x)} \cdot \delta_{(x \ast x) \ast (x \ast y)} - \lambda_x \cdot \delta_{x \ast y} \\
    & = \lambda_{2x} \cdot \delta_{2x \ast y} - \lambda_x \cdot \delta_{x \ast y},
    \end{align*}
    where we write $kx \ast y = \lambda_{kx}^{-1}$ with $\lambda_{kx} = k \cdot \lambda_x$. By induction over $k$, using $g + h = g \lambda_g^{-1}(h)$ and the fact that $\Sq(x) = \lambda_x^{-1}(x)$, one shows that
    \[
    \lambda_{kx} = \lambda_x \lambda_{\Sq(x)} \ldots \lambda_{\Sq^{k-1}(x)}.
    \]
    An iterated application of the argument above proves for $k \geq 0$ that:
    \begin{equation} \label{eq:iteration_for_delta}
    \lambda_x \cdot \delta_{x \ast y} - \delta_y = \lambda_{(k+1)x} \cdot \delta_{(k+1)x \ast y} - \lambda_{kx} \cdot \delta_{kx \ast y}.
    \end{equation}
    Let $d$ be the exponent of $\G(X)^+$, which is also the smallest positive integer such that $k \cdot \lambda_x = 0$ for all $x \in X$. With this $d$, we get, using \cref{eq:iteration_for_delta}:
    \begin{align*}
    d \cdot (\lambda_x \cdot \delta_{x \ast y} - \delta_y) & = \sum_{k=0}^{d-1} \lambda_{(k+1)x} \cdot \delta_{(k+1)x \ast y} - \lambda_{kx} \cdot \delta_{kx \ast y} \\
    & =\lambda_{dx} \cdot \delta_{dx \ast y} - \lambda_{0x} \cdot \delta_{0x \ast y} \\
    & = \delta_y - \delta_y = 0.
    \end{align*}
    Now $d$ divides $|\G(X)|$ (see also \cite[Proposition 2.7]{Feingesicht}), therefore $\gcd(d,|A|) = 0$ from which we infer that $\lambda_x \cdot \delta_{x \ast y} - \delta_y = 0$ for all $x,y \in X$, resp. $\delta_{\lambda_x^{-1}(y)} = \lambda_x^{-1} \cdot \delta_y$. As the $\lambda_x$ ($x \in X$) generate $\G(X)$, it follows that $\delta_{\lambda_g(y)} = g \cdot \delta_y$ holds for all $g \in \G(X)$, $y \in X$, therefore the set
    \[
    \{ (x, \delta_x) : x \in X \} \subseteq X \otimes_{\Phi} A
    \]
    is a $\G(Y)_{\pi}$-orbit in $Y$. The observation that $f(x,0) = (x,\delta_x)$ for $(x,0) \in Z_0$ concludes the proof that $f(Z_0)$ is a $\G(Y)_{\pi}$-orbit in $Y$.

    Furthermore, $\lambda_x \cdot \delta_{x \ast y} = \delta_{\lambda_x(x \ast y)} = \delta_y$, so $(\dagger)$ implies that $\Phi(x,y) - \Gamma(x,y) = 0$, i.e. $\Phi = \Gamma$.
\end{proof}

\begin{proof}[Proof of \cref{pro:generating_kernel_of_Gpr}]
    By \cref{thm:coprime_extensions_are_twisted_extensions}, $\pr$ is isomorphic to $\pr^{\prime}: Y^{\prime} = X \otimes_{\Phi^{\prime}}A^{\prime} \twoheadrightarrow X$ for some $\G(X)$-equivariant map $\Phi^{\prime}: X \times X \to A^{\prime}$. Note that the structure of the underlying $X$-graded $\G(X)$-module is determined, up to isomorphism, by the $\lambda$-action of $\G(Y)_{\pi}$ on the fibers $\mathcal{F}_x = A_x$ resp $A_x^{\prime}$ (see the proof of \cref{thm:coprime_extensions_are_twisted_extensions}), so we may assume that $A^{\prime} = A$.
    
    It is sufficient to show that $\G(Y)_{\pi}$ consists of the permutations $(x,a) \mapsto (\lambda_g(x),g \cdot a)$ with $g \in \G(X)$ as then it follows that $K$ is generated by the maps $(y,b) \mapsto (y,b+ \Phi(x,y))$ for $x \in X$.
    
    This is done by showing that $Y_0 \subseteq Y$ is a $\G(Y)_{\pi}$-orbit, as then, the $\G(X)$-module structure on $A$ forces the elements of $\G(Y)_{\pi}$ to act as $(x,a) \mapsto (\lambda_g(x),g \cdot a)$.
    
    Note first that \cref{lem:GY_pi_fixes_a_section} tells us there is a $\G(Y)_{\pi}$-orbit $\mathfrak{O} \subseteq Y$ of the form $\mathfrak{O} = \{ (x,a_x): x \in X\}$, so by \cref{cor:sections_can_be_chosen_as_zero_sections} to \cref{thm:coprime_extensions_are_twisted_extensions}, there is a twisted extension $\pr': Y' = X \otimes_{\Gamma} A \twoheadrightarrow X$ and an equivalence $g: Y \to Y'$ between the extensions $\pr, \pr^{\prime}$ such that $g(x,a_x) = (x,0)$ for all $x \in X$. As $g$ respects the regular action of $A$ on $A^{\mathrm{sc}}$, $g$ must be of the form $g(x,a) = (x,a-a_x)$. Now \cref{lem:embeddings_maps_Z0_to_orbits} tells us that $\Gamma = \Phi$. On the other hand, the same lemma says that $f(Y_0)$ is a $\G(Y')_{\pi}$-orbit in $Y'$. By the isomorphism $\G(g): \G(Y) \overset{\sim}{\to} \G(Y')$, it follows that $Y_0$ is a $\G(Y)_{\pi}$-orbit, which is what we wanted to show!

    As $K \leq \Soc(\G(Y))$, it is clear that $K$ is trivial. From \cref{pro:lambda_on_socle_is_conjugation} it follows that $\lambda_{s(g)}(f) = {}^{s(g)}f$ for $g \in \G(X)$, $f \in K$. By our proof, $\lambda_{s(g)}(x,a) = (\lambda_g(x), g \cdot a)$ holds for $(x,a) \in Y$. Therefore, we calculate
    \begin{align*}
    \lambda_{\lambda_{s(g)}(f)}(x,a) & = \lambda_{s(g)} ( \lambda_f (\lambda_g^{-1}(x),g^{-1} \cdot a) ) \\
    & = \lambda_{s(g)} (\lambda_g^{-1}(x), g^{-1} \cdot a + f(\lambda_g^{-1}(x))) \\
    & = (x, a + g \cdot f(\lambda_g^{-1}(x))) \\
    & = \lambda_{g \cdot f} (x,a).
    \end{align*}
\end{proof}

We now reformulate this result in terms of parallel extensions. For a map $\Gamma: X \times X \to B$, where $B$ is an (ungraded) group, we define the element $\Gamma_x \in B^X$ by $\Gamma_x(y) = \Gamma(x,y)$.

\begin{cor} \label{cor:generating_kernel_of_parallel_extension}
    Let $X$ be indecomposable, and let $\pi = \pi(|\G(X)|)$. Consider a parallel extension $\pr: Y = X \times_{\Gamma} B \twoheadrightarrow X$ where $\gcd(|\G(X)|,|B|) = 1$, and where $Y$ is indecomposable, then there is a semidirect product decomposition
    \[
    \G(Y) = \G(Y)_{\pi} \ltimes K
    \]
    where $\G(Y)_{\pi} = \{ (x,a) \mapsto (\lambda_g(x), a): g \in \G(X)\} \cong \G(X)$ and $K = \langle \Gamma_x : x \in X \rangle \leq B^X$ is a trivial brace. Furthermore for $f \in K$ acting by $\lambda_f(x,a) = (x,a+f(x))$ and $g \in \G(X)$, we have $\lambda_{s(g)}(f) = {}^gf$, where $s: \G(X) \to \G(Y)$ is the unique section of $\G(\pr):\G(Y) \twoheadrightarrow \G(X)$ with $s(\G(X)) = \G(Y)_{\pi}$ and where ${}^gf(x) = f(\lambda_g^{-1}(x))$.
\end{cor}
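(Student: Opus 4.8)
The plan is to obtain this statement as a direct specialization of \cref{pro:generating_kernel_of_Gpr} to the permutation-module case, so that the only real work is to set up the correspondence between a parallel extension and a twisted extension correctly and then translate each of the conclusions. No new argument is needed; the content is bookkeeping.

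First I would recall, as recorded in the discussion preceding \cref{eq:cycle_operation_parallel_extension}, that a parallel extension $X \times_{\Gamma} B$ is nothing but a twisted extension $X \otimes_{\Phi} A$ in which $A = B^X$ is the permutation module, graded by $A_y = B \cdot e_y$ (functions supported at $y$), with the $\G(X)$-action $(g \cdot f)(z) = f(\lambda_g^{-1}(z))$. Under this identification the defining data correspond via $\Phi(x,y) = \Gamma(x,y) \cdot e_y \in A_y$: the grading condition \cref{eq:phixy_in_ay} is then immediate, and using $g \cdot e_y = e_{\lambda_g(y)}$ one checks that the equivariance \cref{eq:phi_equivariance} for $\Phi$ reduces exactly to the invariance \cref{eq:equivariance_for_parallel_extensions} for $\Gamma$. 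The scattering $A^{\sca}$ is identified with $X \times B$ through $(y, b \cdot e_y) \leftrightarrow (y,b)$, and under this identification the twisted operation \cref{eq:cycle_operation_by_phi} collapses to the parallel operation \cref{eq:cycle_operation_parallel_extension}. Next I would verify the hypothesis of the theorem: since each graded component satisfies $A_x \cong B$, we have $|A_x| = |B|$, so the assumption $\gcd(|\G(X)|, |B|) = 1$ yields $\gcd(|\G(X)|, |A_x|) = 1$ for all $x$. Together with the standing hypothesis that $Y$ is indecomposable, this lets me apply \cref{pro:generating_kernel_of_Gpr} to $Y = X \otimes_{\Phi} A$.

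Finally I would translate the output. By definition $\Phi_x = \sum_{y} \Phi(x,y) = \sum_y \Gamma(x,y) \cdot e_y$, which under $A^{\sca} \cong X \times B$ is precisely the function $\Gamma_x \in B^X$ with $\Gamma_x(y) = \Gamma(x,y)$; hence $K = \langle \Phi_x : x \in X\rangle = \langle \Gamma_x : x \in X\rangle$, a trivial brace inherited verbatim from the theorem. The action of $\G(Y)_{\pi}$, which in the theorem reads $(x,a) \mapsto (\lambda_g(x), g \cdot a)$, simplifies via $g \cdot (x,b) = (\lambda_g(x), b)$ to the asserted form $(x,a) \mapsto (\lambda_g(x), a)$. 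The relation $\lambda_{s(g)}(f) = g \cdot f$ becomes $\lambda_{s(g)}(f) = {}^g f$ with $({}^g f)(x) = f(\lambda_g^{-1}(x))$, since for the permutation module $g \cdot f$ is exactly this map. As each assertion of the corollary matches a corresponding assertion of \cref{pro:generating_kernel_of_Gpr} under the identification, this completes the proof.

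The main point that requires care, rather than a genuine obstacle, is the consistency of the permutation-module grading and action with the parallel-extension data: specifically the identities $g \cdot e_y = e_{\lambda_g(y)}$ and the collapse of \cref{eq:cycle_operation_by_phi} to \cref{eq:cycle_operation_parallel_extension}. An error in that bookkeeping would misidentify the generators of $K$ or the $\lambda$-action of the section, so I would take care to check the index conventions there explicitly.
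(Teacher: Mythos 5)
Your proposal is correct and is exactly the argument the paper intends: the corollary is stated as a direct reformulation of \cref{pro:generating_kernel_of_Gpr} in the permutation-module case, using the identification $A \cong B^X$, $A_y = B\cdot e_y$, $\Phi(x,y) = \Gamma(x,y)\cdot e_y$ already set up before \cref{eq:cycle_operation_parallel_extension}. All of your bookkeeping checks ($g \cdot e_y = e_{\lambda_g(y)}$, $\Phi_x \leftrightarrow \Gamma_x$, the collapse of the twisted operation to the parallel one) are accurate.
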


We can now characterize the cases in which $X \otimes_{\Phi} A$ is indecomposable:

\begin{thm} \label{thm:indecomposability_criterion}
    Suppose $X$ is an indecomposable cycle set and $A$ is an $X$-graded $\G(X)$-module with $\gcd(|\G(X)|,|A_x|) = 1$. Then a twisted extension $X \otimes_{\Phi}A$ is indecomposable if and only if for any $x_0 \in X$, we have
    \begin{equation} \label{eq:Ax0_generated_by_phixx0}
    A_{x_0} = \left\langle \Phi(x,x_0): x \in X \right\rangle.
    \end{equation}
\end{thm}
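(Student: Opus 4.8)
The plan is to read indecomposability of $Y = X\otimes_{\Phi}A$ off from transitivity of $\G(Y)$, after realizing $\G(Y)$ concretely inside an affine (holomorph-type) group acting on $A^{\sca}$. Write $\sigma_x=\lambda_x^{-1}$, let $t_f$ ($f\in A$) be the translation $(x,b)\mapsto(x,b+f_x)$, and let $\ell_g$ ($g\in\G(X)$) be the map $(x,a)\mapsto(\lambda_g(x),g\cdot a)$; together these generate a group $\mathcal H=\{\ell_g : g\in\G(X)\}\ltimes A$. Since $\Phi_x:=\sum_{y}\Phi(x,y)$ lies in $A$ by \cref{eq:phixy_in_ay}, the cycle-set permutation $\sigma_{(x,a)}\colon(y,b)\mapsto(x\ast y,\lambda_x^{-1}\cdot(b+\Phi(x,y)))$ factors as $\ell_{\sigma_x}t_{\Phi_x}$; being independent of $a$, it gives $\G(Y)=\langle \ell_{\sigma_x}t_{\Phi_x} : x\in X\rangle\le\mathcal H$. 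As \cref{eq:phi_equivariance} reads $g\cdot\Phi_x=\Phi_{\lambda_g(x)}$, the subgroup $M:=\langle\Phi_x : x\in X\rangle\le A$ is $\G(X)$-invariant, so in fact $\G(Y)\le\{\ell_g\}\ltimes M$. The projection $\mathcal H\to\{\ell_g\}$ restricts to $\G(\pr)$, whence $K:=\ker\G(\pr)=\G(Y)\cap A\subseteq M$; as $|A|$ is coprime to $|\G(X)|$, the group $K$ is the full $\pi'$-part of $\G(Y)$ (with $\pi=\pi(|\G(X)|)$) and $\G(Y)=K\rtimes\G(Y)_{\pi}$, with $K\subseteq\Soc(\G(Y))$ by \cref{pro:kernel_of_extensions_is_in_socle}.

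Next I would compute the relevant orbit. By \cref{lem:GY_pi_fixes_a_section} there is a $\G(Y)_{\pi}$-invariant section $\mathcal O=\{(x,\delta_x)\}$ with $\delta_x\in A_x$; since $\G(Y)_{\pi}\cong\G(X)$ acts on $\mathcal O$ as $\G(X)$ acts on the indecomposable $X$, it is transitive on $\mathcal O$. Letting $K$ act afterwards, the $\G(Y)$-orbit of $(x_0,\delta_{x_0})\in\mathcal O$ is exactly $\bigsqcup_{x}\{x\}\times(\delta_x+K_x)$, where $K_x\le A_x$ is the image of $K$ under $A\twoheadrightarrow A_x$. Hence $Y$ is indecomposable iff this orbit is all of $Y$ iff $K_x=A_x$ for every $x$; and since $K$ is $\G(X)$-invariant we have $K_{\lambda_g(x_0)}=g\cdot K_{x_0}$, so it suffices to check $K_{x_0}=A_{x_0}$ at one (hence any) base point $x_0$.

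The crux — and the step I expect to be the main obstacle — is to identify $K_{x_0}$ with $\langle\Phi(x,x_0):x\in X\rangle$, i.e. to prove $K=M$ (then $K_{x_0}=M_{x_0}=\langle\Phi(x,x_0):x\rangle$, the $x_0$-component of $\Phi_x$ being $\Phi(x,x_0)$). The inclusion $K\subseteq M$ is immediate from the first paragraph. For $M\subseteq K$ one must show each $t_{\Phi_x}$ lies in $\G(Y)$, equivalently that $\G(Y)_{\pi}$ can be taken to be $\{\ell_g\}$ itself; here the coprimality of $|\G(X)|$ with the fibre sizes is essential, and \cref{pro:generating_kernel_of_Gpr} cannot be invoked, as it presupposes the indecomposability of $Y$ under scrutiny. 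Concretely I would split $\delta$, using that $A$ is a coprime (hence semisimple) $\G(X)$-module, into an $L$-fixed summand and a summand in $M$; the $L$-fixed summand is absorbed by a translation commuting with $\G(Y)_{\pi}$, reducing to $\delta\in M$. Conjugating $\G(Y)$ by $t_{\delta}$ then forces $\{\ell_g\}\subseteq\G(Y)$ without changing $K$, so that the conjugate parameter $\Phi''$ makes $\Phi-\Phi''$ equivariant \emph{modulo} $K$. Passing to the $\G(X)$-module $A/K$ (again of order prime to the exponent of $\G(X)^+$) and running the exponent/averaging computation of \cref{lem:embeddings_maps_Z0_to_orbits} forces the equivariance defect $\delta-\lambda_x\cdot\delta$ into $K$ for all $x$, whence $\Phi_x=\Phi''_x-(\delta-\lambda_x\cdot\delta)\in K$, giving $M\subseteq K$ and so $K=M$.

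Finally I would assemble the pieces: combining $K=M$ with the orbit criterion, $Y$ is indecomposable iff $A_{x_0}=K_{x_0}=\langle\Phi(x,x_0):x\in X\rangle$, which is \cref{eq:Ax0_generated_by_phixx0}. The relation $g\cdot\Phi_x=\Phi_{\lambda_g(x)}$ together with transitivity of $\G(X)$ on $X$ shows the condition holds at one $x_0$ iff at every $x_0$, justifying the wording ``for any $x_0$''. I expect the bookkeeping in the averaging step — tracking components modulo the non-homogeneous submodule $K$ — to be the most delicate part of the write-up, while the remaining arguments are routine group-action theory.
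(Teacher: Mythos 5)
Your proposal is correct in substance, and the hard direction takes a genuinely different route from the paper. The ``only if'' direction is essentially the paper's: realize $\G(Y)$ inside $\{\ell_g\}\ltimes A$, note $K=\G(Y)\cap A\leq M=\langle\Phi_x\rangle$, and read indecomposability off the orbit $\bigcup_x\{x\}\times(\delta_x+K_x)$ of a point on the invariant section; you are right that \cref{pro:generating_kernel_of_Gpr} cannot be cited here, and your direct rederivation of the decomposition avoids that circularity. For the ``if'' direction the paper does \emph{not} prove $K=M$ head-on: it restricts to the sub-cycle set $Y'=\mathcal{O}_{\G(Y)}(x_0,a_{x_0})$, shows $Y'\twoheadrightarrow X$ is an indecomposable coprime extension, identifies it as $X\otimes_\Gamma B$ with $\Gamma=\Phi$ via \cref{cor:sections_can_be_chosen_as_zero_sections} and \cref{lem:embeddings_maps_Z0_to_orbits}, and then bootstraps by applying the already-proved forward direction to $Y'$ to force $B=A$. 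Your route instead conjugates $\G(Y)_\pi$ into the standard copy $\{\ell_g\}$ by $t_{\pm\delta}$, observes that the conjugated generators $\ell_{\sigma_x}t_{\Phi_x+\delta-\lambda_x\cdot\delta}$ force $\Phi_x+\delta-\lambda_x\cdot\delta\in K$, and then kills the coboundary $\lambda_x\cdot\delta-\delta$ modulo $K$ by the exponent/telescoping computation; this is more self-contained (no second appeal to \cref{thm:coprime_extensions_are_twisted_extensions}) and isolates the group-cohomological content ($H^1$ vanishes for coprime modules) more transparently. Two remarks on your write-up: the difficulty you anticipate with ``tracking components modulo the non-homogeneous submodule $K$'' does not arise, because the telescoping argument of \cref{lem:embeddings_maps_Z0_to_orbits} runs verbatim on whole tuples in the quotient $\G(X)$-module $A/K$ — one only needs $g\cdot\bar\Phi_x=\bar\Phi_{\lambda_g(x)}$, never the grading. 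On the other hand, the intermediate step ``split $\delta$ into an $L$-fixed summand and a summand in $M$ by semisimplicity'' is both unjustified as stated (there is no reason for the non-fixed part of $\delta$ to lie in $M$) and unnecessary: the clean identity $\Phi_x=\lambda_x\cdot k_x+(\lambda_x\cdot\delta-\delta)$ with $k_x\in K$ already reduces everything to showing $\lambda_x\cdot\delta-\delta\in K$, which your final averaging step delivers. With that detour deleted, the argument closes.
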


\begin{proof}
    Assume first that $Y = X \otimes_{\Phi}A$ is indecomposable, then \cref{pro:generating_kernel_of_Gpr} implies that $\G(Y) = \G(X)_{\pi} \ltimes K$ where $K = \left\langle \Phi_x : x \in X \right\rangle$. From the proof of \cref{pro:generating_kernel_of_Gpr} it follows that 
    \[
    \mathcal{O}_{\G(X)_{\pi}}(x_0,0)\cap (\{ x_0\} \times A_{x_0}) = \{(x_0,0)\}.
    \]
    Therefore, $K$ acts transitively on $\{ x_0\} \times A_{x_0}$ which is the same as saying that $\left\langle \Phi_x(x_0) : x \in X \right\rangle = A_{x_0}$.

    Suppose now that \cref{eq:Ax0_generated_by_phixx0} is satisfied. By \cref{lem:GY_pi_fixes_a_section}, $Y$ contains a $\G(Y)_{\pi}$-orbit of the form $\mathfrak{O} = \{ (x,a_x): x \in X \} \subseteq Y$. Fixing this orbit and the elements $a_x$, we let $Y^{\prime} = \mathcal{O}_{\G(Y)}(x_0,a_{x_0})$ for some $x_0 \in X$ and consider the map $\pr' = \pr|_{Y'}: Y' \to X$. As $X$ is indecomposable and $\G(\pr)$ is surjective, it follows that $\pr'$ is indeed surjective. As the permutations $\sigma_{(x,a)}$ for $(x,a) \in X \otimes_{\Phi}A$ only depend on the $X$-coordinate, it follows that $\G(Y') = \G(Y)$, so $Y' = \mathcal{O}_{\G(Y')}(x_0,a_{x_0})$ which implies that $Y'$ is indecomposable.

    Furthermore, $\pr': Y' \twoheadrightarrow X$ is a coprime extension: let $\pi = \pi(|\G(X)|)$ and let $\pi'$ be the set of primes not in $\pi$, then
    \[
    \pi \left( \frac{|Y'|}{|X|} \right) =  \pi \left( \frac{|Y'|}{|\mathfrak{O}|} \right) = \pi \left( \frac{|\G(Y)|}{|\G(Y)_{\pi}|} \cdot \frac{|\Pi_1(Y,y_0) \cap \G(Y)_{\pi}|}{|\Pi_1(Y,y_0)|} \right) \subseteq \pi \left( \frac{|\G(Y)|}{|\G(Y)_{\pi}|}\right) = \pi'.
    \]
    It follows that there is an isomorphism $X \otimes_{\Gamma} B \cong Y'$ for some submodule $B \leq A$ as $Y' \subseteq X \otimes_{\Phi} A$; the graded components $B_x$ of $B$ are the regular subgroups induced by $\ker(\G(\pr')) \leq A$ on the fibers $\pr'^{-1}(x)$. Furthermore, $\mathfrak{O} = \{(x,a_x): x \in X \}$ is a $\G(Y')_{\pi}$-orbit, so by \cref{cor:sections_can_be_chosen_as_zero_sections}, we can identify the embedding $Y' \hookrightarrow Y$ with an embedding $X \otimes_{\Gamma} B \hookrightarrow X \otimes_{\Phi} A$; $(x,a) \mapsto (x,a + a_x)$. \cref{lem:embeddings_maps_Z0_to_orbits} now implies $\Gamma = \Phi$. As $Y'$ is indecomposable, we conclude from the first part of the theorem that
    \[
    B_{x_0} = \left\langle \Phi(x,x_0): x \in X \right\rangle = A_{x_0}
    \]
    for each $x_0 \in X$. Therefore, $B = A$ which implies that $Y = Y'$ from which we infer that $Y$ is indecomposable.
\end{proof}

The corresponding corollary for parallel extensions is stated as follows:

\begin{cor} \label{cor:indecomposability_criterion}
    Suppose that $X$ is an indecomposable cycle set and $B$ is a group that satisfies $\gcd(|\G(X)|,|B|) = 1$. Then a parallel extension $X \times_{\Gamma}B$ is indecomposable if and only if 
    \begin{equation} \label{eq:B_generated_by_gammax}
    B = \left\langle \Gamma(x,y): x \in X \right\rangle.
    \end{equation}
\end{cor}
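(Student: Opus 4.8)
The plan is to deduce \cref{cor:indecomposability_criterion} directly from \cref{thm:indecomposability_criterion} by recognizing a parallel extension as the special case of a twisted extension whose module is a permutation module. Concretely, I would take $A = B^X$ equipped with the $\G(X)$-action $(g \cdot f)(x) = f(\lambda_g^{-1}(x))$ and the grading in which $A_{x_0}$ is the copy of $B$ supported at the coordinate $x_0$, so that $A^{\sca} = X \times B$ and the twisted operation \eqref{eq:cycle_operation_by_phi} specializes to the parallel operation \eqref{eq:cycle_operation_parallel_extension}. Under this identification the $\Gamma$-data corresponds to a map $\Phi$ with $\Phi(x,x_0) \in A_{x_0}$ of value $\Gamma(x,x_0) \in B$, the invariance \eqref{eq:equivariance_for_parallel_extensions} of $\Gamma$ is exactly the equivariance \eqref{eq:phi_equivariance} of $\Phi$, and the hypothesis $\gcd(|\G(X)|,|B|)=1$ becomes $\gcd(|\G(X)|,|A_{x_0}|)=1$ for all $x_0$. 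Thus $X \times_{\Gamma} B$ is literally the twisted extension $X \otimes_{\Phi} A$, and the coprimality assumption of \cref{thm:indecomposability_criterion} is satisfied.

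Second, I would feed this into \cref{thm:indecomposability_criterion}, which states that $X \otimes_{\Phi} A$ is indecomposable if and only if $A_{x_0} = \langle \Phi(x,x_0) : x \in X \rangle$ for any $x_0 \in X$. Transporting the right-hand side through the isomorphism $A_{x_0} \cong B$ and the correspondence $\Phi(x,x_0) \leftrightarrow \Gamma(x,x_0)$ rewrites this condition as $B = \langle \Gamma(x,x_0) : x \in X \rangle$, which is \eqref{eq:B_generated_by_gammax} with the second argument fixed to $x_0$.

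Finally, I would check that the generated subgroup $\langle \Gamma(x,x_0) : x \in X \rangle$ does not depend on the chosen base point $x_0$, so that the criterion attains the base-point-free form \eqref{eq:B_generated_by_gammax}. Here indecomposability of $X$ enters: it guarantees that $\G(X)$ acts transitively on $X$, so given $x_0, x_0'$ with $\lambda_g(x_0) = x_0'$ the map $x \mapsto \lambda_g(x)$ permutes $X$, and the invariance \eqref{eq:equivariance_for_parallel_extensions} yields $\Gamma(\lambda_g(x), x_0') = \Gamma(\lambda_g(x), \lambda_g(x_0)) = \Gamma(x, x_0)$, whence the two generating sets, and hence the two subgroups, coincide. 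I expect no serious obstacle in this argument; the only steps demanding care are the bookkeeping of the permutation-module grading and the correspondence $\Phi \leftrightarrow \Gamma$ in the first step, and the base-point independence in the last step, which is precisely where the hypothesis that $X$ is indecomposable is used.
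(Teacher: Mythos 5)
Your proposal is correct and is exactly the specialization the paper intends: the corollary is stated without proof immediately after \cref{thm:indecomposability_criterion}, the implicit argument being precisely your identification of $X \times_{\Gamma} B$ with $X \otimes_{\Phi} A$ for the permutation module $A = B^X$. Your additional check that the generated subgroup is independent of the base point (via transitivity of $\G(X)$ and the invariance of $\Gamma$) is a worthwhile detail the paper glosses over, and it also reconciles the base-point-free form of \eqref{eq:B_generated_by_gammax} with the fixed-$x_0$ form in the theorem.
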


\section{Indecomposable cycle sets of size \texorpdfstring{$pqr$}{pqr}} \label{sec:size_pqr}

The goal of this section is to give a full description of indecomposable cycle sets of size $pqr$. We will rely on the following result of Cedó and Okniński

\begin{thm}[Cedó, Okniński] \label{thm:cedo_okninski}
    Let $X$ be an indecomposable cycle set of squarefree size. Then $\mpl(X) < \infty$. Furthermore, $\pi(|\G(X)|) = \pi(|X|)$.
\end{thm}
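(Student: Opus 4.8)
The plan is to prove both assertions simultaneously by strong induction on $|X|$, using the retraction as the inductive step; the only genuinely hard input will be isolated as a single claim about retractability.

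The inclusion $\pi(|X|)\subseteq\pi(|\G(X)|)$ is free: indecomposability means $\G(X)$ acts transitively on $X$, so $|X|$ divides $|\G(X)|$ by orbit--stabilizer. The content is the reverse inclusion together with $\mpl(X)<\infty$. For the inductive step, note that the retraction $\ret\colon X\twoheadrightarrow X^{(1)}$ is an extension in the sense of \cref{sec:coprime_extensions}, since $\ret(x)=\ret(y)$ is by definition the relation $\sigma_x=\sigma_y$. Moreover $X^{(1)}$ is again indecomposable (a transitive action descends to a transitive action on the quotient) and $|X^{(1)}|$ divides $|X|$ by \cref{pro:size_of_image_divides_size_of_domain}, hence is again squarefree.

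Assume for the moment that the retraction is proper, $|X^{(1)}|<|X|$. Put $K=\ker\G(\ret)=\Soc(\G(X))$ by \cref{thm:soc_is_ker_g_ret}; since $\G(\ret)$ is surjective we have $|\G(X)|=|K|\cdot|\G(X^{(1)})|$. Then \cref{lem:primes_in_an_extension} applied to $\ret$ gives $\pi(|K|)\subseteq\pi(|X|/|X^{(1)}|)\subseteq\pi(|X|)$, while the induction hypothesis gives $\pi(|\G(X^{(1)})|)=\pi(|X^{(1)}|)\subseteq\pi(|X|)$. Combining, $\pi(|\G(X)|)=\pi(|K|)\cup\pi(|\G(X^{(1)})|)\subseteq\pi(|X|)$, which closes the reverse inclusion; and $\mpl(X)=1+\mpl(X^{(1)})<\infty$, again by induction. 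The base case $|X|=1$ is trivial.

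Thus everything reduces to the single claim that \emph{a nontrivial indecomposable cycle set of squarefree size is retractable}, equivalently that an irretractable indecomposable cycle set of squarefree size is a point, equivalently that $\Soc(\G(X))$ is nontrivial whenever $|X|>1$. I expect this to be the main obstacle; it is the genuine contribution of Cedó and Okniński, and the difficulty is that squarefreeness of $|X|$ places no direct constraint on the order of the brace $B=\G(X)$, so it must be exploited through the action rather than through $B$ itself. I would attack it via a minimal counterexample $X$: here $\lambda\colon B^{\circ}\to\Aut(B^{+})$ is faithful (as $\Soc(B)=\ker\lambda$ is trivial) and $x\mapsto\lambda_x$ realises $X$ as a transitively-acted-upon generating sub-cycle-set of $B$. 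Mirroring the fixed-point argument used to prove \cref{lem:primes_in_an_extension}, one would pick a prime $p\nmid|X|$ dividing $|B|$: any $p$-subgroup of $B$ then fixes a point of $X$, since $|X|$ is prime to $p$, so it lies in a point stabiliser; combined with the characteristic/normal considerations of that lemma and the fixed-point mechanism of \cref{lem:coprime_affine_actions_have_fixed_points}, the aim is to force a nontrivial normal subgroup acting trivially, i.e. a nontrivial element of $\Soc(B)$, contradicting faithfulness of $\lambda$. Converting this outline into a rigorous prime-by-prime analysis, through the Hall $\pi$-primary decomposition of $B$ and a careful use of squarefreeness of the degree, is precisely the technical heart of the statement, and is why we prefer to quote it.
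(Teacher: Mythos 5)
Your inductive reduction is correct as far as it goes: $\pi(|X|)\subseteq\pi(|\G(X)|)$ follows from transitivity, the retraction is indeed a constant extension of indecomposable cycle sets, and---\emph{granting} that every nontrivial indecomposable cycle set of squarefree size retracts properly---the factorization $|\G(X)|=|K|\cdot|\G(X^{(1)})|$ together with \cref{lem:primes_in_an_extension}, \cref{pro:size_of_image_divides_size_of_domain} and induction does close both assertions. Note that the paper offers no argument at all here (its proof is the citation to \cite{CO_SquarefreeIndecomposable}), so your observation that the prime-set equality and finiteness of $\mpl$ both follow from retractability alone, using this paper's own \cref{lem:primes_in_an_extension}, is a genuine structural remark. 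But as a proof of the stated theorem your proposal has the gap you yourself flag: the claim that a nontrivial indecomposable cycle set of squarefree size is retractable is not an auxiliary technicality---it \emph{is} the theorem. Everything you actually prove is downstream of it, so what you have is a derivation of the second assertion from the first, not a proof of either; in the end you quote Cedó--Okniński for the same content the paper quotes them for.

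Moreover, the minimal-counterexample attack you sketch for the core claim would not go through. Its opening move---pick a prime $p$ dividing $|\G(X)|$ but not $|X|$---may be vacuous: nothing rules out $\pi(|\G(X)|)\subseteq\pi(|X|)$ for an irretractable counterexample, and in that case the sketch says nothing. Even when such a $p$ exists, a $p$-subgroup fixing a point of $X$ only lies in a point stabilizer; the upgrade from ``fixes a point'' to ``acts trivially'' in \cref{lem:primes_in_an_extension} used that $K_p$ is characteristic in $K=\ker(\G(f))$, which is \emph{normal} in $\G(Y)^{\circ}$. In an irretractable $X$ the natural normal subgroup, $\Soc(\G(X))=\ker(\G(\ret))$ by \cref{thm:soc_is_ker_g_ret}, is trivial by hypothesis, so there is no normal abelian subgroup on which to run the Schur--Zassenhaus mechanism of \cref{lem:coprime_affine_actions_have_fixed_points}, and a Sylow $p$-subgroup of $\G(X)^{\circ}$ is in general not normal. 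The fixed-point technology of this paper therefore cannot be bootstrapped into producing nontrivial socle elements; the actual Cedó--Okniński argument is of a different and substantially more involved nature, which is precisely why both you and the paper must cite it.
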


\begin{proof}
    See \cite[Theorem 4.1, Theorem 4.5]{CO_SquarefreeIndecomposable}.
\end{proof}

\begin{cor} \label{cor:squarefree_cyclesets_are_coprime_extensions}
    Let $X$ be an indecomposable cycle set of squarefree size, then the homomorphism $\ret: X \twoheadrightarrow X^{(1)}$ is a coprime extension.
\end{cor}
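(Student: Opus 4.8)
The plan is to verify the two defining conditions of a coprime extension for the retraction map $\ret: X \twoheadrightarrow X^{(1)}$, namely that it is an extension of indecomposable cycle sets and that the relevant gcd is $1$. First I would recall that by definition, $\ret$ identifies $x \sim y$ precisely when $\sigma_x = \sigma_y$; thus whenever $\ret(x) = \ret(y)$ we automatically have $\sigma_x = \sigma_y$, which is exactly the constant-extension condition. The surjectivity of $\ret$ is immediate from its construction as a quotient map. The indecomposability of the source $X$ is assumed, and the indecomposability of the target $X^{(1)}$ follows since a quotient of an indecomposable cycle set is indecomposable (the permutation group $\G(X)$ acts transitively on $X$, hence $\G(X^{(1)})$ acts transitively on $X^{(1)}$, using the functorial surjection $\G(\ret): \G(X) \twoheadrightarrow \G(X^{(1)})$). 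So $\ret$ is an extension of indecomposable cycle sets.

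Next I would establish the coprimality condition $\gcd\left(|\G(X^{(1)})|, \frac{|X|}{|X^{(1)}|}\right) = 1$. This is where \cref{thm:cedo_okninski} does the heavy lifting. Since $X$ has squarefree size and is indecomposable, that theorem gives $\pi(|\G(X)|) = \pi(|X|)$, and the same applies to $X^{(1)}$, which is again an indecomposable cycle set of squarefree size by the Cedó--Okniński results: we get $\pi(|\G(X^{(1)})|) = \pi(|X^{(1)}|)$. Therefore the primes dividing $|\G(X^{(1)})|$ are exactly the primes dividing $|X^{(1)}|$. On the other hand, since $|X|$ is squarefree and $|X^{(1)}|$ divides $|X|$ (by \cref{pro:size_of_image_divides_size_of_domain}, as $\ret$ is a surjection of indecomposable cycle sets), the quotient $\frac{|X|}{|X^{(1)}|}$ is a product of primes disjoint from those dividing $|X^{(1)}|$; indeed, squarefreeness of $|X|$ forces $\gcd\left(|X^{(1)}|, \frac{|X|}{|X^{(1)}|}\right) = 1$.

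Combining these, $\pi\left(\frac{|X|}{|X^{(1)}|}\right)$ and $\pi(|X^{(1)}|) = \pi(|\G(X^{(1)})|)$ are disjoint, which is exactly the assertion that $\gcd\left(|\G(X^{(1)})|, \frac{|X|}{|X^{(1)}|}\right) = 1$. Hence $\ret$ is a coprime extension.

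I expect the main subtlety to lie not in any computation but in assembling the correct bookkeeping of prime sets: one must carefully invoke \cref{thm:cedo_okninski} for the \emph{target} $X^{(1)}$ (using that it too is indecomposable of squarefree size, which the Cedó--Okniński framework guarantees), rather than only for $X$, and then combine this with the elementary fact that squarefreeness of $|X|$ makes $|X^{(1)}|$ and $|X|/|X^{(1)}|$ coprime. Everything else — surjectivity, the constant-extension property, and indecomposability of the quotient — is routine and follows directly from the definitions and the functoriality of $\G(-)$ on surjections.
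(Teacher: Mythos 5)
Your proposal is correct and follows essentially the same route as the paper: apply the Cedó--Okniński theorem to $X^{(1)}$ to get $\pi(|\G(X^{(1)})|) = \pi(|X^{(1)}|)$, and combine this with the fact that squarefreeness of $|X|$ makes $|X^{(1)}|$ and $|X|/|X^{(1)}|$ coprime. The paper's proof simply omits the routine preliminary checks (constant-extension property, indecomposability of the quotient) that you spell out.
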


\begin{proof}
    Let $|X|$ be squarefree then $\pi(|\G(X^{(1)})|) = \pi(|X^{(1)}|)$, by \cref{thm:cedo_okninski}. Furthermore, the squarefreeness of $|X|$ implies
    \[
    \emptyset = \pi\left( \frac{|X|}{|X^{(1)}|} \right) \cap \pi(|X^{(1)}|)  = \pi\left( \frac{|X|}{|X^{(1)}|} \right) \cap \pi(|\G(X^{(1)})|),
    \]
    so $\gcd\left(\frac{|X|}{|X^{(1)}|}, |\G(X)| \right) = 1$, that is, $\ret: X \twoheadrightarrow X^{(1)}$ is a coprime extension.
\end{proof}

Together with the apparatus developed in \cref{sec:coprime_extensions}, \cref{cor:squarefree_cyclesets_are_coprime_extensions} allows us to implement the following strategy to construct all indecomposable cycle sets of squarefree size $n$:

\begin{enumerate}
    \item Suppose that all indecomposable cycle sets of sizes $m|n$, $m \neq n$ have already been constructed.
    \item For given $m$ with $n = m \cdot k$, consider the cycle sets $X^{\prime}$ with $|X^{\prime}| = m$.
    \item Construct all $X^{\prime}$-graded $\G(X^{\prime})$-modules $A$ with $|A_{x_0}| = k$.
    \item Find all $\G(X^{\prime})$-equivariant mappings $\Phi: X^{\prime} \times X^{\prime} \to A$ that satisfy the indecomposability criterion in \cref{thm:indecomposability_criterion}.
\end{enumerate}

We start by repeating a well-known case:

\begin{pro} \label{pro:mp1_indecomposable_implies_cyclic}
    If $X$ is an indecomposable cycle set with $\mpl(X) = 1$ and $|X| = n$, then $X \cong \Z_n$ with $x \ast y = y+1$. In particular, $X$ is uniconnected.
\end{pro}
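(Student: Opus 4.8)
The plan is to exploit that the hypothesis $\mpl(X) = 1$ collapses the operation so that it ignores its left argument, and then let indecomposability pin down the single permutation that remains.

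First I would unwind the hypothesis. By definition of the multipermutation level, $\mpl(X) = 1$ means $|X^{(1)}| = |X_{\ret}| = 1$, so all elements of $X$ lie in one class of the relation $x \sim y \Leftrightarrow \sigma_x = \sigma_y$. Hence there is a single permutation $\sigma \in \Sym_X$ with $\sigma_x = \sigma$ for every $x \in X$, and the cycle set operation becomes $x \ast y = \sigma_x(y) = \sigma(y)$, independent of $x$. (Conversely any such $\sigma$ yields a cycle set: the cycloid equation \eqref{eq:cycloid_equation} reduces on both sides to $\sigma^2(z)$, and nondegeneracy holds since $\Sq = \sigma$ is a bijection; but for the proof I only need the forward direction.)

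Next I would compute the permutation group. Since every $\sigma_x$ equals $\sigma$, we get $\G(X) = \langle \sigma_x : x \in X \rangle = \langle \sigma \rangle$, a cyclic group. As recalled in the preliminaries, $X$ is indecomposable if and only if $\G(X)$ acts transitively on $X$; thus $\langle \sigma \rangle$ must act transitively on the $n$-element set $X$. The elementary fact that a cyclic group generated by a single permutation is transitive on $n$ points exactly when that permutation is an $n$-cycle then forces $\sigma$ to be an $n$-cycle. Fixing a base point $x_0$ and labelling $\sigma^i(x_0) \leftrightarrow i$ identifies $X$ with $\Z_n$ so that $\sigma$ becomes $i \mapsto i+1$; under this bijection the operation $x \ast y = \sigma(y)$ turns into $x \ast y = y+1$, which is the claimed isomorphism $X \cong \Z_n$.

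Finally, uniconnectedness is immediate from what precedes: $\langle \sigma \rangle$ has order $n$ and acts transitively on $n$ points, hence regularly, so the stabilizer $\Pi_1(X,x_0) = \{ g \in \G(X) : \lambda_g(x_0) = x_0 \}$ is trivial, which is precisely the definition of uniconnected. I do not expect a genuine obstacle here; the only points needing the smallest amount of care are the standard observation that a cyclic permutation group is transitive exactly when its generator is a full cycle, and the routine check that the collapsed operation really is a nondegenerate cycle set, both of which follow directly from the material already set up.
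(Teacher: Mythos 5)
Your proposal is correct and follows essentially the same route as the paper: collapse the operation to $x \ast y = \sigma(y)$ using $\mpl(X)=1$, note $\G(X) = \langle \sigma \rangle$ is transitive exactly when $\sigma$ is an $n$-cycle, and label $\sigma^k(x_0) \leftrightarrow k$ to identify $X$ with $\Z_n$. The added remarks on nondegeneracy and on regularity giving uniconnectedness are fine but not needed beyond what the paper states.
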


\begin{proof}
    As $\mpl(X) = 1$, we can write $x \ast y = \sigma(y)$ for some permutation $\sigma \in \Sym_X$. Clearly $\G(X) = \left\langle \sigma \right\rangle$ which is transitive if and only if $\sigma$ is an $n$-cycle. Therefore, we can choose an $x_0 \in X$ and construct the desired isomorphism as
    \[
    f: \Z_n \to X ; \quad k \mapsto \sigma^k(x_0).
    \]
    Uniconnectivity is obvious.
\end{proof}

The following result concerns cycle sets of squarefree size and multipermutation level $2$ which have been constructed in greater generality by Jedli\v{c}ka and Pilitowska \cite{jedlicka_pilitowska}. We nevertheless show how to obtain these cycle sets by the method developed in this article; furthermore, it gives us the opportunity to express those cycle sets in a way that is handy for further computations concerning cycle sets of multipermutation level $3$.

\begin{pro} \label{thm:classification_pq}
    Let $X$ be an indecomposable cycle set of squarefree size $n = m \cdot k$. If $\mpl(X) = 2$ and $|X^{(1)}| = m$, then there is an isomorphism with a parallel extension $X \cong \Z_m \times_{\Gamma} \Z_k$ where $\Z_m$ carries the cycle set operation $x \ast y = y + 1$.
    
    Putting $\Gamma^0(x)= \Gamma(0,x)$, the cycle set operation of $\Z_m \times_{\Gamma} \Z_k$ can be expressed by
    \begin{equation} \label{eq:classification_mp2}
        (x,a) \ast (y,b) = (y+1, b + \Gamma^0(y-x)).
    \end{equation}
    Vice versa, each map $\Gamma^0: \Z_m \to \Z_k$ provides an indecomposable cycle set with $\mpl(X) = 2$ and $|X^{(1)}| = m$ if $\Z_k = \left\langle \Gamma^0(x): x \in \Z_m \right\rangle$ and if for all $0 \neq d \in \Z_m$, there is an $x \in \Z_m$ with $\Gamma^0(x+d) \neq \Gamma^0(x)$.
\end{pro}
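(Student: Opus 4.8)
The plan is to prove the two directions separately. For the forward direction I would first note that the retraction $X^{(1)}$ is itself indecomposable (being a surjective image of $X$) with $\mpl(X^{(1)}) = 1$ and $|X^{(1)}| = m$, so \cref{pro:mp1_indecomposable_implies_cyclic} identifies it with $\Z_m$ under $x \ast y = y+1$ and, crucially, records that it is uniconnected. Since $|X|$ is squarefree, \cref{cor:squarefree_cyclesets_are_coprime_extensions} makes $\ret \colon X \twoheadrightarrow X^{(1)}$ a coprime extension, whence \cref{cor:coprime_extensions_of_a_uniconnected_cycle_set} gives an equivalence $X \cong \Z_m \times_{\Gamma} A$ for an abelian group $A$ of order $k$. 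The one genuinely new observation here is that $k \mid n$ is squarefree, so the abelian group $A$ of order $k$ is forced to be cyclic, i.e. $A \cong \Z_k$.

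To obtain \cref{eq:classification_mp2} I would make the $\G(\Z_m)$-action explicit: $\G(\Z_m)$ is the cyclic translation group, so the invariance \cref{eq:equivariance_for_parallel_extensions} forces $\Gamma(x,y)$ to depend only on $y-x$, giving $\Gamma(x,y) = \Gamma^0(y-x)$; substituting into \cref{eq:cycle_operation_parallel_extension} (equivalently, computing $\lambda_g^{-1}(y) = y - x$ for $\lambda_g(0) = x$ in \cref{eq:parallel_extensions_expressed_by_gamma0}) yields the stated formula. For the converse, starting from any $\Gamma^0 \colon \Z_m \to \Z_k$ I set $\Gamma(x,y) = \Gamma^0(y-x)$, which is visibly invariant under the diagonal translation action and hence satisfies \cref{eq:equivariance_for_parallel_extensions}, so $\Z_m \times_{\Gamma} \Z_k$ is a legitimate parallel extension. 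Since $n = mk$ squarefree gives $\gcd(|\G(\Z_m)|, |\Z_k|) = \gcd(m,k) = 1$, \cref{cor:indecomposability_criterion} applies and shows that indecomposability is equivalent to $\Z_k = \langle \Gamma(x,y)\rangle = \langle \Gamma^0(d) : d \in \Z_m\rangle$, precisely the first hypothesis on $\Gamma^0$.

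It then remains to identify the multipermutation level, which I expect to be the main obstacle. I would compute $\sigma_{(x,a)}(y,b) = (y+1, b + \Gamma^0(y-x))$, observe that it depends only on the base coordinate $x$, and show that two elements $(x,a),(x',a')$ are retraction-equivalent exactly when $\Gamma^0$ is invariant under translation by $x'-x$. The second hypothesis---that no nonzero $d$ leaves $\Gamma^0$ translation-invariant---is then exactly what guarantees that the retraction collapses the fibers of $\pr$ and nothing more, so $X^{(1)} \cong \Z_m$ and (for $m,k>1$) $\mpl(X) = 2$ with $|X^{(1)}| = m$. The delicate point throughout is this clean separation of the two hypotheses: generation controls indecomposability via \cref{cor:indecomposability_criterion}, while translation-freeness controls that the level is exactly two; the remainder is bookkeeping once squarefreeness has forced $A$ to be cyclic and translation-invariance has reduced $\Gamma$ to the single-variable datum $\Gamma^0$.
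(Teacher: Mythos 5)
Your proposal is correct and follows essentially the same route as the paper: identify $X^{(1)}\cong\Z_m$ via \cref{pro:mp1_indecomposable_implies_cyclic}, apply \cref{cor:squarefree_cyclesets_are_coprime_extensions} and \cref{cor:coprime_extensions_of_a_uniconnected_cycle_set} to get a parallel extension by a cyclic group, reduce $\Gamma$ to $\Gamma^0(y-x)$ by translation-invariance, and use \cref{cor:indecomposability_criterion} for indecomposability and the translation-freeness of $\Gamma^0$ for the retraction computation. The only cosmetic difference is that you make explicit the (correct) observation that squarefreeness forces the fiber group to be cyclic, which the paper states without comment.
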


\begin{proof}
    If $X$ is indecomposable and $\mpl(X) =2$, then $X^{(1)}$ is indecomposable with $\mpl(X^{(1)})=1$. By \cref{pro:mp1_indecomposable_implies_cyclic}, $X^{(1)} \cong \Z_m$, with the cycle set operation $x \ast y = y + 1$. Identifying $X^{(1)} = \Z_m$, the retraction map $X \twoheadrightarrow \Z_m$ is a coprime extension, by \cref{cor:squarefree_cyclesets_are_coprime_extensions}. As $\Z_m$ is uniconnected, \cref{cor:coprime_extensions_of_a_uniconnected_cycle_set} implies that $X$ is a parallel extension of $\Z_m$ by an abelian group of order $k$ which must be isomorphic to $\Z_k$. It follows that $X \cong \Z_m \times_{\Gamma}\Z_k$ for a $\G(X^{(1)})$-invariant map $\Gamma: \Z_m \times \Z_m \to \Z_k$.

    Putting $\Gamma^0(x) = \Gamma(0,x)$, we can use \cref{eq:cycle_operation_parallel_extension} to express the operation on $\Z_m \times_{\Gamma}\Z_k$ as
    \[
    (x,a) \ast (y,b) = (y+1,b + \Gamma^0(\lambda_g^{-1}(y)))
    \]
    with a fixed map $\Gamma^0: \Z_m \to \Z_k$ where $g \in \G(X^{(1)})$ is chosen such that $\lambda_g(0)=x$. Obviously, $g$ must satisfy $\lambda_g(z) = z + x$, therefore $\lambda_g^{-1}(y) = y-x$ which leads to the description by \cref{eq:classification_mp2}.

    By \cref{cor:indecomposability_criterion}, given $\Gamma^0: \Z_m \to \Z_k$, the cycle set on $\Z_m \times \Z_k$ defined by \cref{eq:classification_mp2} is indecomposable if and only if
    \[
    \Z_k = \left\langle \Gamma(x,y) : x,y \in \Z_m \right\rangle = \left\langle \Gamma^0(y-x) : x,y \in \Z_m \right\rangle = \left\langle \Gamma^0(x) : x \in \Z_m \right\rangle.
    \]
    If the projection $\Z_m \times \Z_k \to \Z_m$; $(x,a) \mapsto x$ is a retraction then for any $0 \neq d \in \Z_m$ we have $\sigma_{(0,0)} \neq \sigma_{(-d,0)}$, which implies that there is an $x \in \Z_m$ with $\Gamma^0(x-0) \neq \Gamma^0(x-(-d))$, i.e. $\Gamma^0(x) \neq \Gamma^0(x+d)$. If, on the other hand, $\sigma_{(x,0)} = \sigma_{(y,0)}$ for distinct $x,y \in \Z_m$, then for all $z \in \Z_m$, we have $\Gamma^0(z-x) = \Gamma^0(z-y)$. Substituting $z' = z-x$ and putting $d = x-y$, we see that for all $z' \in \Z_m$, we have $\Gamma^0(z') = \Gamma^0(z'+d)$, while $0 \neq d \in \Z_m$.
\end{proof}

We now want to tackle the classification of indecomposable cycle sets $X$ of size $pqr$ where $p,q,r$ are distinct primes and where $\mpl(X) = 3$. Without restriction, we can assume that $|X^{(1)}| = pq$ and $|X^{(2)}| = p$. Using \cref{thm:classification_pq}, we can represent $X^{(1)} = \Z_p \times_{\Gamma} \Z_q$. As $q$ is prime, \cref{thm:classification_pq} implies that $\Gamma: \Z_p \times \Z_p \to \Z_q$ indeed defines an indecomposable cycle set if and only if $\Gamma^0 = \Gamma(0,-)$ is nonzero and, by the primality of $p$, the multipermutation level of this cycle set is $2$ if and only if $\Gamma^0$ is nonconstant which is equivalent to $\Gamma$ being nonconstant.

We first want to understand the stabilizers in $X^{(1)}$ better. To this end, we prove the following result:

\begin{pro} \label{pro:orbits_of_stabilizer_pq}
    Let $X = \Z_p \times_{\Gamma} \Z_q$ where $\Gamma$ is nonconstant. Let $x_0 = (0,0)$. Then the orbits of $\Pi_1(X,x_0)$ on $X$ are given by:
    \begin{enumerate}
        \item the singletons $\{ (x,a) \}$ if $\Gamma^0$ is \emph{proportional to a one-dimensional character}, that is, $\Gamma^0 = \alpha \cdot \chi$ for some $\alpha \in \Z_q^{\ast}$ and some $\chi: \Z_p \to \Z_q^{\times}$ with $\chi(x+y) = \chi(x) \chi(y)$.
        \item else, the singletons $\{ (0,a)\}$ ($a \in \Z_q$) and the blocks $\{ x \} \times \Z_q$ ($x \in \Z_p^{\ast}$).
    \end{enumerate}
\end{pro}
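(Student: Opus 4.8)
The plan is to make the permutation brace $\G(X)$ completely explicit, recognise $\Pi_1(X,x_0)$ as a point stabiliser, and then reduce the whole question to the proportionality of certain evaluation functionals on a submodule of a group algebra.

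First I would set up the structure. Since $\Gamma$ is nonconstant and $q$ is prime, some value $\Gamma(x,y)$ is nonzero and hence generates $\Z_q$, so $X$ is indecomposable by \cref{cor:indecomposability_criterion}; moreover $\gcd(|\G(\Z_p)|,|\Z_q|)=\gcd(p,q)=1$, so the parallel extension $\pr\colon X=\Z_p\times_\Gamma\Z_q\twoheadrightarrow\Z_p$ satisfies the hypotheses of \cref{cor:generating_kernel_of_parallel_extension}. This gives $\G(X)=\G(X)_\pi\ltimes K$ with $\pi=\{p\}$, where $\G(X)_\pi$ acts by $(x,a)\mapsto(\lambda_g(x),a)$ (translations in the $\Z_p$-coordinate) and $K=\langle\Gamma_x:x\in\Z_p\rangle\leq\Z_q^{\Z_p}$ acts by $(x,a)\mapsto(x,a+f(x))$, with $\Gamma_x(y)=\Gamma^0(y-x)$ by \cref{thm:classification_pq}. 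The group $\Pi_1(X,x_0)$ is the stabiliser of $x_0=(0,0)$; since a nontrivial element of $\G(X)_\pi$ moves the first coordinate, the stabiliser is contained in $K$ and equals $K_0:=\{f\in K:f(0)=0\}$. Hence the $\Pi_1$-orbit of $(x,a)$ is $\{x\}\times(a+H_x)$, where $H_x:=\{f(x):f\in K_0\}\leq\Z_q$. As $q$ is prime, each $H_x$ is either $0$ or all of $\Z_q$, and $H_0=0$ automatically.

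It then remains to decide, for $x\neq0$, whether $H_x=0$ or $\Z_q$; equivalently, whether $\ker(\mathrm{ev}_0|_K)\subseteq\ker(\mathrm{ev}_x|_K)$, i.e. whether the evaluation functionals $\mathrm{ev}_0$ and $\mathrm{ev}_x$ are proportional on $K$. I regard $K$ as a $\Z_p$-invariant $\F_q$-subspace of the group algebra $\F_q^{\Z_p}$: the action of $\G(X)_\pi$ cyclically permutes the generators, $g\cdot\Gamma_x=\Gamma_{x+c}$ when $g$ translates the base coordinate by $c$, and dually it sends $\mathrm{ev}_x\mapsto\mathrm{ev}_{x-c}$ on $K$. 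The crucial lemma is that a single proportionality spreads to all points: if $\mathrm{ev}_{x_1}|_K=\mu\,\mathrm{ev}_0|_K$ for some $x_1\neq0$ (with $\mu\neq0$, since no $\mathrm{ev}_x$ vanishes on $K$ because $\Gamma^0\neq0$), then precomposing with the $\Z_p$-action gives $\mathrm{ev}_{w+x_1}|_K=\mu\,\mathrm{ev}_w|_K$ for every $w$. As $x_1$ generates $\Z_p$, iterating makes every $\mathrm{ev}_w|_K$ a nonzero scalar multiple of $\mathrm{ev}_0|_K$; since $\bigcap_w\ker(\mathrm{ev}_w|_K)=0$, this forces $\ker(\mathrm{ev}_0|_K)=0$ and hence $\dim_{\F_q}K=1$.

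Finally I would match $\dim_{\F_q}K=1$ with the character condition. If $\Gamma^0=\alpha\chi$ with $\chi$ multiplicative and $\alpha\in\Z_q^{\ast}$, then $\Gamma_x=\chi(x)^{-1}\Gamma^0$, so $K=\F_q\Gamma^0$ is one-dimensional and $\mathrm{ev}_0|_K$ is injective (as $\Gamma^0(0)=\alpha\neq0$), giving $K_0=0$ and $H_x=0$ for all $x$, which is case (1). Conversely $\dim_{\F_q}K=1$ means $\Gamma_c=\nu(c)\Gamma^0$ for a character $\nu\colon\Z_p\to\F_q^{\ast}$, and evaluating the resulting identity $\Gamma^0(y-c)=\nu(c)\Gamma^0(y)$ at $y=c$ shows $\Gamma^0=\Gamma^0(0)\cdot\nu^{-1}$ is proportional to a character (with $\Gamma^0(0)\neq0$). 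Thus if $\Gamma^0$ is proportional to a character we are in case (1); otherwise $\dim_{\F_q}K\geq2$, so by the lemma no $x\neq0$ yields $H_x=0$, whence $H_x=\Z_q$ for all $x\neq0$ and the orbits are exactly the singletons $\{(0,a)\}$ together with the full fibres $\{x\}\times\Z_q$, which is case (2). The only genuinely non-formal step, and the one I expect to be the main obstacle, is the propagation lemma: converting one proportionality at a single nonzero point into proportionality at all points via the cyclic $\Z_p$-action; the remaining arguments are bookkeeping for the semidirect product and a one-dimensionality computation.
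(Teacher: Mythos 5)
Your proof is correct and follows essentially the same route as the paper, which isolates the identification $\Pi_1(X,x_0)=K_0$ in \cref{lem:zeros_of_stabilizer} and proves your ``propagation lemma'' as \cref{lem:zeros_of_invariant_subspaces}, phrased via the zero-set $\mathcal{Z}_{K,\{0\}}$ rather than proportionality of the evaluation functionals $\mathrm{ev}_x|_K$ --- these are the same statement, and the shift-invariance argument and the $\dim_{\F_q}K=1\Leftrightarrow\Gamma^0$-proportional-to-a-character computation coincide with the paper's.
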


Our approach is to exploit the functional description of $K = \ker(\G(X^{(1)}) \twoheadrightarrow \G(X^{(2)}))$ from \cref{pro:generating_kernel_of_Gpr} to connect stabilization to the vanishing of functions.

First of all, we present a very broad framework for this idea: let $M$ be a set and $N \subseteq M$. Let $A$ be an abelian group and consider a subgroup $H \leq A^M$. We want to measure to what extent the vanishing of a function $f \in H$ on $N$ forces $f$ to vanish on other points too. To this end, we define the set
\[
\mathcal{Z}_{H,N} = \{x \in M \ : \ \quad \forall f \in H: f|_N \equiv 0 \Rightarrow f(x) = 0  \}.
\]
Obviously $N \subseteq \mathcal{Z}_{H,N}$.

Our motivation to introduce this concept comes from the fact that the orbits of the stabilizer can be described in terms of $\mathcal{Z}_{H,N}$ if we consider extensions of uniconnected cycle sets. Recall that for $\Gamma: X \times X \to B$ where $B$ is an (ungraded) group, we define $\Gamma_x \in B^X$ by $\Gamma_x(y) = \Gamma(x,y)$.

\begin{lem} \label{lem:zeros_of_stabilizer}
    Let $X$ be a uniconnected cycle set, $q$ a prime, and suppose that $\pr: Y = X \times_{\Gamma} \Z_q \twoheadrightarrow X$ is a coprime, parallel extension of indecomposable cycle sets.
    
    Let $y_0 = (x_0,0) \in Y$ and put
    \[
    H = \genrel{\Gamma_x}{x \in X} \leq \Z_q^X.
    \]
    Then the orbit of $(x,a) \in Y$ under $\Pi_1(Y,y_0)$ is
    \[
    \mathcal{O}_{\Pi_1(Y,y_0)}(x,a) = \begin{cases}
        \{(x,a) \} & x \in \mathcal{Z}_{H,\{x_0\}} \\
        \{ x \} \times \Z_q & x \not\in \mathcal{Z}_{H,\{x_0\}}.
    \end{cases}
    \]
\end{lem}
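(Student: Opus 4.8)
The plan is to read off both the stabilizer $\Pi_1(Y,y_0)$ and its orbits directly from the explicit description of $\G(Y)$ furnished by \cref{cor:generating_kernel_of_parallel_extension}. Since $X$ is uniconnected (hence indecomposable), $Y$ is indecomposable, and $\gcd(|\G(X)|,q)=1$ because $\pr$ is a coprime extension with fibers of size $q$, that corollary applies and gives a decomposition $\G(Y) = \G(Y)_{\pi} \ltimes K$ in which $K = H = \langle \Gamma_x : x \in X\rangle \leq \Z_q^X$, where $\G(Y)_{\pi} = \{(x,a)\mapsto(\lambda_g(x),a) : g \in \G(X)\}$ and each $f \in K$ acts by $\lambda_f(x,a) = (x,a+f(x))$. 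The key observation is that the group $H$ appearing in the statement is \emph{exactly} the kernel $K$.

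First I would compute the stabilizer. Writing a general element of $\G(Y)$ as $\lambda_{s(g)}\lambda_f$ with $g \in \G(X)$ and $f \in K$, one finds $\lambda_{s(g)}\lambda_f(x_0,0) = (\lambda_g(x_0),f(x_0))$, which equals $y_0=(x_0,0)$ precisely when $\lambda_g(x_0)=x_0$ and $f(x_0)=0$. Here uniconnectivity does the essential work: since $\G(X)$ acts regularly on $X$, the condition $\lambda_g(x_0)=x_0$ forces $g=\id$. Consequently
\[
\Pi_1(Y,y_0) = \{\lambda_f : f \in H,\ f(x_0)=0\}.
\]

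Next I would describe the orbit of a point $(x,a)$. Applying these $\lambda_f$ yields
\[
\Orb_{\Pi_1(Y,y_0)}(x,a) = \{(x,a+f(x)) : f \in H,\ f(x_0)=0\} = \{x\}\times(a+E_x),
\]
where $E_x = \{f(x) : f \in H,\ f(x_0)=0\}$ is the image of the evaluation-at-$x$ homomorphism restricted to $\{f\in H : f(x_0)=0\}$, hence a subgroup of $\Z_q$. Because $q$ is prime, $\Z_q$ has no proper nontrivial subgroups, so $E_x$ is either $\{0\}$ or all of $\Z_q$. By the definition of $\mathcal{Z}_{H,\{x_0\}}$, the case $E_x=\{0\}$—every $f\in H$ vanishing at $x_0$ also vanishing at $x$—is exactly the condition $x\in\mathcal{Z}_{H,\{x_0\}}$, giving the singleton orbit $\{(x,a)\}$; otherwise $E_x=\Z_q$ and the orbit is the full fiber $\{x\}\times\Z_q$.

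The argument is essentially a single computation once \cref{cor:generating_kernel_of_parallel_extension} is in hand, so I do not anticipate a serious obstacle. The only two points demanding care are the use of uniconnectivity to eliminate the $\G(X)$-component from the stabilizer, and the appeal to the primality of $q$ to force the dichotomy on the subgroup $E_x$; both are precisely the hypotheses placed on $X$ and on the fiber.
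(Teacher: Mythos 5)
Your proof is correct and follows essentially the same route as the paper: both arguments invoke \cref{cor:generating_kernel_of_parallel_extension} to identify the kernel of $\G(Y)\twoheadrightarrow\G(X)$ with $H$, use uniconnectivity (regularity of the $\G(X)$-action) to eliminate the $\G(Y)_{\pi}$-component of the stabilizer, and conclude the orbit dichotomy from the fact that the evaluation image is a subgroup of $\Z_q$ with $q$ prime. The only cosmetic difference is that the paper deduces $\Pi_1(Y,y_0)\subseteq\ker(\G(Y)\to\G(X))$ via the induced action on the quotient, whereas you compute directly with the semidirect decomposition; you also spell out the primality step that the paper leaves as ``we see similarly.''
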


\begin{proof}
    The induced action of $\Pi_1(Y,y_0)$ on the quotient $X$ fixes $x_0$, thus it fixes the entirety of $X$, as $X$ is uniconnected. It follows that
    \[
    \Pi_1(Y,y_0) \subseteq \ker(\G(Y) \to \G(X)) \leq \Soc(G(Y)) \overset{\textnormal{\cref{cor:generating_kernel_of_parallel_extension}}}{=} \genrel{\Gamma_x}{x \in X} = H.
    \]
    As an element $f \in H$ acts as $f \cdot (x,a) = (x,a+ f(x))$, it is now obvious that $\Pi_1(Y,y_0)$ can be identified with $H_0 = \{ f \in H: f(0) = 0 \}$.
    
    Furthermore, $(x,a)$ is fixed under the action of $H_0$ if and only if $f(x) = 0$ for all $f \in H_0$, i.e. $x \in \mathcal{Z}_{H,\{x_0\}}$. On the other hand, if $x \not\in \mathcal{Z}_{H,\{x_0\}}$, we see similarly that $\mathcal{O}_{H_0}(x,a) \supseteq \{ x \} \times \Z_p$. As $H_0$ fixes all sets $\{ x\} \times \Z_q$ setwise, we conclude that $\mathcal{O}_{H_0}(x,a) = \{ x \} \times \Z_q$.
\end{proof}

We can now state our lemma. For this, we denote by $\mathbb{F}_q$ the field with $q$ elements.

\begin{lem} \label{lem:zeros_of_invariant_subspaces}
    Let $p,q$ be different primes and let $U \leq \mathbb{F}_q^{\Z_p}$ be a $\Z_p$-invariant subspace with respect to the regular representation. Then
    \[
    \mathcal{Z}_{U,\{0\}} = \begin{cases}
        \Z_p & \dim_{\mathbb{F}_q}U \leq 1 \\
        \{ 0 \} & \dim_{\mathbb{F}_q}U \geq 2.
    \end{cases}
    \]
\end{lem}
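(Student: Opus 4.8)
The plan is to unwind the definition of $\mathcal{Z}_{U,\{0\}}$ into a statement about a single vector and then exploit that $p$ is prime. Write $V = \mathbb{F}_q^{\Z_p}$ for the regular module, with $\Z_p$ acting by translations $\tau_a$, where $(\tau_a f)(x) = f(x-a)$. By definition $\mathcal{Z}_{U,\{0\}}$ is the common zero locus of the subspace $U_0 = \{ f \in U : f(0) = 0\}$: that is, $x \in \mathcal{Z}_{U,\{0\}}$ precisely when every $f \in U$ with $f(0)=0$ also satisfies $f(x) = 0$. The difficulty is that this is phrased as an implication between pointwise vanishing conditions, and I want to convert it into a clean membership condition.

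To do so, I would introduce the standard nondegenerate symmetric pairing $\langle f, g\rangle = \sum_{z \in \Z_p} f(z) g(z)$ on $V$. Then evaluation becomes pairing against an indicator: $f(x) = \langle f, \delta_x\rangle$, where $\delta_x$ denotes the indicator of $x$ and $\delta_x = \tau_x \delta_0$. Since each $\tau_a$ is an isometry of $\langle\cdot,\cdot\rangle$, the orthogonal complement $U^{\perp}$ is again a $\Z_p$-submodule. Now $U_0 = U \cap \delta_0^{\perp}$, and $x \in \mathcal{Z}_{U,\{0\}}$ is the condition $U_0 \subseteq \delta_x^{\perp}$, i.e. $\delta_x \in U_0^{\perp}$. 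Using the elementary $\perp$-calculus for a nondegenerate form, namely $(A\cap B)^{\perp} = A^{\perp} + B^{\perp}$ and $(\delta_0^{\perp})^{\perp} = \langle\delta_0\rangle$, this rewrites as
\[
x \in \mathcal{Z}_{U,\{0\}} \iff \delta_x \in U^{\perp} + \langle\delta_0\rangle .
\]
This reformulation is the key step, and I expect it to be the main obstacle: everything afterwards is formal.

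From here the conclusion is quick. Passing to the quotient module $Q = V/U^{\perp}$ with projection $\pi$, and writing $\bar u = \pi(\delta_0)$, the displayed equivalence becomes $x \in \mathcal{Z}_{U,\{0\}} \iff \pi(\delta_x) = \tau_x \bar u \in \langle\bar u\rangle$, using that the action descends to $Q$ because $U^{\perp} = \ker\pi$ is invariant. Because $\delta_0$ is a cyclic generator of $V$ (its translates $\delta_x$ form a basis), its image $\bar u$ generates $Q$; moreover $\dim_{\mathbb{F}_q} Q = \dim_{\mathbb{F}_q} U$ by nondegeneracy of the form. The set $H = \{x : \tau_x \bar u \in \langle\bar u\rangle\}$ is a subgroup of $\Z_p$, hence $H = \{0\}$ or $H = \Z_p$ since $p$ is prime. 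Finally $H = \Z_p$ holds iff $\langle\bar u\rangle$ is $\Z_p$-invariant; as $\bar u$ generates $Q$, this happens iff $Q = \langle\bar u\rangle$, i.e. iff $\dim_{\mathbb{F}_q} U = \dim_{\mathbb{F}_q} Q \leq 1$. Thus $\mathcal{Z}_{U,\{0\}} = \Z_p$ when $\dim_{\mathbb{F}_q} U \leq 1$ and $\mathcal{Z}_{U,\{0\}} = \{0\}$ when $\dim_{\mathbb{F}_q} U \geq 2$, as claimed. I note that this argument in fact only uses that $p$ is prime; the hypothesis $p \neq q$ is not needed for the statement itself, and enters elsewhere to guarantee that the invariant subspaces $U$ are the semisimple summands one expects.
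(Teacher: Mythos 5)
Your proof is correct, but it takes a genuinely different route from the paper's. The paper works directly on the function side: it first observes that if some $d \neq 0$ lies in $\mathcal{Z}_{U,\{0\}}$, then shift-invariance of $U$ makes $\mathcal{Z}_{U,\{0\}}$ stable under $x \mapsto x+d$, forcing $\mathcal{Z}_{U,\{0\}} = \Z_p$ since $p$ is prime; it then shows that $\mathcal{Z}_{U,\{0\}} = \Z_p$ forces $\dim_{\mathbb{F}_q} U \leq 1$ because the evaluation map $f \mapsto f(0)$ must then be injective on $U$, and it settles the case $\dim_{\mathbb{F}_q} U = 1$ by exhibiting the generator as $x \mapsto \xi^x$, which never vanishes. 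Your duality reformulation $x \in \mathcal{Z}_{U,\{0\}} \Leftrightarrow \delta_x \in U^{\perp} + \langle \delta_0 \rangle$ replaces that two-sided case analysis by a single equivalence in the quotient module $Q = V/U^{\perp}$, where the dichotomy becomes the statement that $H = \{x : \tau_x \bar u \in \langle \bar u\rangle\}$ is a subgroup of $\Z_p$ which equals $\Z_p$ exactly when the cyclic generator $\bar u$ already spans $Q$ linearly; this handles both directions of the claim uniformly. (The only routine point left implicit is closure of $H$ under inversion, where the scalar $c$ with $\tau_x\bar u = c\bar u$ could a priori be zero; that only happens when $\bar u = 0$, i.e.\ $U = 0$, which is harmless.) Both arguments use primality of $p$ in the same place; yours buys a cleaner, more structural statement at the cost of invoking the $\perp$-calculus for the nondegenerate pairing, while the paper's stays entirely elementary. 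Your closing remark is also consistent with the paper's proof: neither argument actually uses $p \neq q$.
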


\begin{proof}
    It is immediate that $0 \in \mathcal{Z}_{U,\{ 0 \}}$. Suppose that there is a $d \in \Z_p$ such that $0 \neq d \in \mathcal{Z}_{U,\{ 0 \}}$, then for $f \in U$, the implication $f(0) = 0 \Rightarrow f(d) = 0$ is valid by definition. As $U$ is $\Z_p$-invariant, the implication $f(x) = 0 \Rightarrow f(x +d) = 0$ holds for $f \in U$ as well. This shows that $\mathcal{Z}_{U,\{ 0 \}}$ is invariant under the mapping $x \mapsto x + d$. Therefore, $\mathcal{Z}_{U,\{ 0 \}} = \Z_p$. In this case, we argue as follows that $\dim_{\mathbb{F}_q}U \leq 1$: if $\dim_{\mathbb{F}_q}U \geq 1$, there is an $x_0 \in \Z_p$ such that $f(x_0) \neq 0$ for at least one $f \in U$. By shift-invariance, we can chose $x_0$; therefore the $\mathbb{F}_q$-linear map $\varphi: U \to \mathbb{F}_q$; $f \mapsto f(0)$ is surjective with $\dim_{\mathbb{F}_q} \ker (\varphi) = \dim_{\mathbb{F}_q}U - 1 $. But for all $f \in \ker(\varphi)$, we have $f(0)$, so as $\mathcal{Z}_{U,\{ 0 \}} = \Z_p$ it follows that $f(x) = 0$ for all $x \in \Z_p$. Therefore, $\ker \varphi = \{ 0 \}$ which implies $\dim_{\mathbb{F}_q}U = 1$. Therefore, we have shown that $\mathcal{Z}_{U,\{ 0 \}} = \{ 0 \}$ if $\dim_{\mathbb{F}_q}U \geq 2$.

    If $\dim_{\mathbb{F}_q}U = 0$, it is obvious that $\mathcal{Z}_{U,\{ 0 \}} = \Z_p$. On the other hand, if $\dim_{\mathbb{F}_q}U = 1$, then we can pick an $f \in U$ with $f(0) = 1$. By $\Z_p$-invariance, $\dim_{\mathbb{F}_q}U = 1$ implies there is a $\xi \in \F_q$ such that $f(x+1) = \xi \cdot f(x)$ for all $x$. Therefore, $f(x) = \xi^x$ and $U = \{ x \mapsto \alpha \cdot \xi^x : \alpha \in \F_q \}$. As $\xi^x \neq 0$ for all $x$ it now follows immediately that indeed $\mathcal{Z}_{U,\{ 0 \}} = \Z_p$ in this case.
\end{proof}

Now the proof of \cref{pro:orbits_of_stabilizer_pq} is almost immediate:

\begin{proof}[Proof of \cref{pro:orbits_of_stabilizer_pq}]
    Consider the cycle set $X = \Z_p \times_{\Gamma} \Z_q$ where $\Gamma$ is nonconstant. Then $H = \left\langle \Gamma_x \right\rangle \leq \Z_q^{\Z_p} \cong \mathbb{F}_q^{\Z_p}$, where $\Gamma_x(y) = \Gamma^0(y-x)$, can be considered a $\Z_p$-invariant subspace of $\mathbb{F}_q^{\Z_p}$. If $\Gamma_0 = \alpha \cdot \chi$ for some character $\chi: \Z_p \to \Z_q^{\ast}$ and $\alpha \neq 0$, then $\Gamma_x = \chi(-x) \cdot \Gamma^0$, which implies $\dim_{\F_q}H = 1$. By \cref{lem:zeros_of_invariant_subspaces}, $\mathcal{Z}_{H,\{ 0\}} = \Z_p$ which implies by \cref{lem:zeros_of_stabilizer} that the orbits of $\Pi_1(X,x_0)$ on $X$ are given by all singletons $\{ (x,a) \}$.

    If $\Gamma^0$ is \emph{not} proportional to a character, then $\dim_{\mathbb{F}_q}H \geq 2$ and \cref{lem:zeros_of_invariant_subspaces} implies that $\mathcal{Z}_{H,\{ 0\}} = \{ 0 \}$. By \cref{lem:zeros_of_stabilizer}, the orbits of $\Pi_1(X,x_0)$ on $X$ are the singletons $\{ (0,a) \}$ ($a \in \Z_q$) and the sets $\{ x\} \times \Z_q$ ($x \in \Z_p^{\ast}$).
\end{proof}

We are now in the position to provide a description of the indecomposable cycle sets $X$ with $|X|= pqr$ and $\mpl(X)=3$. Recall that we assume $|X^{(1)}|=pq$ and $|X^{(2)}|=p$. By \cref{thm:classification_pq}, we can write $X^{(1)} = \Z_p \times_{\Gamma} \Z_q$ for some nonconstant $\Gamma: \Z_p \times \Z_p \to \Z_q$.

We begin with the case where $\Gamma^0 = \Gamma(0,-)$ is proportional to a character. \cref{pro:multiples_of_phi_give_isomorphic_extensions} shows that scalings of $\Gamma^0$ do not affect the isomorphism type $X^{(1)}$, so we may assume that $\Gamma^0$ is itself a nontrivial character, i.e. $\Gamma^0(x) = \xi^x$ for some $\xi \in \Z_q^{\ast}$ with $\xi^p = 1$. The extension of $\Gamma^0$ to a $\G(X^{(1)})$-invariant map is clearly given by $\Gamma(x,y) = \xi^{y-x}$.

\begin{thm} \label{thm:pqr_with_uniconnected_retraction}
    Suppose that $X$ is an indecomposable cycle set with $|X|=pqr$ and let $X^{(1)} = \Z_p \times_{\Gamma} \Z_q$ with $\Gamma(x,y) = \xi^{y-x}$ for some $\xi \in \Z_q^{\ast}$ such that $\xi^p = 1$. Then $X \cong \Z_p \times \Z_q \times \Z_r$ with the binary operation
    \begin{equation} \label{eq:mp3_with_uniconnected_retraction}
    (x,a,s) \ast (y,b,t) = (y+1,b+\xi^{y-x}, t + \Phi^0(y-x, b - \xi^{y-x}a))
    \end{equation}
    for some nonconstant $\Phi^0: \Z_p \times \Z_q \to \Z_r$. On the other hand, each nonconstant map $\Phi^0: \Z_p \times \Z_q \to \Z_r$ defines an indecomposable cycle set by means of \cref{eq:mp3_with_uniconnected_retraction}.
\end{thm}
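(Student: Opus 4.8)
The plan is to realise $X$ as a parallel extension of its retraction and then make the defining cocycle explicit. Since $|X| = pqr$ is squarefree, \cref{cor:squarefree_cyclesets_are_coprime_extensions} turns $\ret\colon X \twoheadrightarrow X^{(1)}$ into a coprime extension of indecomposable cycle sets with fibres of size $r$. Because $\Gamma^0(x) = \xi^{x}$ is a nontrivial character of $\Z_p$, \cref{pro:orbits_of_stabilizer_pq}(1) shows that every orbit of $\Pi_1(X^{(1)},x_0)$ is a singleton, i.e. $\Pi_1(X^{(1)},x_0) = 1$, so $X^{(1)}$ is uniconnected. \cref{cor:coprime_extensions_of_a_uniconnected_cycle_set} then yields an equivalence $X \cong X^{(1)} \times_{\Psi} A$ with $A$ abelian of order $r$; as $r$ is prime, $A \cong \Z_r$, and since $X^{(1)} = \Z_p \times \Z_q$ as a set we get $X \cong \Z_p \times \Z_q \times \Z_r$ as a set.

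It remains to rewrite the parallel-extension formula \cref{eq:parallel_extensions_expressed_by_gamma0} as \cref{eq:mp3_with_uniconnected_retraction}. Fixing the base point $x_0 = (0,0)$ and setting $\Phi^0 = \Psi^0 = \Psi(x_0,\cdot)$, I would first make the regular action of $\G(X^{(1)})$ on $X^{(1)}$ explicit. By \cref{cor:generating_kernel_of_parallel_extension}, $\G(X^{(1)})$ is generated by the shifts $(y,b) \mapsto (y+c,b)$ coming from $\G(X^{(1)})_{\pi} \cong \G(\Z_p)$ together with the kernel $K = \langle \Gamma_x \rangle$; since $\Gamma_x = \xi^{-x}\Gamma_0$, the group $K$ is one-dimensional and acts by $(y,b) \mapsto (y, b + \alpha\xi^{y})$ with $\alpha \in \Z_q$. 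A short computation then identifies the unique $g \in \G(X^{(1)})$ with $\lambda_g(0,0) = (x,a)$ as the map $(y,b) \mapsto (y+x,\, b + a\xi^{y})$, so that $\lambda_g^{-1}(y,b) = (y-x,\, b - \xi^{y-x}a)$. Feeding this, and the operation $(x,a)\ast(y,b) = (y+1, b + \xi^{y-x})$ on $X^{(1)}$ from \cref{eq:classification_mp2}, into \cref{eq:parallel_extensions_expressed_by_gamma0} reproduces \cref{eq:mp3_with_uniconnected_retraction} with $\Phi^0$ in place of $\Psi^0$.

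For the remaining two claims I would invoke \cref{cor:indecomposability_criterion}, which applies since $\gcd(|\G(X^{(1)})|, r) = \gcd(pq,r) = 1$. In the forward direction indecomposability of $X$ forces $\Z_r = \langle \Phi^0(w) : w \in X^{(1)} \rangle$, hence $\Phi^0 \neq 0$; and were $\Phi^0$ constant, inspection of \cref{eq:mp3_with_uniconnected_retraction} would make $\sigma_{(x,a,s)}$ independent of $a$, so $X$ would retract onto a set of size at most $p$, contradicting $|X^{(1)}| = pq$ --- hence $\Phi^0$ is nonconstant. Conversely, for any nonconstant $\Phi^0$ the formula \cref{eq:mp3_with_uniconnected_retraction} is a parallel extension of the uniconnected cycle set $X^{(1)}$ (so the equivariance demanded of $\Psi^0$ is vacuous and we obtain a genuine cycle set); nonconstancy yields a nonzero value of $\Phi^0$, whence $\langle \Phi^0(w) \rangle = \Z_r$ because $r$ is prime, and \cref{cor:indecomposability_criterion} gives indecomposability. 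The main obstacle is the explicit determination of the regular action: one must pin down the nonabelian group $\G(X^{(1)}) \cong \Z_q \rtimes \Z_p$ and its element carrying the base point to $(x,a)$ precisely enough that the argument of $\Phi^0$ comes out as $(y-x,\, b - \xi^{y-x}a)$ and not some conjugate variant.
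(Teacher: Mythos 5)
Your proposal is correct and follows essentially the same route as the paper's proof: uniconnectedness of $X^{(1)}$ via \cref{pro:orbits_of_stabilizer_pq}, the description of $\G(X^{(1)})$ by the affine maps $(y,b)\mapsto(y+d,\,b+k\xi^{y})$, the identification of the unique $g$ with $\lambda_g(0,0)=(x,a)$ and hence $\lambda_g^{-1}(y,b)=(y-x,\,b-\xi^{y-x}a)$, and \cref{cor:indecomposability_criterion} for indecomposability. Your extra remark that a constant $\Phi^0$ would force $\sigma_{(x,a,s)}$ to depend only on $x$ and so collapse the retraction below size $pq$ is a correct justification of the nonconstancy claim, which the paper leaves implicit.
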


Note that we do not discuss the issue when the cycle set described by \cref{eq:mp3_with_uniconnected_retraction} is exactly of multipermutation level $3$. This comment applies to all subsequent classifications, as well!

\begin{proof}
    By \cref{pro:orbits_of_stabilizer_pq}, the cycle set $X^{(1)}$ is uniconnected. It is clear that each element of $\G(X^{(1)})$ is of the form 
    \[
    \pi_{(d,k)}: \Z_p \times \Z_q \to \Z_p \times \Z_q ; \quad (x,a) \mapsto (x+d,a + k \cdot \xi^x)
    \]
    for $d \in \Z_p, k \in \Z_q$. As $|\G(X^{(1)})| = pq$, we have $\pi_{(d,k)} \in \G(X^{(1)})$ for all $d \in \Z_p$, $k \in \Z_q$.

    By \cref{cor:coprime_extensions_of_a_uniconnected_cycle_set}, there is an isomorphism $X \cong X^{(1)} \times_{\Phi} \Z_r$. Using \cref{eq:parallel_extensions_expressed_by_gamma0}, we use the map $\Phi^0 = \Phi((0,0),-): X^{(1)} \to \Z_r$ to express the multiplication on $X^{(1)} \times_{\Phi} \Z_r$ by
    \begin{equation*}
    ((x,a),s) \ast ((y,b),t) = ((x,a) \ast (y,b), t + \Phi^0(\lambda_g^{-1}(y,b))) \textnormal{ where } \lambda_g(0,0) = (x,a). \tag{$\ast$}
    \end{equation*}
    Given $(x,a),(y,b) \in \Z_p \times \Z_q$, we see that $\pi_{(x,a)}(0,0)=(x,a)$. There is a $g \in \G(X^{(1)})$ such that $\lambda_g = \pi_{(x,a)}$. This $g$ satisfies $\lambda_g^{-1}(y,b) = (y-x,b- \xi^{y-x}a)$. Plugging this into $(\ast)$ results in \cref{eq:mp3_with_uniconnected_retraction}.

    It is obvious that the construction results in a cycle set as \cref{eq:mp3_with_uniconnected_retraction} expresses the operation in a parallel extension in terms of $\Phi^0$. This cycle set is seen to be indecomposable by an application of \cref{cor:indecomposability_criterion}.
\end{proof}

We now consider the case when $X^{(1)}$ is \emph{not} uniconnected, i.e. $X^{(1)} = \Z_p \times_{\Gamma} \Z_q$ where the map $\Gamma^0: \Z_p \to \Z_q$ is \emph{not} proportional to a character.

To this end, we begin by constructing the $X^{(1)}$-graded $\G(X^{(1)})$-modules $A$ with $|A_x| = r$, that is $A_x \cong \Z_r$ for all $x \in X^{(1)}$. The experienced reader may have noticed that if a set $Y$ is acted upon transitively by a group $G$, then each $Y$-graded $G$-module $A = \bigoplus_{x \in Y}A_x$ is isomorphic to the induced module $\Ind_{G_0}^GA_{x_0}$ where $G_0$ is the stabilizer of some element $x_0 \in Y$, as the $A_x$ form a \emph{system of imprimitivity} in the sense of \cite[§50.1]{CR_Repr}. But even without this knowledge, it is not hard to see that $A$ is already determined up to isomorphism by the $G_0$-module $A_{x_0}$.

Considering $X^{(1)} = \Z_p \times_{\Gamma} \Z_q$, we want to find the $X^{(1)}$-graded $\G(X^{(1)})$-modules $A$ with $|A_x| = \Z_r$ for all $x \in X^{(1)}$. We fix $(0,0) \in X^{(1)}$ as our base point. Let $K = \ker(\G(X^{(1)}) \to \G(X^{(2)}))$. It turns out that the stabilizer
\[
\G(X^{(1)})_0 = \Pi_1(X^{(1)},(0,0)) \leq K.
\]
This can be seen as follows: if $\Bar{g}$ denotes the image of $g \in \G(X^{(1)})_0$ under the homomorphism $\G(\ret): \G(X^{(1)}) \to \G(X^{(2)})$, we observe that $\lambda_{\Bar{g}}(0) = 0$ which implies $\Bar{g} = 0$ as $X^{(2)}$ is uniconnected.

We use \cref{cor:generating_kernel_of_parallel_extension} to describe $K$ as the trivial brace over the abelian group
\[
K = \left\langle \Gamma_x : x \in \Z_p \right\rangle \leq \Z_q^{\Z_p}
\]
where $\Gamma_x: \Z_p \to \Z_q$ is given by $\Gamma_x(y) = \Gamma(x,y) = \Gamma^0(y-x)$. We therefore identify $K$ - and $\G(X^{(1)})_0$, in particular - with a subgroup of $\Z_q^{\Z_p}$ and write its elements by functions $f: \Z_q \to \Z_p$ that act on $X^{(1)}$ as $\lambda_f(x,a) = (x,a+f(x))$. It is now easy to determine the $\G(X^{(1)})_0$-module structures on $\Z_r$ - each such structure is given by a character $\Bar{\chi}: \G(X^{(1)})_0 \to \Z_r^{\ast}$, and as the group $\Z_q^{\Z_p}$ and its subgroup $K$ are elementary abelian, $\G(X^{(1)})_0$ is a summand thereof, therefore each character on $\G(X^{(1)})_0$ is the restriction of a character on $K$ resp. $\Z_q^{\Z_p}$. Identifying $\G(X^{(1)})_0$ with an (additive) subgroup of $\Z_q^{\Z_p}$, it follows that we can write
\[
\Bar{\chi} = \chi|_{\G(X^{(1)})_0}, \textnormal{where } \chi: K \to \Z_r ; \quad f \mapsto \prod_{x \in \Z_p^{\ast}} \xi_x^{f(x)}.
\]
Where the elements $\xi_x \in \Z_r$ ($x \in \Z_p$) satisfy $\xi_x^q = 1$. Note that we exclude $x = 0$ because $f(0)=0$, due to $f$ stabilizing $(0,0)$.

Therefore, $\G(X^{(1)})_0$ acts on $\Z_r$ via $f \cdot z = z \cdot \prod_{x \in \Z_p^{\ast}} \xi_x^{f(x)}$. The unique extension to an $X^{(1)}$-graded $\G(X^{(1)})$-group $A$ with $A_{(0,0)} = \Z_r$ is given by $A_{(x,a)} = \Z_r$ for all $(x,a) \in X^{(1)}$, where $\G(X^{(1)})$ acts by

\begin{align}
    \rho_{(d,f)}: A_{(x,a)} & \to A_{(x+d,a + f(x))} \\
    z & \mapsto z \cdot \prod_{y \in \Z_p^{\ast}} \xi_y^{f(y+x)},  \label{eq:graded_X_modules}.
\end{align}
Here, $(d,f)$ denotes the element in $\G(X^{(1)}) \leq \Z_p \ltimes \Z_q^{\Z_p}$ that maps $(x,a) \mapsto (x+d,a+f(x))$.

Note that given the character $\Bar{\chi}$ on $\G(X^{(1)})_0$, the extension to an $X$-graded $\G(X)$-module is unique up to isomorphism as $\G(X^{(1)})$ acts transitively on $X^{(1)}$, so we only have to check that this is indeed an $X^{(1)}$-graded $\G(X^{(1)})$-module: we consider the action of $\rho_{(e,g)}\rho_{(d,f)}$ on an element $z \in A_{(x,a)}$:

\begin{allowdisplaybreaks}
\begin{align*}
    (\rho_{(e,g)}\rho_{(d,f)} )(z) & = \rho_{(e,g)}\left( z \cdot \prod_{y \in \Z_p^{\ast}} \xi_y^{f(y+x)} \right) \\
    & = z \cdot \prod_{y \in \Z_p^{\ast}} \xi_y^{g(y+x+d)} \cdot \prod_{y \in \Z_p^{\ast}} \xi_y^{f(y+x)} \\
    & = z \cdot \prod_{y \in \Z_p^{\ast}} \xi_y^{g(y+x+d)+f(y+x)} \\
    & = \rho_{(e,g)(d,f)}(z).
\end{align*}
\end{allowdisplaybreaks}

It will occasionally be practical to take shifts into account. For that sake, we define for $x \in \Z_p$ the characters
\begin{equation} \label{eq:def_of_chi_x}
\chi_x: K \to \Z_r; \quad f \mapsto \prod_{y \in \Z_p^{\ast}} \xi_y^{f(y+x)}.
\end{equation}
Note that $\chi = \chi_0$. With this definition \cref{eq:graded_X_modules} can be rewritten for $z \in A_{(x,a)}$ as
\begin{equation} \label{eq:action_of_chi_x}
    \rho_{(d,f)}(z) = \chi_x(f) \cdot z \in A_{(x+d,a+f(x))}.
\end{equation}

We fix the $X^{(1)}$-graded $\G(X^{(1)})$-module $A = \Z_r^{X^{(1)}}$ where $\G(X^{(1)})$ acts as in \cref{eq:graded_X_modules}. We have to find conditions for a function $\Phi^0: \Z_p \times \Z_q \to A$ to satisfy the $\G(X^{(1)})_0$-equivariance given by \cref{eq:equivariance_for_pi1}. Rewritten in terms of the module structure given by \cref{eq:action_of_chi_x}, this translates to
\begin{equation} \label{eq:equivariance_over_zpxzq}
    \Phi^0(x,a+f(x)) = \Phi^0(x,a) \cdot \chi_x(f)
\end{equation}
for $f \in \G(X^{(1)})_0$. Note that $f \in \G(X^{(1)})_0$ implies $f(0)=0$. Furthermore, we observe that the existence of an $f \in \G(X^{(1)})_0$ with $\chi_x(f) \neq 1$ and $f(x)=0$ forces $\Phi^0(x,a) = 0$. On the other hand, suppose that $x \in \Z_p^{\ast}$. If $\chi_x(f) = 1$ for all $f \in \G(X^{(1)})_0$ with $f(x)=0$, then given the value $\Phi(x,0)$, the values $\Phi^0(x,a)$ can easily be reconstructed as
\begin{equation}
\Phi^0(x,a) = \Phi^0(x,0) \cdot \chi_x(f)
\end{equation}
where $f \in \G(X^{(1)})_0$ is chosen such that $f(x) = a$. Observe that such an $f$ exists because of \cref{pro:orbits_of_stabilizer_pq} and the assumption that $x \neq 0$, and independence from the choice of $f$ follows from the assumption that $\chi_x(f) = 1$ for all $f \in \G(X^{(1)})_0$ with $f(x)=0$. We define the set
\begin{equation} \label{eq:definition_n_chi}
\mathcal{N}_{\chi} = \{ x \in \Z_p: \forall f \in \G(X^{(1)})_0: f(x)=0 \Rightarrow \chi_x(f) = 1 \}.
\end{equation}

\medskip

We now investigate two cases:

\noindent\textbf{Case 1: $0 \in \mathcal{N}_{\chi}$.}

This implies that $\Bar{\chi} = \chi|_{\G(X^{(1)})_0}$ is the trivial character. Therefore, the module $A$ is isomorphic to the permutation module $\Z_r^{X^{(1)}}$. In this case, we have a parallel extension $X^{(1)} \times_{\Phi} \Z_r \cong X$.

We reconstruct $\Phi$ from the restriction $\Phi^0 = \Phi((0,0),-): X^{(1)} \to \Z_r$ which is $\Pi(X^{(1)},(0,0))$-invariant in the sense of \cref{eq:equivariance_for_parallel_extensions}. By \cref{pro:orbits_of_stabilizer_pq}, this is the same as saying that $\Phi^0$ is constant on $\{ x \} \times \Z_q$ for $x \in \Z_p^{\ast}$. Therefore, there are functions $\Phi^0_1: \Z_q \to \Z_r$ and $\Phi^0_2: \Z_p^{\ast} \to \Z_r$ such that
\[
\Phi^0(x,a) = \begin{cases}
    \Phi^0_1(a) & x = 0 \\
    \Phi^0_2(x) & x \neq 0.
\end{cases}
\]
It is quickly checked that the extension of $\Phi^0$ to a $\G(X^{(1)})$-invariant map $\Phi: X^{(1)} \times X^{(1)} \to \Z_r$ is given by
\[
\Phi((x,a),(y,b)) = \begin{cases}
    \Phi^0_1(b-a) & x = y \\
    \Phi^0_2(y-x) & x \neq y.
\end{cases}
\]
Plugging everything into \cref{eq:cycle_operation_parallel_extension}, the resulting cycle set is then described as $X = \Z_p \times \Z_q \times \Z_r$ with the operation
\[
(x,a,s) \ast (y,b,t) = \begin{cases}
    (y+1,b+\Gamma^0(y-x), t+ \Phi^0_1(b-a)) & x = y, \\
    (y+1,b+\Gamma^0(y-x), t+ \Phi^0_2(x-y)) & x \neq y.
\end{cases}
\]
By \cref{cor:indecomposability_criterion}, $X$ is indecomposable if and only if $\Phi^0_1$ or $\Phi^0_2$ is nonconstant.

\begin{rem}
    Assume that we are given another prime $s \not\in \{p,q,r \}$, then there is an obvious way to extend this construction: let $\Omega_1^0: \Z_r \to \Z_s$, $\Omega_2^0: \Z_q^{\ast} \to \Z_s$, $\Omega_3^0: \Z_p^{\ast} \to \Z_s$, then the cycle set structure constructed on $X = \Z_p \times \Z_q \times \Z_r$ can be extended to give one on $X \times \Z_s$ by means of
    \[
    (x,a,s,v) \ast (y,b,t,w) = \begin{cases}
    (y+1,b+\Gamma^0(y-x), t+ \Phi^0_1(b-a),w +\Omega_1^0(t-s)) & x = y;\ a = b \\
    (y+1,b+\Gamma^0(y-x), t+ \Phi^0_1(b-a),w + \Omega_2^0(b-a)) & x = y; a \neq b, \\
    (y+1,b+\Gamma^0(y-x), t+ \Phi^0_2(y-x),w + \Omega_3^0(y-x)) & x \neq y.
\end{cases}
    \]
    It is obvious how to iterate this construction.
\end{rem}

\medskip

\noindent\textbf{Case 2: $0 \not\in \mathcal{N}_{\chi}$.}

For $x \in \mathcal{N}_{\chi}$, we now define a map $\phi_x^0: \Z_p \times \Z_q \to A$ by
\begin{align*}
    \phi_x^0(y,a) & \in A_{y,a}, \\
    \textnormal{for } f \in \G(X^{(1)})_0: \quad \phi_x^0(y,f(x)) & = \begin{cases}
        0 & x \neq y, \\
        \chi_x(f) & x = y.
    \end{cases}
\end{align*}
It is easy to see that $\phi_x^0$ is well-defined and that $\phi_x^0(y,a+f(y)) = f \cdot \phi_x^0(y,a)$ for all $f \in \G(X^{(1)})_0$, $(y,a) \in \Z_p \times \Z_q$, and any $\G(X^{(1)})_0$-equivariant $\Phi^0: \Z_p \times \Z_q \to A$ can be assembled as
\begin{equation} \label{eq:decompose_phi0}
\Phi^0 = \sum_{x \in \mathcal{N}_{\chi}} c_x \phi_x^0
\end{equation}
with $c_x \in \Z_r$ ($x \in \mathcal{N}_{\chi}$). Note that it is necessary and sufficient for $X \otimes_{\Phi}A$ to be indecomposable that $c_x \neq 0$ for some $x \in \mathcal{N}_{\chi}$.

We are left with expressing the operation on $X$ in terms of $\Phi^0$. We make the assumption that there is an $x_0 \in \Z_p$ with $\Gamma_{x_0}(0) = 1$\footnote{By \cref{pro:multiples_of_phi_give_isomorphic_extensions}, this assumption does not pose any restriction!}. Recall that $\Gamma_x(y) = \Gamma(x,y) = \Gamma^0(y-x)$.

We now define for $d \in \Z_p$, $k \in \Z_q$ the permutations
    \[
    \pi_{(d,k)}: X^{(1)} \to X^{(1)} ; \quad (x,a) \mapsto (x+d, a + k \cdot \Gamma_{x_0}(x)).
    \]
    By \cref{cor:generating_kernel_of_parallel_extension}, $\pi_{(d,k)} \in \G(X^{(1)})$. We now identify $A^{\sca}$ with $X^{(1)} \times \Z_r$ and, using \cref{eq:twisted_extensions_expressed_by_phi0}, we express the operation on $X = X^{(1)} \times \Z_r$ as
    \begin{equation}
        ((x,a),s) \ast ((y,b),t) = ((x,a)\ast(y,b), \underbrace{\lambda_{(x,a)}^{-1} \cdot (t + g \cdot \Phi^0(\lambda_g^{-1}(y,b)))}_{=: F(x,a,y,b,t)}) \quad \textnormal{ where } \lambda_g(0,0) = (x,a)   \tag{$\ast$}
    \end{equation}
    It turns out that $\pi_{(x,a)}(0,0) = (x,a)$, so let $g \in \G(X^{(1)})$ be such that $\lambda_g = \pi_{(x,a)}$. With this $g$, we furthermore get $\lambda_g^{-1}(y,b) = (x-y,b - a \cdot \Gamma_{x_0}(y-x))$.
    Note that $\lambda_{(x,a)}^{-1}(y,b) = (y+1,b + \Gamma_x(y))$. Using the description \cref{eq:graded_X_modules} for the module structure, we can therefore calculate
    \begin{align*}
        F(x,a,y,b,t) & = \lambda_{(x,a)}^{-1} \cdot \underbrace{(t + g \cdot \overbrace{\Phi^0(\lambda_g^{-1}(y,b))}^{\in A_{(x-y,b-a \cdot \Gamma^0(y-x-x_0))}} )}_{\in A_{(y,b)}} \\
        & = \chi_y(\Gamma_x) \cdot \left( t + \chi_{x-y}(a \cdot \Gamma_{x_0}) \cdot \Phi^0(x-y,b - a \cdot \Gamma_{x_0}(y-x)) \right).
    \end{align*}

We can now summarize our findings! In the following we write, as usual, $K = \genrel{\Gamma_x}{x \in \Z_p}$. Furthermore, we identify $\G(X^{(1)})_0$ with $K_0 = \{f \in K: f(0)=0 \}$.

\begin{thm} \label{thm:pqr_with_non_uniconnected_retraction}
 Suppose $X$ is an indecomposable cycle set with $\mpl(X)=3$, $|X|=pqr$ and $X^{(1)} = \Z_p \times_{\Gamma} \Z_q$, where $\Gamma^0$ is nonzero but \emph{not} proportional to a character. Then
 \begin{enumerate}
     \item either $X \cong \Z_p \times \Z_q \times \Z_r$, with the operation
     \[
     (x,a,s) \ast (y,b,t) = \begin{cases}
    (y+1,b+\Gamma^0(y-x), t+ \Phi^0_1(b-a)) & x = y, \\
    (y+1,b+\Gamma^0(y-x), t+ \Phi^0_2(x-y)) & x \neq y,
\end{cases}
     \]
     where $\Phi_1^0: \Z_q \to \Z_r$ and $\Phi^0_2: \Z_p^{\ast} \to \Z_r$ are maps that are not both identically zero.
     \item or $X \cong \Z_p \times \Z_q \times \Z_r$, with the operation
     \[
     (x,a,s) \ast (y,b,t) = \left(y+1,b + \Gamma^0(y-x),  \chi_y(\Gamma_x) \cdot \left( t + \chi_{x-y}(a \cdot \Gamma_{x_0}) \cdot \Phi^0(x-y,b - a \cdot \Gamma_{x_0}(y-x)) \right) \right).
     \]
     Here, $\Gamma^0(-x_0)=1$ and $\chi: K \to \Z_r^{\ast}$ is a character whose restriction to $K_0$ is not identically $1$, and with
     \[
     \mathcal{N}_{\chi} = \{ x \in \Z_p: \forall f \in K_0: f(x)=0 \Rightarrow \chi_x(f) = 1 \},
     \]
     we have $\Phi^0 = \sum_{x \in \mathcal{N}_{\chi}}c_x \phi_x^0$, with $c_x \in \Z_r$, not all $0$, where
\begin{align*}
    \phi_x^0: \Z_p \times \Z_q & \to \Z_r, \\
    \phi_x^0(y,a) & = \begin{cases}
        0 & x \neq y, \\
        \chi_x(f) & x = y, \textnormal{where } f \in K_0 \textnormal{ satisfies } f(x)=a.
    \end{cases}
\end{align*}
 \end{enumerate}
 In both cases, the conditions for $\Phi_1^0,\Phi_2^0$ resp. $\Phi^0$ are sufficient to define an indecomposable cycle set (that might be of multipermutation level $< 3$).
\end{thm}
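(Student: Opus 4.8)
The plan is to assemble the statement from the running analysis of the two cases preceding it: the theorem is essentially a clean repackaging of that discussion, so the proof should consist of recording how the pieces fit together. First I would invoke \cref{cor:squarefree_cyclesets_are_coprime_extensions}: since $|X|=pqr$ is squarefree, $\ret\colon X \twoheadrightarrow X^{(1)}$ is a coprime extension of indecomposable cycle sets, with fibers of size $r = |X|/|X^{(1)}|$. By \cref{thm:coprime_extensions_are_twisted_extensions}, $X$ is therefore equivalent to a twisted extension $X^{(1)} \otimes_{\Phi} A$ for some $X^{(1)}$-graded $\G(X^{(1)})$-module $A$ with $A_x \cong \Z_r$ for every $x$. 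Everything then reduces to classifying the admissible pairs $(A,\Phi)$ and reading off the resulting operation.

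Next I would recall the classification of the modules $A$. Fixing the base point $(0,0)$ and using that $\Gamma^0$ is nonzero but not proportional to a character, \cref{pro:orbits_of_stabilizer_pq} shows that $X^{(1)}$ is \emph{not} uniconnected, so the stabilizer $\G(X^{(1)})_0 = \Pi_1(X^{(1)},(0,0))$ is nontrivial and, via the functional description of $K = \ker(\G(X^{(1)})\to\G(X^{(2)}))$ from \cref{cor:generating_kernel_of_parallel_extension}, sits inside the elementary abelian group $K \leq \Z_q^{\Z_p}$. Each $A$ with fibers $\Z_r$ is determined up to isomorphism by the induced $\G(X^{(1)})_0$-module structure on $\Z_r$, that is, by a character $\Bar\chi$, which extends (because $K$ is elementary abelian) to a character $\chi$ on $K$ of the shape \cref{eq:def_of_chi_x}; the grading action is then \cref{eq:graded_X_modules}. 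The dichotomy of the theorem is precisely the dichotomy $0 \in \mathcal{N}_{\chi}$ versus $0 \notin \mathcal{N}_{\chi}$ defined in \cref{eq:definition_n_chi}, equivalently $\Bar\chi$ trivial versus $\Bar\chi$ nontrivial.

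In the first case ($\Bar\chi$ trivial, conclusion (1)), $A$ is the permutation module, so $X$ is a parallel extension $X^{(1)} \times_{\Phi} \Z_r$; the $\Pi_1(X^{(1)},(0,0))$-invariance of $\Phi^0$, combined with the orbit description of \cref{pro:orbits_of_stabilizer_pq}, forces $\Phi^0$ to be an arbitrary function $\Phi_1^0\colon\Z_q\to\Z_r$ on the fixed column $\{0\}\times\Z_q$ and to be constant on each block $\{x\}\times\Z_q$ with $x\neq 0$, recorded by $\Phi_2^0\colon\Z_p^{\ast}\to\Z_r$; substituting into \cref{eq:cycle_operation_parallel_extension} yields the two-case formula. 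In the second case ($\Bar\chi$ nontrivial, conclusion (2)), the equivariance constraint \cref{eq:equivariance_over_zpxzq} forces $\Phi^0$ to vanish off $\mathcal{N}_{\chi}$ (and on the column over $0$) and to be a $\Z_r$-combination $\Phi^0 = \sum_{x\in\mathcal{N}_{\chi}} c_x\phi_x^0$ of the elementary solutions, as in \cref{eq:decompose_phi0}; substituting into \cref{eq:twisted_extensions_expressed_by_phi0} and simplifying with \cref{eq:action_of_chi_x} gives the stated operation. In both cases the indecomposability criterion (\cref{cor:indecomposability_criterion} for case (1), \cref{thm:indecomposability_criterion} for case (2)) demands that $\langle \Phi(x,x_0) : x\in X^{(1)}\rangle = \Z_r$; since $r$ is prime this is equivalent to $\Phi$ not being identically zero, i.e. to ``$\Phi_1^0,\Phi_2^0$ not both zero'' respectively ``not all $c_x$ zero''. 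The converse is then immediate: any such data defines a genuine twisted extension (hence a cycle set), whose indecomposability is guaranteed by the same criterion.

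I expect the main obstacle to be the \emph{completeness} of the parametrization rather than any single computation. One must check that the character classification of the fiber-$\Z_r$ modules is exhaustive (resting on $K$ being elementary abelian, so that every character of $\G(X^{(1)})_0$ extends to one of the form \cref{eq:def_of_chi_x}), and, in case (2), that \emph{every} equivariant $\Phi^0$ is captured by the combinations $\sum c_x\phi_x^0$ — in particular that $\Phi^0$ is forced to vanish off $\mathcal{N}_{\chi}$ and on the $x=0$ column, while its values on $\mathcal{N}_{\chi}$ are unconstrained. The bookkeeping of the shift characters $\chi_x$ under the $\G(X^{(1)})$-action, needed to confirm the explicit operation in (2), is the most error-prone step, but it is a direct calculation once the module structure \cref{eq:graded_X_modules} is fixed.
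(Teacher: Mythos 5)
Your proposal is correct and takes essentially the same approach as the paper, which in fact proves this theorem by exactly the running discussion preceding it: the coprime-extension reduction via \cref{cor:squarefree_cyclesets_are_coprime_extensions} and \cref{thm:coprime_extensions_are_twisted_extensions}, the classification of the fiber-$\Z_r$ modules by characters of $\G(X^{(1)})_0$ extended to $K$, the dichotomy $0\in\mathcal{N}_{\chi}$ versus $0\notin\mathcal{N}_{\chi}$, and the indecomposability criterion reading, for prime $r$, as ``not identically zero.'' The only detail you leave implicit is the normalization $\Gamma_{x_0}(0)=1$ (harmless by \cref{pro:multiples_of_phi_give_isomorphic_extensions}), which the paper uses so that $\pi_{(x,a)}(0,0)=(x,a)$ in the explicit computation for case (2).
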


Note that the second case of the theorem can only occur when $q \mid r-1$.

The theorem suggests that it is very unlikely to achieve a general explicit description of cycle sets of squarefree order whose size has more than three prime factors.

\section{A comment on Lebed-Vendramin cohomology}

We recapitulate the concept of an \emph{abelian extension} of cycle sets that has been investigated by Lebed, Vendramin \cite{Lebed_Vendramin_Homology}:

\begin{defn}[{\cite[Definition 9.9]{Lebed_Vendramin_Homology}}]
    Let $p: Y \twoheadrightarrow X$ be a (constant) extension of cycle sets and let $B$ be an abelian group with an action $B \times Y \to Y; (a,y) \mapsto y + a$ that restricts to regular actions on the fibers $p^{-1}(x)$ ($x \in X$). Then the couple $(p:Y \to X, B)$ is called an \emph{abelian extension of $X$ by $B$} if
    \[
    y \ast (z + a) = (y \ast z) + a
    \]
    holds for all $y,z \in Y$, $a \in B$ (note that the condition $(y+a)\ast z = y \ast z$ in the original definition is contained in the assumption that $p$ is a constant extension).
\end{defn}

They prove that each extension of a cycle set $X$ by $B$ can be parametrized by a \emph{$2$-cocycle} $\Phi: X \times X \to B$ that satisfies
\[
\Phi(x,z) + \Phi(x \ast y,x \ast z) =  \Phi(y,z) + \Phi(y \ast x, y \ast z)
\]
for all $x,y,z \in X$. Note that parallel extensions are an instance of this construction!

Given a $2$-cocycle $\Phi$, one can recover, up to equivalence of extensions, $Y$ as $X \times B$ with the operation
\[
(x,a) \ast (y,b) = (x \ast y, b + \Phi(x,y)),
\]
together with the projection $Y \twoheadrightarrow X$; $(x,a) \mapsto x$. Denote this cycle set as $X \times_{\Phi} B$. Furthermore, $2$-cocycles $\Phi,\Gamma: X \times X \to B$ define equivalent extensions\footnote{Here, an equivalence of extensions furthermore respects the actions of $B$} if and only if there is a map $\gamma: X \to B$ such that
\begin{equation} \label{eq:lebed_vendramin_cohomologous}
\Phi(x,y) - \Gamma(x,y) = \gamma(x \ast y) - \gamma(y).
\end{equation}

In view of the proof of \cref{thm:coprime_extensions_are_twisted_extensions}, it is natural to generalize the concept as follows:

\begin{defn}
    Let $X$ be a cycle set and $A$ an $X$-graded $\G(X)$-module. Then a (constant) extension $p: Y \twoheadrightarrow X$ is called an \emph{abelian extension of $X$ by $A$} if there is an action of $A$ on $Y$ given by $(a,y) \mapsto y+a$ such that:
    \begin{enumerate}
        \item for $x \in X$, the component $A_x \leq A$ acts regularly on the fiber $p^{-1}(x)$,
        \item for $x,y \in X$ with $y \neq x$, the component $A_y \leq A$ has trivial action on $p^{-1}(x)$,
        \item for $x \in X$, $y \in Y$, $a \in A$, the action satisfies
        \[
        x \ast (y + a) = x \ast y + \lambda_x^{-1} \cdot a.
        \]
    \end{enumerate}
\end{defn}

Paralleling the proofs of Lebed and Vendramin, one establishes that an extension of $X$ by $A$ can be represented as $X \otimes_{\Phi} A = A^{\sca}$, together with a binary operation
\begin{equation} \label{eq:general_A_extension}
(x,a) \ast (y,b) = (x \ast y, \lambda_x^{-1} \cdot (b + \Phi(x,y)))
\end{equation}
for some map $\Phi: X \times X \to A$ with $\Phi(x,y) \in A_y$ ($x,y \in X$). \cref{eq:general_A_extension} defines a cycle set if and only if $\Phi$ is a \emph{twisted} $2$-cocycle, meaning that
\begin{equation} \label{eq:twisted_cocycles}
\Phi(x,z) + \lambda_x \cdot \Phi(x \ast y, x \ast z) = \Phi(y,z) + \lambda_y \cdot \Phi(y \ast x, y \ast z).
\end{equation}
Note that if equivariance in form of \cref{eq:phi_equivariance} is satisfied, then $\Phi(x \ast y, x \ast z) = \lambda_x^{-1} \cdot \Phi(y,z)$, and $\Phi(y \ast x, y \ast z) = \lambda_y^{-1} \cdot \Phi(x,z)$, so \cref{eq:twisted_cocycles} is indeed satisfied. Furthermore, $\Phi,\Gamma$ define equivalent extensions by $A$, if there is a $\gamma: X \to A$ with $\gamma(x) \in A_x$ ($x \in X$) such that
\[
\Phi(x,y) - \Gamma(x,y) = c_y - \lambda_x \cdot c_{x \ast y}.
\]
Considering the proof of \cref{thm:coprime_extensions_are_twisted_extensions}, it becomes apparent that an extension $p: Y \to X$ of indecomposable cycle sets can be parametrized as an extension by an $X$-graded $\G(X)$-module if $K = \ker(\G(Y) \twoheadrightarrow \G(X))$ induces regular actions on the fibers $p^{-1}(x)$ ($x \in X$). This condition is satisfied in the case of a coprime extension.

What we have essentially shown is that for an $X$-graded $\G(X)$-module $A$ with $\gcd(|\G(X)|,|A|) = 1$, each twisted $2$-cocycle $\Phi: X \times X \to A$ is cohomologous to a unique equivariant $2$-cocycle. This observation implies the \emph{need} for a development of \emph{twisted} Lebed-Vendramin cohomology in order to get more conceptual proofs of the results in this article.

\bibliography{references}
\bibliographystyle{abbrv}

\end{document}